\documentclass[final,3p]{elsarticle}

\usepackage{lineno,latexsym}
\PassOptionsToPackage{hyphens}{url}\usepackage{hyperref}
\modulolinenumbers[1]

\usepackage{fleqn}
\usepackage{amsmath}
\usepackage{amsthm}
\usepackage{amsfonts}
\usepackage{amssymb}
\usepackage{bm}
\usepackage{epsfig}
\usepackage{makeidx}
\usepackage{relsize}
\usepackage[export]{adjustbox}
\usepackage{graphics}
\usepackage{epstopdf}
\usepackage{subfigure}
\epstopdfsetup{suffix=}
\usepackage{graphicx}
\usepackage{xcolor}
\usepackage[normalem]{ulem}
\usepackage{algorithm}
\usepackage{algpseudocode}
\usepackage{enumerate}
\usepackage{dsfont}
\usepackage{multirow}
\usepackage[title]{appendix}

\usepackage{tikz}

\usepackage{tabularx}
\usepackage{ragged2e}
\newcolumntype{M}[1]{>{\centering\arraybackslash}m{#1}}
\usepackage{tikz}
\usetikzlibrary{plotmarks}
\usetikzlibrary{arrows}
\usetikzlibrary{shapes}
\usepackage{url}

\usepackage{mathtools}
\mathtoolsset{showonlyrefs}

\newcommand{\ie}{i.e.,\ }

\newcommand{\cB}{\mathcal{B}}
\newcommand{\cC}{\mathcal{C}}

\newcommand{\cO}{\mathcal{O}}

\newcommand{\bi}{\mathbf{i}}
\newcommand{\bj}{\mathbf{j}}
\newcommand{\bz}{\mathbf{z}}

\newcommand{\bv}{\mathbf{v}}

\newcommand{\ba}{\mathbf{a}}

\newcommand{\be}{\mathbf{e}}

\newcommand{\bff}{\mathbf{f}}

\newcommand{\beps}{{\boldsymbol{\epsilon}}}
\newcommand{\bl}{{\boldsymbol{\ell}}}

\newcommand{\R}{\mathds{R}}
\newcommand{\N}{\mathds{N}}
\newcommand{\Z}{\mathds{Z}}

\newcommand{\lra}{\longrightarrow}

\newcommand{\circled}[1]{\tikz[baseline=(char.base)]{
    \node[shape=circle,draw,inner sep=2pt] (char) {#1};}}



\newcommand{\mean}{\mathbb{E}}
\DeclareMathOperator*{\var}{\mathbb{V}}
\DeclareMathOperator*{\cov}{cov}



\theoremstyle{plain}
\newtheorem{thm}{Theorem}[section]
\newtheorem{prop}{Proposition}[section]
\newtheorem{lem}{Lemma}[section]
\newtheorem{cor}{Corollary}[section]

\theoremstyle{definition}
\newtheorem{defi}{Definition}[section]


\theoremstyle{remark}
\newtheorem{rmk}{Remark}[section]

\journal{arXiv}

\bibliographystyle{model2-names}\biboptions{authoryear}

\begin{document}

\begin{frontmatter}
	\title{Weighted least squares subdivision schemes \\ for noisy data on triangular meshes}
	
	\author[first]{Costanza Conti}
	\ead{costanza.conti@unifi,it}
	\author[last]{Sergio L\'opez-Ure\~na}
	\ead{sergio.lopez-urena@uv.es}
	\author[last]{Dionisio F. Y\'a\~nez\corref{cor}}
	\ead{Dionisio.Yanez@uv.es}
	\cortext[cor]{Corresponding author}
	\address[first]{Dipartimento di Ingegneria Industriale, Viale Morgagni 40/44, 50134, Universit\`a di Firenze, Italy}
	\address[last]{Departament de Matem\`atiques, Universitat de Val\`encia, Doctor Moliner Street 50, 46100 Burjassot, Valencia, Spain}
	
	\begin{abstract}
		This paper presents and analyses a new family of linear subdivision schemes to refine noisy data given on triangular meshes. The subdivision rules consist of locally fitting and evaluating a weighted least squares approximating first-degree polynomial. This type of rules, applicable to any type of triangular grid, including finite grids or grids containing extraordinary vertices, are geometry-dependent which may result in non-uniform schemes. For these new subdivision schemes, we are able to prove reproduction, approximation order, denoising capabilities and, for some special type of grids, convergence as well.
		Several numerical experiments demonstrate that their performance is similar to advanced local linear regression methods but their subdivision nature makes them suitable for use within a multiresolution context as well as to deal with noisy geometric data as shown with an example.
	\end{abstract}

	\begin{keyword}
		Bivariate subdivision scheme \sep weighted least squares regression \sep noisy data on triangulations
	\end{keyword}
	
\end{frontmatter}


\section{Introduction} \label{sec:intro}
In this paper, we design a novel family of subdivision schemes suitable to approximate bivariate scalar functions from noisy samples. As well known, a subdivision scheme iteratively generates values associated with denser and denser meshes by repeated application of local refinement rules. Whenever a subdivision scheme converges, for any set of initial data it generates a corresponding continuous function.

\smallskip In the univariate case, for data uniformly distributed on the real line, Dyn  et  al.  in \cite{DYNHORMANN} propose linear and symmetric refinement rules based on local polynomial approximation, where the polynomial is determined by a least squares fit to the data. They name the corresponding subdivision schemes \emph{least squares subdivision schemes} and show, by the help of numerical experiments, that they are particularly suited for noisy data and that the schemes' performance are comparable to advanced approximating methods, such as the local linear regression (LLR) method.
Still in the univariate setting, in \cite{MUSTAFA}  refinement rules based on $\ell_1$-optimization (rather than $\ell_2$-optimization) are considered. The resulting subdivision schemes not only mitigate the effect of noise but also the presence of \emph{outliers} on the limit function, without any prior information about the input data.
In \cite{LY24}, the authors generalize the work of \cite{DYNHORMANN} by applying \emph{weighted} least squares in the definition of the subdivision rules thus enhancing the subdivision schemes approximation capabilities while effectively managing noise. Naturally, tensor-product schemes of this univariate schemes can handle gridded multivariate data, which consist of contaminated samples of multivariate functions on tensor-product grids as done in \cite{DYNHORMANN}.

\smallskip In this paper, we extend to the bivariate setting the univariate approach introduced in \cite{LY24} but not in a tensor-product manner. Indeed, we propose a family of linear (non-uniform) subdivision schemes specifically designed for data structured on \emph{triangular meshes}. The refinement rules are grounded in local bivariate first-degree polynomial approximation through a weighted least squares fit applied to a local \emph{subset} of the data. Similar to the univariate case, the resulting least squares subdivision schemes are well-suited to fit noisy data. In addition, the proposed rules are designed to work seamlessly with any triangular grid configuration, whether finite or featuring extraordinary vertices, which is certainly a remarkable property.

Numerical experiments demonstrate that our method performs similarly to the bivariate Moving Least Squares (MLS) method, the Shepard method, the least squares Radial Basis Functions (RBF) method and the least squares Tensor Product B-Splines (TPBS) method, all designed for \emph{scattered data}. Notably, our approach stands out as one of the few subdivision techniques specifically tailored to refine noisy data on triangular meshes. In addition, its subdivision nature has potential to provide specific advantages when used for data approximation, including multiresolution representation, local support and efficiency, smoothness guarantees, natural handling of irregular topology, no parametrization requirement, and suitability for noisy geometric data. Indeed, some of these potential advantages are shown and investigated in this paper.

\smallskip The paper is organized as follows. In Section \ref{sec:preliminaries} basic concepts concerning bivariate subdivision schemes are recalled. The next Section \ref{sec:new_subdivision} describes the derivation of the refinement rules of the new subdivision schemes. General properties of the proposed  least squares subdivision schemes are investigated in Section \ref{sec:properties}. A convergence analysis based on the positivity of their coefficients is carried out in Section \ref{sec_convergence_uniform} for the special case of uniform triangulations. Section \ref{sub:regular} looks at the new least squares polynomial approximation subdivision scheme for equilateral and for triangular-rectangular grids providing examples of subdivision masks and pictures of basic limit functions. Section \ref{sec:num} compares a subdivision scheme of the new family with the MLS, the Shepard, the RBF and the TPBS methods, and discusses application to noisy geometric data.
The closing Section \ref{sec:Conclusion} is to summarize the paper's contribution and to sketch future work.

\section{Preliminaries on subdivision schemes for data on triangular meshes} \label{sec:preliminaries}

Subdivision schemes for data on triangular meshes are iterative methods to define a bivariate smooth function starting from a triangular mesh with attached values, the so-called \emph{initial data set}.

The idea behind a subdivision scheme is to recursively produce \emph{refinements} of the data set. To do so, existing faces of the triangulation are \emph{subdivided} by \emph{adding} vertices and attaching  real values to them, as shown in Figure \ref{fig:polyhedron}. A complete discussion concerning subdivision schemes for triangular meshes, out of the scope of this paper, can be found in the seminal C. Loop's Ph.D. thesis \cite{LOOP} as well as in \cite{C3, STAM1, GINKEL, STAM2}, just to mention a few references. Below, we only provide details of the specific type of subdivision schemes we propose, working on a triangulation $T = (V,E)$ defined by a set of vertices $V = \{\bv_1,\bv_2,\ldots,\bv_N\}\subset\R^2$ and a set of edges $E \subset \{1,\ldots,N\}^2$ connecting them, where $e=(i,j)\in E$ expresses that the vertices $\bv_i$ and $\bv_j$ are connected by an edge.

For readers less familiar with subdivision schemes, we recall that the valence of a vertex is the number of edges incident on it, and that in a triangular mesh, vertices with valence 6 are considered regular, while those with any other valence are called \emph{extraordinary}.

\begin{figure}[!h]
	\centering
	\resizebox{0.4\textwidth}{!}{
	\begin{tikzpicture}
\draw[line width=0.05mm,   black] (0.000000,0.476258) -- (0.501119,0.819991);
\draw[line width=0.05mm,   black] (0.501119,0.819991) -- (0.163998,1.000000);
\draw[line width=0.05mm,   black] (0.163998,1.000000) -- (0.000000,0.476258);
\draw[line width=0.05mm,   black] (0.501119,0.819991) -- (0.163998,1.000000);
\draw[line width=0.05mm,   black] (0.163998,1.000000) -- (0.794040,0.873767);
\draw[line width=0.05mm,   black] (0.794040,0.873767) -- (0.501119,0.819991);
\filldraw[black] (0.000000,0.476258) circle (0.1pt);
\filldraw[black] (0.501119,0.819991) circle (0.1pt);
\filldraw[black] (0.163998,1.000000) circle (0.1pt);
\filldraw[black] (0.794040,0.873767) circle (0.1pt);
\draw[line width=0.05mm,   black] (0.000000,0.066262) -- (0.501119,0.000000);
\draw[line width=0.05mm,   black] (0.501119,0.000000) -- (0.163998,0.385006);
\draw[line width=0.05mm,   black] (0.163998,0.385006) -- (0.000000,0.066262);
\draw[line width=0.05mm,   black] (0.501119,0.000000) -- (0.163998,0.385006);
\draw[line width=0.05mm,   black] (0.163998,0.385006) -- (0.794040,0.361272);
\draw[line width=0.05mm,   black] (0.794040,0.361272) -- (0.501119,0.000000);
\filldraw[black] (0.000000,0.066262) circle (0.1pt);
\filldraw[black] (0.501119,0.000000) circle (0.1pt);
\filldraw[black] (0.163998,0.385006) circle (0.1pt);
\filldraw[black] (0.794040,0.361272) circle (0.1pt);
\draw[densely dotted, line width=0.05mm,   black] (0.000000,0.476258) -- (0.000000,0.066262);
\draw[densely dotted, line width=0.05mm,   black] (0.501119,0.819991) -- (0.501119,0.000000);
\draw[densely dotted, line width=0.05mm,   black] (0.163998,1.000000) -- (0.163998,0.385006);
\draw[densely dotted, line width=0.05mm,   black] (0.794040,0.873767) -- (0.794040,0.361272);
\filldraw[black] (0.000000,0.476258) circle (0.1pt);
\filldraw[black] (0.501119,0.819991) circle (0.1pt);
\filldraw[black] (0.163998,1.000000) circle (0.1pt);
\filldraw[black] (0.794040,0.873767) circle (0.1pt);
\filldraw[black] (0.000000,0.066262) circle (0.1pt);
\filldraw[black] (0.501119,0.000000) circle (0.1pt);
\filldraw[black] (0.163998,0.385006) circle (0.1pt);
\filldraw[black] (0.794040,0.361272) circle (0.1pt);
\node[scale=0.2] at (0.601119,0.769991) {$(x_i,y_i,z_i)$};
\node[scale=0.2] at (0.601119,0.000000) {$(x_i,y_i)$};
\node[scale=0.2] at (0.551119,0.491995) {$z_i$};
\end{tikzpicture}
	}
	\resizebox{0.4\textwidth}{!}{
	\begin{tikzpicture}
\draw[line width=0.05mm,   black] (0.501119,0.819991) -- (0.250559,0.709624);
\draw[line width=0.05mm,   black] (0.250559,0.709624) -- (0.332559,0.868996);
\draw[line width=0.05mm,   black] (0.332559,0.868996) -- (0.501119,0.819991);
\draw[line width=0.05mm,   black] (0.163998,1.000000) -- (0.081999,0.861128);
\draw[line width=0.05mm,   black] (0.081999,0.861128) -- (0.332559,0.868996);
\draw[line width=0.05mm,   black] (0.332559,0.868996) -- (0.163998,1.000000);
\draw[line width=0.05mm,   black] (0.000000,0.476258) -- (0.250559,0.709624);
\draw[line width=0.05mm,   black] (0.250559,0.709624) -- (0.081999,0.861128);
\draw[line width=0.05mm,   black] (0.081999,0.861128) -- (0.000000,0.476258);
\draw[line width=0.05mm,   black] (0.250559,0.709624) -- (0.081999,0.861128);
\draw[line width=0.05mm,   black] (0.081999,0.861128) -- (0.332559,0.868996);
\draw[line width=0.05mm,   black] (0.332559,0.868996) -- (0.250559,0.709624);
\draw[line width=0.05mm,   black] (0.163998,1.000000) -- (0.332559,0.868996);
\draw[line width=0.05mm,   black] (0.332559,0.868996) -- (0.479019,0.977883);
\draw[line width=0.05mm,   black] (0.479019,0.977883) -- (0.163998,1.000000);
\draw[line width=0.05mm,   black] (0.794040,0.873767) -- (0.647580,0.826379);
\draw[line width=0.05mm,   black] (0.647580,0.826379) -- (0.479019,0.977883);
\draw[line width=0.05mm,   black] (0.479019,0.977883) -- (0.794040,0.873767);
\draw[line width=0.05mm,   black] (0.501119,0.819991) -- (0.332559,0.868996);
\draw[line width=0.05mm,   black] (0.332559,0.868996) -- (0.647580,0.826379);
\draw[line width=0.05mm,   black] (0.647580,0.826379) -- (0.501119,0.819991);
\draw[line width=0.05mm,   black] (0.332559,0.868996) -- (0.647580,0.826379);
\draw[line width=0.05mm,   black] (0.647580,0.826379) -- (0.479019,0.977883);
\draw[line width=0.05mm,   black] (0.479019,0.977883) -- (0.332559,0.868996);
\filldraw[black] (0.000000,0.476258) circle (0.1pt);
\filldraw[black] (0.501119,0.819991) circle (0.1pt);
\filldraw[black] (0.163998,1.000000) circle (0.1pt);
\filldraw[black] (0.794040,0.873767) circle (0.1pt);
\filldraw[red] (0.250559,0.709624) circle (0.1pt);
\filldraw[red] (0.081999,0.861128) circle (0.1pt);
\filldraw[red] (0.332559,0.868996) circle (0.1pt);
\filldraw[red] (0.647580,0.826379) circle (0.1pt);
\filldraw[red] (0.479019,0.977883) circle (0.1pt);
\draw[line width=0.05mm,   black] (0.501119,0.000000) -- (0.250559,0.033131);
\draw[line width=0.05mm,   black] (0.250559,0.033131) -- (0.332559,0.192503);
\draw[line width=0.05mm,   black] (0.332559,0.192503) -- (0.501119,0.000000);
\draw[line width=0.05mm,   black] (0.163998,0.385006) -- (0.081999,0.225634);
\draw[line width=0.05mm,   black] (0.081999,0.225634) -- (0.332559,0.192503);
\draw[line width=0.05mm,   black] (0.332559,0.192503) -- (0.163998,0.385006);
\draw[line width=0.05mm,   black] (0.000000,0.066262) -- (0.250559,0.033131);
\draw[line width=0.05mm,   black] (0.250559,0.033131) -- (0.081999,0.225634);
\draw[line width=0.05mm,   black] (0.081999,0.225634) -- (0.000000,0.066262);
\draw[line width=0.05mm,   black] (0.250559,0.033131) -- (0.081999,0.225634);
\draw[line width=0.05mm,   black] (0.081999,0.225634) -- (0.332559,0.192503);
\draw[line width=0.05mm,   black] (0.332559,0.192503) -- (0.250559,0.033131);
\draw[line width=0.05mm,   black] (0.163998,0.385006) -- (0.332559,0.192503);
\draw[line width=0.05mm,   black] (0.332559,0.192503) -- (0.479019,0.373139);
\draw[line width=0.05mm,   black] (0.479019,0.373139) -- (0.163998,0.385006);
\draw[line width=0.05mm,   black] (0.794040,0.361272) -- (0.647580,0.180636);
\draw[line width=0.05mm,   black] (0.647580,0.180636) -- (0.479019,0.373139);
\draw[line width=0.05mm,   black] (0.479019,0.373139) -- (0.794040,0.361272);
\draw[line width=0.05mm,   black] (0.501119,0.000000) -- (0.332559,0.192503);
\draw[line width=0.05mm,   black] (0.332559,0.192503) -- (0.647580,0.180636);
\draw[line width=0.05mm,   black] (0.647580,0.180636) -- (0.501119,0.000000);
\draw[line width=0.05mm,   black] (0.332559,0.192503) -- (0.647580,0.180636);
\draw[line width=0.05mm,   black] (0.647580,0.180636) -- (0.479019,0.373139);
\draw[line width=0.05mm,   black] (0.479019,0.373139) -- (0.332559,0.192503);
\filldraw[black] (0.000000,0.066262) circle (0.1pt);
\filldraw[black] (0.501119,0.000000) circle (0.1pt);
\filldraw[black] (0.163998,0.385006) circle (0.1pt);
\filldraw[black] (0.794040,0.361272) circle (0.1pt);
\filldraw[red] (0.250559,0.033131) circle (0.1pt);
\filldraw[red] (0.081999,0.225634) circle (0.1pt);
\filldraw[red] (0.332559,0.192503) circle (0.1pt);
\filldraw[red] (0.647580,0.180636) circle (0.1pt);
\filldraw[red] (0.479019,0.373139) circle (0.1pt);
\draw[densely dotted, line width=0.05mm,   black] (0.000000,0.476258) -- (0.000000,0.066262);
\draw[densely dotted, line width=0.05mm,   black] (0.501119,0.819991) -- (0.501119,0.000000);
\draw[densely dotted, line width=0.05mm,   black] (0.163998,1.000000) -- (0.163998,0.385006);
\draw[densely dotted, line width=0.05mm,   black] (0.794040,0.873767) -- (0.794040,0.361272);
\filldraw[black] (0.000000,0.476258) circle (0.1pt);
\filldraw[black] (0.501119,0.819991) circle (0.1pt);
\filldraw[black] (0.163998,1.000000) circle (0.1pt);
\filldraw[black] (0.794040,0.873767) circle (0.1pt);
\filldraw[red] (0.250559,0.709624) circle (0.1pt);
\filldraw[red] (0.081999,0.861128) circle (0.1pt);
\filldraw[red] (0.332559,0.868996) circle (0.1pt);
\filldraw[red] (0.647580,0.826379) circle (0.1pt);
\filldraw[red] (0.479019,0.977883) circle (0.1pt);
\filldraw[black] (0.000000,0.066262) circle (0.1pt);
\filldraw[black] (0.501119,0.000000) circle (0.1pt);
\filldraw[black] (0.163998,0.385006) circle (0.1pt);
\filldraw[black] (0.794040,0.361272) circle (0.1pt);
\filldraw[red] (0.250559,0.033131) circle (0.1pt);
\filldraw[red] (0.081999,0.225634) circle (0.1pt);
\filldraw[red] (0.332559,0.192503) circle (0.1pt);
\filldraw[red] (0.647580,0.180636) circle (0.1pt);
\filldraw[red] (0.479019,0.373139) circle (0.1pt);
\end{tikzpicture}
	}
	\caption[Subdivision schemes applied on data with a discontinuity]
	{An example of triangular mesh refinement.
	Left, the initial data. Right, the result of a single refinement step.
	Projected on the bottom, the triangulations to which the data are attached. It can be seen that each face is divided into four smaller faces, requiring the insertion of new vertices.}
	\label{fig:polyhedron}
\end{figure}

\smallskip
As already mentioned, we assume that some real values, $\bz^0=\{z^0_1, z^0_2,\cdots ,z^0_{N^0}\}$, are attached to an initial triangulation $T^{0} = (V^0,E^0)$, with vertices  $\bv^0_j,\ j=1,\ldots, N^0$, defining the data set in $\R^3$:
\begin{equation}\label{dataset}
	\{(\bv^0_1,z^0_1),(\bv^0_2,z^0_2),\cdots  (\bv^0_{N^0},z^0_{N^0})\},\quad (\bv^0_i,z^0_i) \in V^0 \times \R \subset \R^3.
\end{equation}
The first subdivision step consists in refining both the triangulation $T^0$ and the corresponding real values $\bz^0$ to obtain the refined triangulation $T^1$ and the corresponding refined real values $\bz^1$, and so forth for the next subdivision steps. Here, we consider the so-called \emph{semiregular case} where the successive refinements of the triangulation, say $T^{1},T^{2},T^3\ldots$, are simply obtained by adding the mid-point of each grid edge (see Figure \ref{fig:triangulation_after_2_iterations} and the reference \cite{DAUBECHIES}). On the contrary, the refinement of the corresponding data is obtained in a more complicated way: At the $k$-th iteration, given the data $\bz^k=\{z^k_\ell,\ \ell=1,\cdots,N^k\}$, the subdivision step consists in computing each new data $z^{k+1}_\ell$, attached to the vertex $\bv^{k+1}_\ell\in V^{k+1}$ of the triangulation $T^{k+1}$, as a linear combination of some of the previous data with a formula of type
\begin{equation} \label{eq:rules}
	z^{k+1}_\ell=\sum_{j\in \cB^{k+1,\ell}} \alpha^{k+1,\ell}_j z^{k}_j,\quad \ell = 1,\ldots,N^{k+1},
\end{equation}
where the coefficients $\alpha^{k+1,\ell}_j$ are real numbers and $\cB^{k+1,\ell}$ is a set of indices which is chosen so that the vertices $\{\bv^k_j\}_{j\in \cB^{k+1,\ell}}\subset T^k$ are near the new vertex $\bv^{k+1}_\ell$.

The linear operator $\{z^k_j\}_{j\in \cB^{k+1,\ell}} \mapsto z^{k+1}_\ell$ defined in \eqref{eq:rules} is called \emph{refinement rule}. When the new data is attached to a vertex of the previous iteration, i.e. $v^{k+1}_\ell\in V^k$, it is specifically called \emph{replacement} rule, whereas when it is attached to a vertex $v^{k+1}_\ell\in V^{k+1}\setminus V^k$, it is called \emph{insertion} rule.
The set of indices $\cB^{k+1,\ell}$ is the so-called \emph{stencil} of the refinement rule, which indicates the neighbouring values to be involved in the computation.
Observe in \eqref{eq:rules} that we explicitly write the dependence of its coefficients and its stencil on $k,\ell$, meaning that, in general, the refinement rule depends on the iteration and on the location.

\begin{figure}[!h]
	\centering
	\includegraphics[width=0.3\linewidth,clip,trim={170 50 170 50}]{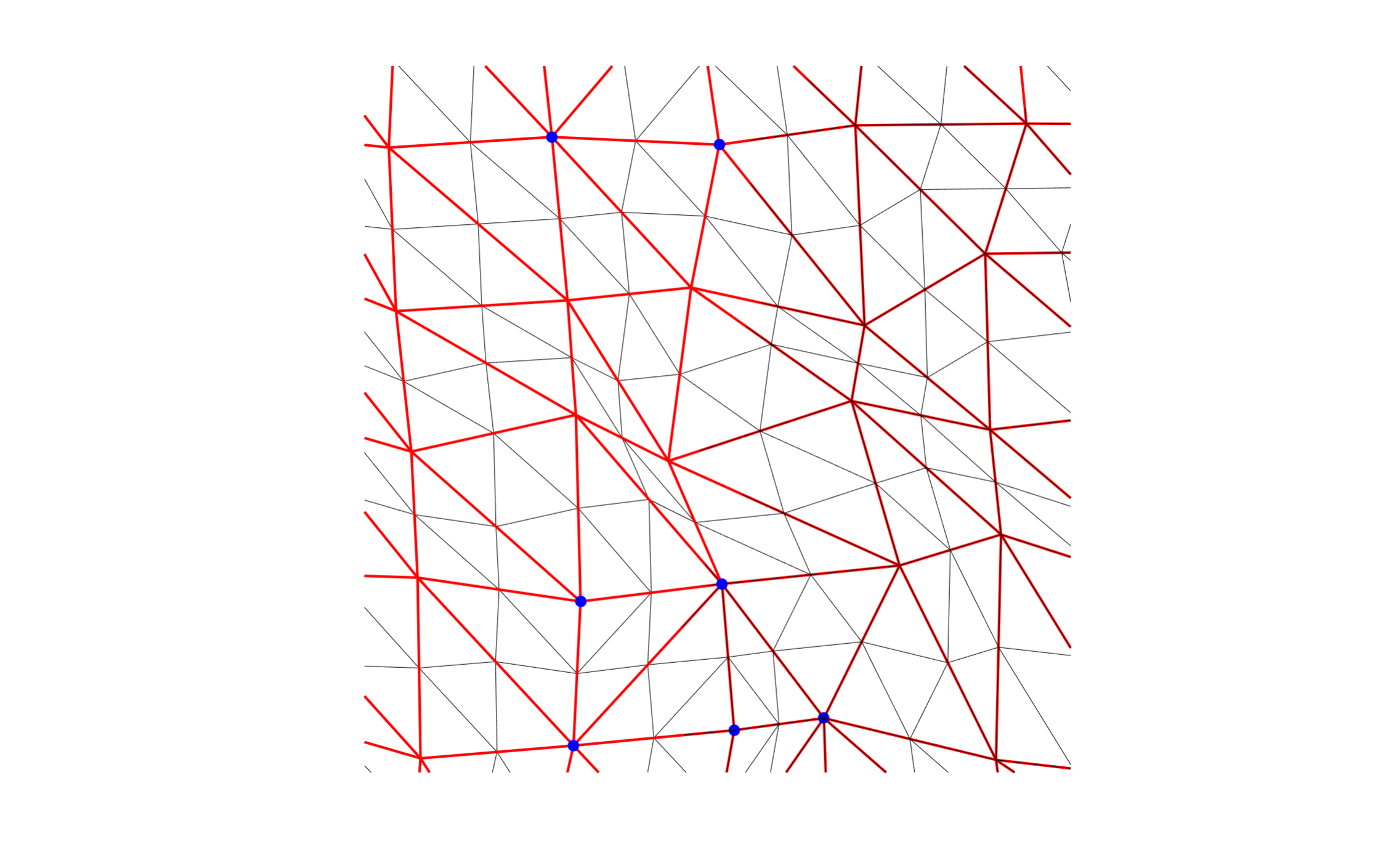}
	\hspace*{20pt}
	\includegraphics[width=0.3\linewidth,clip,trim={170 50 170 50}]{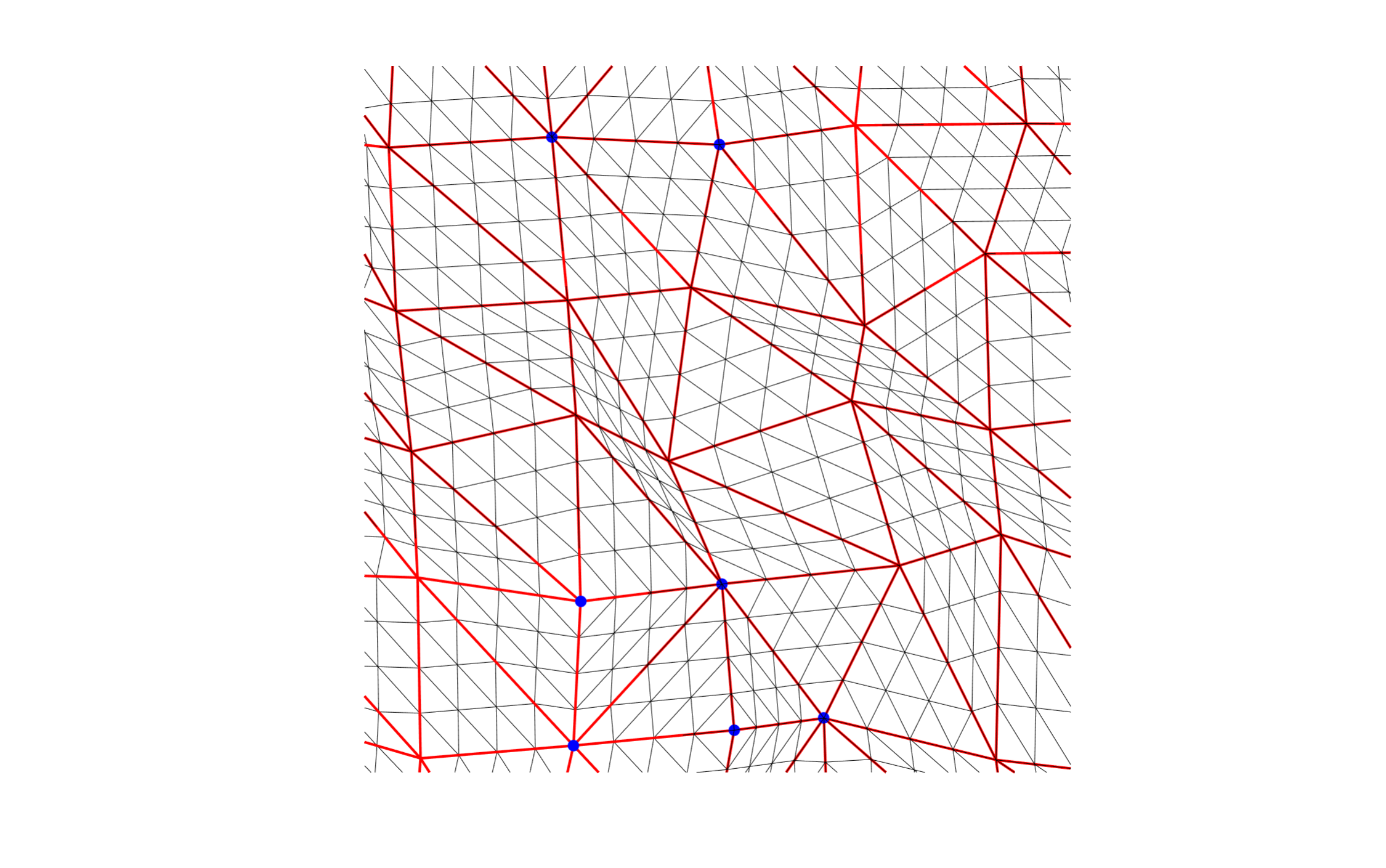}
	\caption{An example of triangulation refinement by mid-point insertion. In red, the original triangulation. In black, the result of a single refinement step (left) and two refinement steps (right). The initial triangulation is irregular (see the extraordinary vertices in blue). No other extraordinary vertices are added by the refinement and each patch defined by the initial faces constitutes a uniform triangulation (see the discussion of Section \ref{sec:properties}).
	\label{fig:triangulation_after_2_iterations}}
\end{figure}

For later use, we conclude the section by mentioning that the refinement rules just described define the $k$-level linear \emph{subdivision operator} $S^{k} : \R^{N^k} \lra \R^{N^{k+1}}$, which given $\bz^k$ returns $\bz^{k+1}$, \ie
\begin{equation}\label{linear}\bz^{k+1}=S^{k} \bz^k,\quad k\ge 0.\end{equation}
In general, the \emph{subdivision operator} $S^k$ depends on vertices location of the triangulation $T^k$, but, in case $T^k$ is \emph{uniform} (see Section \ref{sec_convergence_uniform}), the dependence vanishes. Whenever required, we highlight the dependence by writing $S^k_{T^k}$.

\section{The new weighted least squares subdivision scheme for functional data}\label{sec:new_subdivision}

This section is to describe the new least squares subdivision schemes we propose. We already mentioned that the denser and denser triangulations $T^{1},T^{2},T^{3}\ldots$ are simply obtained by subdividing each face into four similar faces by adding the edges mid-points as Figure \ref{fig:triangulation_after_2_iterations} displays. We define the refined data
$z^{k+1}_\ell,\ \ell=1,\ldots,N^{k+1}$, by computing and evaluating a weighted least squares degree-$1$ polynomial even though higher degree polynomials can be potentially used. The data involved in the least squares problem belong to what we call a `ball' centred at the inserted/replaced vertex $\bv^{k+1}_\ell$. We will name the corresponding sets of indices by $\cB^{k+1,\ell}$. The vertices locations and weights used in the least squares determine the coefficients $\alpha^{k+1,\ell}_j,\ \ell=1,\ldots,N^{k+1}$, used  in the refinement rules \eqref{eq:rules}.

\smallskip
We continue by detailing the weighted least squares degree-$1$ polynomial approximation, in Subsection \ref{sub:31}. Then, in Subsection \ref{sec:subdivision_rules}, we derive the explicit rules of the new subdivision scheme. Finally, in Subsection \ref{subsec:weights} we discuss the weights and stencils selection.

\subsection{The refinement rules based on the weighted least squares degree-$1$ polynomial approximation}\label{sub:31}

\smallskip In this section we explain how the weighted least squares degree-$1$ polynomial approximation is used to define the new subdivision schemes.

Starting from $\bz^k=\{z_1^k,z_2^k,\cdots, z^k_{N_k}\}$, values attached to the triangulation $T^k$, the data
$$\bz^{k+1}=\{z_1^{k+1},z_2^{k+1},\cdots, z^{k+1}_{N_k}\}$$ associated with the refined triangulation $T^{k+1}$ is obtained as follows. Let $\hat \bv = \bv^{k+1}_\ell$ be either a vertex or a mid-point of $T^{k+1}$, associated to $\hat z = z^{k+1}_\ell$. Given a subset of vertices of $T^{k}$ around $\hat \bv$, say $\{\bv_i=(x_i,y_i)\}_{i=1}^n$ which includes at least the three vertices of a face, let $\{z_i\}_{i=1}^n$ the associated data and $\{w_i\}_{i=1}^n$,  associated positive weights. The weighted least squares degree-$1$ polynomial approximation at $\hat \bv$ is obtained
%
%
as the solution of the following minimization problem
\begin{equation}\label{WLS}
	\min_{a_0, a_1, a_2\in \R }\sum_{i=1}^n w_i \left( a_0+a_1x_i+a_2y_i-z_i\right)^2. 
\end{equation}
Solving the previous minimization problem is equivalent to solving the linear system
\[
\begin{pmatrix}
	\displaystyle{\sum_{i=1}^n}w_i		& \displaystyle{\sum_{i=1}^n}w_i x_i 		& \displaystyle{\sum_{i=1}^n}w_i y_i \\
	\displaystyle{\sum_{i=1}^n}w_i x_i 	& \displaystyle{\sum_{i=1}^n}w_i x_i^2 	& \displaystyle{\sum_{i=1}^n}w_i x_i y_i \\
	\displaystyle{\sum_{i=1}^n}w_i y_i 	&\displaystyle{\sum_{i=1}^n}w_i x_i y_i 	& \displaystyle{\sum_{i=1}^n}w_i y_i^2
\end{pmatrix}
\begin{pmatrix} a_0\\ a_1\\ a_2
\end{pmatrix}=
\begin{pmatrix}
	\displaystyle{\sum_{i=1}^n}w_i z_i 		\\
	\displaystyle{\sum_{i=1}^n}w_i z_i x_i  	\\
	\displaystyle{\sum_{i=1}^n}w_i z_i y_i 	\\
\end{pmatrix},
\]
and evaluating the linear polynomial $\hat p_1(x,y)=a_0+a_1\, x+ a_2\, y$ at $\hat \bv$, that is computing $\hat z := p_1(\hat \bv)$.
Since $\Pi_1$ (the space of linear polynomials) is shift invariant,
this is equivalent to consider the points $\{\bv_i - \hat \bv\}_{i=1}^n$ and evaluate the polynomial at $(0,0)$, instead.
Since $\hat z = \hat p_1(0,0) = a_0$, the Cramer's formula reveals that
\begin{align*}
	\hat z = a_0=
	\frac{
	\left|
	\begin{array}{ccc}
		\displaystyle{\sum_{i=1}^n}w_i z_i 		& \displaystyle{\sum_{i=1}^n}w_i x_i 		& \displaystyle{\sum_{i=1}^n}w_i y_i \\
		\displaystyle{\sum_{i=1}^n}w_i z_i x_i 	& \displaystyle{\sum_{i=1}^n}w_i x_i^2 	& \displaystyle{\sum_{i=1}^n}w_i x_i y_i \\
		\displaystyle{\sum_{i=1}^n}w_i z_i y_i 	& \displaystyle{\sum_{i=1}^n}w_i x_i y_i 	& \displaystyle{\sum_{i=1}^n}w_i y_i^2
	\end{array}
	\right|
	}{
	\left|
	\begin{array}{ccc}
		\displaystyle{\sum_{i=1}^n}w_i		& \displaystyle{\sum_{i=1}^n}w_i x_i 		& \displaystyle{\sum_{i=1}^n}w_i y_i \\
		\displaystyle{\sum_{i=1}^n}w_i x_i 	& \displaystyle{\sum_{i=1}^n}w_i x_i^2 	& \displaystyle{\sum_{i=1}^n}w_i x_i y_i \\
		\displaystyle{\sum_{i=1}^n}w_i y_i 	& \displaystyle{\sum_{i=1}^n}w_i x_i y_i 	& \displaystyle{\sum_{i=1}^n}w_i y_i^2
	\end{array}
	\right|
	}.
\end{align*}
Using basic properties of the determinants (specifically its multilinearity), we can write
\begin{align*}
	\hat z=
	\frac{
	\displaystyle{\sum_{j=1}^n}
	\left|
	\begin{array}{ccc}
		w_j z_j 		& \displaystyle{\sum_{i=1}^n}w_i x_i 		& \displaystyle{\sum_{i=1}^n}w_i y_i \\
		w_j z_j x_j 	& \displaystyle{\sum_{i=1}^n}w_i x_i^2 	& \displaystyle{\sum_{i=1}^n}w_i x_i y_i \\
		w_j z_j y_j 	& \displaystyle{\sum_{i=1}^n}w_i x_i y_i 	& \displaystyle{\sum_{i=1}^n}w_i y_i^2
	\end{array}
	\right|
	}{
	\displaystyle{\sum_{j=1}^n}
	\left|
	\begin{array}{ccc}
		w_j		& \displaystyle{\sum_{i=1}^n}w_i x_i 		& \displaystyle{\sum_{i=1}^n}w_i y_i \\
		w_j x_j 	& \displaystyle{\sum_{i=1}^n}w_i x_i^2 	& \displaystyle{\sum_{i=1}^n}w_i x_i y_i \\
		w_j y_j 	& \displaystyle{\sum_{i=1}^n}w_i x_i y_i 	& \displaystyle{\sum_{i=1}^n}w_i y_i^2
	\end{array}
	\right|
	}
	=
	\frac{
	\displaystyle{\sum_{j=1}^n}w_j z_j
	\left|
	\begin{array}{ccc}
		1 		& \displaystyle{\sum_{i=1}^n}w_i x_i 		& \displaystyle{\sum_{i=1}^n}w_i y_i \\
		x_j 	& \displaystyle{\sum_{i=1}^n}w_i x_i^2 	& \displaystyle{\sum_{i=1}^n}w_i x_i y_i \\
		y_j 	& \displaystyle{\sum_{i=1}^n}w_i x_i y_i 	& \displaystyle{\sum_{i=1}^n}w_i y_i^2
	\end{array}
	\right|
	}{
	\displaystyle{\sum_{j=1}^n}w_j
	\left|
	\begin{array}{ccc}
		1		& \displaystyle{\sum_{i=1}^n}w_i x_i 		& \displaystyle{\sum_{i=1}^n}w_i y_i \\
		x_j 	& \displaystyle{\sum_{i=1}^n}w_i x_i^2 	& \displaystyle{\sum_{i=1}^n}w_i x_i y_i \\
		y_j 	& \displaystyle{\sum_{i=1}^n}w_i x_i y_i 	& \displaystyle{\sum_{i=1}^n}w_i y_i^2
	\end{array}
	\right|
	}.
\end{align*}
Therefore, defining the determinants
\begin{equation} \label{eq:mu}
	\mu_j := \left|
	\begin{array}{ccc}
		1		& \displaystyle{\sum_{i=1}^n}w_i x_i 		& \displaystyle{\sum_{i=1}^n}w_i y_i \\
		x_j 	& \displaystyle{\sum_{i=1}^n}w_i x_i^2 	& \displaystyle{\sum_{i=1}^n}w_i x_i y_i \\
		y_j 	& \displaystyle{\sum_{i=1}^n}w_i x_i y_i 	& \displaystyle{\sum_{i=1}^n}w_i y_i^2
	\end{array}
	\right|, \quad j=1,\ldots, n,
\end{equation}
we arrive at the formula we will use in Section \ref{sec:subdivision_rules} to define the refinement subdivision rules:
\begin{equation} \label{eq:pre_rule}
	\hat z=\sum_{i=1}^n \alpha_i z_i, \qquad \alpha_i := \frac{w_i \mu_i}{\sum_{j=1}^n w_j \mu_j}.
\end{equation}
Observe that it is well-defined as long as the denominator is not zero.

Next, we explore the geometric meaning of the value $\mu_j$. Indeed, applying again the multilinearity of the determinant we have
\begin{align*}
	\mu_j &= \left|
	\begin{array}{ccc}
		1		& \displaystyle{\sum_{i=1}^n}w_i x_i 		& \displaystyle{\sum_{i=1}^n}w_i y_i \\
		x_j 	& \displaystyle{\sum_{i=1}^n}w_i x_i^2 	& \displaystyle{\sum_{i=1}^n}w_i x_i y_i \\
		y_j 	& \displaystyle{\sum_{i=1}^n}w_i x_i y_i 	& \displaystyle{\sum_{i=1}^n}w_i y_i^2
	\end{array}
	\right|
	=
	\sum_{i=1}^n
	\left|
	\begin{array}{ccc}
		1		& w_i x_i 		& \displaystyle{\sum_{i=1}^n}w_i y_i \\
		x_j 	& w_i x_i^2 	& \displaystyle{\sum_{i=1}^n}w_i x_i y_i \\
		y_j 	& w_i x_i y_i 	& \displaystyle{\sum_{i=1}^n}w_i y_i^2
	\end{array}
	\right|
	=
	\sum_{i=1}^n w_i x_i
	\left|
	\begin{array}{ccc}
		1		& 1 		& \displaystyle{\sum_{i=1}^n}w_i y_i \\
		x_j 	& x_i 	& \displaystyle{\sum_{i=1}^n}w_i x_i y_i \\
		y_j 	& y_i 	& \displaystyle{\sum_{i=1}^n}w_i y_i^2
	\end{array}
	\right|,
\end{align*}
and using the same arguments with the last column, we arrive at
\begin{align*}
	\mu_j &=
	\sum_{i,\ell=1}^n w_i x_i w_\ell y_\ell\left|
	\begin{array}{ccc}
		1		& 1 		& 1 \\
		x_j 	& x_i 	& x_\ell \\
		y_j 	& y_i 	& y_\ell
	\end{array}
	\right|.
\end{align*}
For $i=\ell$, the determinant is zero. For the other indices, we exploit the symmetry of the cases $i<\ell$ and $\ell<i$ and get,
\begin{align*}
	\mu_j &=
	\sum_{1\leq i < \ell \leq n}^n w_i x_i w_\ell y_\ell\left|
	\begin{array}{ccc}
		1		& 1 		& 1 \\
		x_j 	& x_i 	& x_\ell \\
		y_j 	& y_i 	& y_\ell
	\end{array}
	\right|
	+
	\sum_{1\leq \ell< i \leq n} w_i x_i w_\ell y_\ell\left|
	\begin{array}{ccc}
		1		& 1 		& 1 \\
		x_j 	& x_i 	& x_\ell \\
		y_j 	& y_i 	& y_\ell
	\end{array}
	\right|,\\
	&=
	\sum_{1\leq i < \ell \leq n}^n w_i x_i w_\ell y_\ell\left|
	\begin{array}{ccc}
		1		& 1 		& 1 \\
		x_j 	& x_i 	& x_\ell \\
		y_j 	& y_i 	& y_\ell
	\end{array}
	\right|
	-
	\sum_{1\leq \ell < i \leq n} w_i x_i w_\ell y_\ell\left|
	\begin{array}{ccc}
		1		& 1 		& 1 \\
		x_j 	& x_\ell 	& x_i \\
		y_j 	& y_\ell 	& y_i
	\end{array}
	\right|.
\end{align*}
Swapping the indices ($i\leftrightarrow\ell$) in the second summation, we arrive at
\begin{align}
	\mu_j
	&=
	\sum_{1\leq i < \ell \leq n}^n w_i x_i w_\ell y_\ell\left|
	\begin{array}{ccc}
		1		& 1 		& 1 \\
		x_j 	& x_i 	& x_\ell \\
		y_j 	& y_i 	& y_\ell
	\end{array}
	\right|
	-
	\sum_{1\leq i < \ell \leq n} w_\ell x_\ell w_i y_i\left|
	\begin{array}{ccc}
		1		& 1 		& 1 \\
		x_j 	& x_i 	& x_\ell \\
		y_j 	& y_i 	& y_\ell
	\end{array}
	\right|
	\\
	&=
	\sum_{1\leq i < \ell \leq n}^n (w_i x_i w_\ell y_\ell-w_\ell x_\ell w_i y_i)\left|
	\begin{array}{ccc}
		1		& 1 		& 1 \\
		x_j 	& x_i 	& x_\ell \\
		y_j 	& y_i 	& y_\ell
	\end{array}
	\right|
	=\sum_{1\leq i < \ell\leq n} w_i w_\ell\left|
	\begin{array}{cc}
		x_i 	& x_\ell \\
		y_i 	& y_\ell
	\end{array}
	\right|\left|
	\begin{array}{ccc}
		1		& 1 		& 1 \\
		x_j 	& x_i 	& x_\ell \\
		y_j 	& y_i 	& y_\ell
	\end{array}
	\right|
	\\
	&=\sum_{1\leq i < \ell\leq n} w_i w_\ell\left|
	\begin{array}{cc}
		\bv_i 	& \bv_\ell
	\end{array}
	\right|\left|
	\begin{array}{ccc}
		1		& 1 		& 1 \\
		\bv_j 	& \bv_i 	& \bv_\ell
	\end{array}
	\right|. \label{eq:mu_final}
\end{align}
From above we see that the $2\times2$ determinant is zero whenever the evaluation point $(0,0)$ and $\bv_i,\bv_\ell$ are collinear, while the $3\times3$ determinant is zero when $\bv_j,\bv_i,\bv_\ell$ are collinear. In other words, a summand is zero only if $\bv_i,\bv_\ell$ are aligned with either the insertion point or $\bv_j$.
\begin{rmk} \label{rmk_well_defined}
	Some comments are worth making
	\begin{enumerate}[(i)]
		
		\item \label{rmk_well_defined_collinear} Since we are performing a linear polynomial fitting, the new data $\hat z$ is well-defined provided there exist three non-collinear vertices $\bv_j,\bv_i,\bv_\ell$ such that $\bv_i,\bv_\ell,\hat \bv$ are also non-collinear. This means that, $\hat z$ is well-defined whenever the stencil that the algorithm uses contains a face of the triangulation $T^k$.
		
		\item \label{rmk_well_defined_2} In the special situations when $\displaystyle{\sum_{i=1}^n}w_i x_i = \displaystyle{\sum_{i=1}^n}w_i y_i = 0$ ---and this is the case of uniform triangulations discussed in Section \ref{sec_convergence_uniform}--- the $\mu_j$ determinants do not really depend on $j$, being
\[
		\mu_j = \left|
		\begin{array}{cc}
			\displaystyle{\sum_{i=1}^n}w_i x_i^2 	& \displaystyle{\sum_{i=1}^n}w_i x_i y_i \\
			\displaystyle{\sum_{i=1}^n}w_i x_i y_i 	& \displaystyle{\sum_{i=1}^n}w_i y_i^2
		\end{array}
		\right|, \quad \forall j=1,\ldots,n.\]
		Thus, the refinement rule in \eqref{eq:pre_rule} simply becomes
		\begin{equation}\label{rulespecific}
			\hat z=\sum_{i=1}^n \alpha_i z_i, \qquad
			\alpha_i = \frac{w_i}{\sum_{j=1}^n w_j},\quad i=1,\ldots,n.
		\end{equation}
	\end{enumerate}
\end{rmk}

\subsection{The L-ball for weighted least squares subdivision schemes}\label{sec:subdivision_rules}

Based on the previous section, we here provide all the details needed to define our new family of subdivision schemes for noisy data. We name the schemes as the {\bf$L$-ball weighted least squares subdivision schemes} for data on triangulations, where \emph{$L$ is a fixed positive number} a priori selected. Different selections of $L$ generate different schemes.

On a given subdivision level $k+1$ and on a given vertex index $\ell$, we start by associating with the vertex $\bv^{k+1}_\ell$ ---which can be either a vertex or a mid-point of $T^{k}$--- the $L$-ball $\{\bv^{k}_j\}_{j\in\cB^{k+1,\ell}}$, based on the set of indices
\begin{equation} \label{eq_stencil_definition}
	\cB^{k+1,\ell} := \{ j \in \{1,2,\ldots,N^k\} \ : \ \ \|\bv^{k}_j-\bv^{k+1}_\ell\| < 2^{-k}L\},
\end{equation}
where $\|\cdot\|$ is the Euclidean norm.
Then, replacing $\bv_i$ by $\bv^k_{i} - \bv_\ell^{k+1}$ in \eqref{eq:pre_rule} and \eqref{eq:mu_final} and using $\cB^{k+1,\ell}$,
we arrive at the refinement rules
\begin{equation}\label{newrules}
	z_\ell^{k+1}=\sum_{i\in\cB^{k+1,\ell}} \alpha^{k+1,\ell}_i z^k_{i}, \quad \alpha^{k+1,\ell}_i := \frac{w^{k+1,\ell}_{i} \mu^{k+1,\ell}_i}{\sum_{j\in\cB^{k+1,\ell}} w^{k+1,\ell}_{j} \mu^{k+1,\ell}},
\end{equation}
where $w^{k+1,\ell}_{i}$ are selected positive weights and
\begin{align*}
	\mu^{k+1,\ell}_i :=\sum \Big\{w^k_{j_1} w^k_{j_2}\left|
	\begin{array}{cc} \bv^k_{j_1} - \bv_\ell^{k+1} 	& \bv^k_{j_2} - \bv_\ell^{k+1}
	\end{array}
	\right| \left|
	\begin{array}{ccc}
		1		& 1 		& 1 \\
		\bv^k_{i} - \bv_\ell^{k+1} 	& \bv^k_{j_1} - \bv_\ell^{k+1} 	& \bv^k_{j_2} - \bv_\ell^{k+1}
	\end{array}
	\right|
	\\ : \ j_1,j_2\in \cB^{k+1,\ell}, \ j_1 < j_2
	\Big\},\quad i\in\cB^{k+1,\ell}.
\end{align*}
To simplify the last definition, we use the determinant invariant under the addition of a multiple of one row to another:
\begin{align*}
	&\left|
	\begin{array}{cc} \bv^k_{j_1} - \bv_\ell^{k+1} 	& \bv^k_{j_2} - \bv_\ell^{k+1}
	\end{array}
	\right|
	=
	\left|
	\begin{array}{ccc}
		1 & 1 & 1 \\
		\bv_\ell^{k+1} & \bv^k_{j_1} & \bv^k_{j_2}
	\end{array}
	\right|
	\\
	&\left|
	\begin{array}{ccc}
		1		& 1 		& 1 \\
		\bv^k_{i} - \bv_\ell^{k+1} 	& \bv^k_{j_1} - \bv_\ell^{k+1} 	& \bv^k_{j_2} - \bv_\ell^{k+1}
	\end{array}
	\right|
	=
	\left|
	\begin{array}{ccc}
		1		& 1 		& 1 \\
		\bv^k_{i} 	& \bv^k_{j_1} 	& \bv^k_{j_2}
	\end{array}
	\right|.
\end{align*}
Then, a simplified version of the previous formula for $\mu^{k+1,\ell}_i$ is
\begin{align*}
	\mu^{k+1,\ell}_i =\sum \left\{w^k_{j_1} w^k_{j_2}
	\left|
	\begin{array}{ccc}
		1 & 1 & 1 \\
		\bv_\ell^{k+1} & \bv^k_{j_1} & \bv^k_{j_2}
	\end{array}
	\right|
	\left|
	\begin{array}{ccc}
		1		& 1 		& 1 \\
		\bv^k_{i} 	& \bv^k_{j_1} 	& \bv^k_{j_2}
	\end{array}
	\right| \ : \ j_1,j_2\in \cB^{k+1,\ell}, \ j_1 < j_2
	\right\},\quad i\in\cB^{k+1,\ell}.
\end{align*}

 { Figure \ref{fig:rings} shows an example of $L$-balls used for the refinement rules where the center of the ball is marked with a cross, while red dots show the vertices inside the balls.}

\begin{figure}[!h]
	\centering
	\begin{tabular}{cc}
		\resizebox{0.4\textwidth}{!}{
		\input{images/ring_replacement.txt}
		}
		&
		\resizebox{0.4\textwidth}{!}{
		\input{images/ring_insertion.txt}
		}
	\end{tabular}
	\caption{Example of balls used for the refinement rules. The center of the ball is marked with a cross (a vertex, in the left figure, and a mid-point, in the right figure). Red dots show the vertices inside the balls.}
	\label{fig:rings}
\end{figure}

\subsection{Weights selection}\label{subsec:weights}
We continue by discussing the selection of the weights used in the weighted least squares approximation and consequently in the subdivision schemes.  Since in our approach the polynomial $\hat p_1$ is evaluated at a {given} fixed vertex $\bv^{k+1}_{\ell}$ (either a vertex or a mid-point of $T^k$) the weights $w^{k+1,\ell}_{i}$ are set by the help of a weight function, $W : [0, 1)\rightarrow (0,1]$, that takes into consideration the Euclidean distance among the vertices $\bv^k_{i}$ and $\bv^{k+1}_{\ell}$ that is as:
\begin{equation}\label{eq_W}
	w^{k+1,\ell}_{i}=W\left (\frac{\|\bv^{k+1}_{\ell} -\bv^k_{i}\|}{2^{-k}L}\right ),\quad  i\in\cB^{k+1,\ell}.
\end{equation}
By the definition of $\cB^{k+1,\ell}$ in \eqref{eq_stencil_definition}, the weight function $W$ is always evaluated in $[0,1)$.

Among the several possible choices, in Section \ref{sec:num} we have experimented with the constant function $W(\cdot)=1$ (corresponding to the case where no weights are taken in to account), the hat function $W(\cdot)=1-\cdot$, and the Gaussian function $W(\cdot)=e^{-(2.5\cdot)^2/2}$.

\medskip
{We continue by observing several important facts.
\begin{rmk} \label{rmk_uniform_case_rule} Properties of the coefficients:
	\begin{enumerate}[(i)]
		
\item Since the coefficient $\alpha^{k+1,\ell}_i$  and the  weight $w^{k+1,\ell}_{i}$ depend on $\ell$ and $k$, the subdivision scheme is level-dependent and non-uniform meaning that, a priori, the rules change with both the level and with the location. This will be not the case of uniform grids, where the subdivision schemes become level-independent and uniform, meaning that each $\alpha^{k+1,\ell}_i$ in \eqref{newrules} only depends on $i$ and the parity of $\ell$;

		\item \label{rmk_uniform_case_rule_2}
		According to Remark \ref{rmk_well_defined}-\eqref{rmk_well_defined_2}, in a grid configuration where
		\begin{equation} \label{eq_sum_zero}
			\sum_{i\in\cB^{k+1,\ell}}w^{k+1,\ell}_{i} (\bv^k_{i} - \bv^{k+1}_\ell) = 0,
		\end{equation}
		we simply have that
		\begin{equation} \label{eq_rule_uniform}
			z_\ell^{k+1}=\sum_{i\in\cB^{k+1,\ell}} \alpha^{k+1,\ell}_i z^k_{i}, \quad \alpha^{k+1,\ell}_i = \frac{w^{k+1,\ell}_{i}}{
			\displaystyle{\sum_{i\in\cB^{k+1,\ell}}}w^{k+1,\ell}_{i}
			};
		\end{equation}
		Expression \eqref{eq_sum_zero} can be reformulated  as
\[	\bv^{k+1}_\ell = \frac{\sum_{i\in\cB^{k+1,\ell}}w^{k+1,\ell}_{i} \bv^k_{i}}{\sum_{i\in\cB^{k+1,\ell}}w^{k+1,\ell}_{i}},\]
		meaning that $\bv^{k+1}_\ell$ is the weighted average of the vertices in the $L$-ball of $\bv^{k+1}_\ell$, $\cB^{k+1,\ell}$. In particular, this holds true in uniform triangulations (see Lemma \ref{lem:sum_weights});

\item In a general grid, some coefficients $\alpha^{k+1,\ell}_i$ may be negative, in contrast with the case \eqref{eq_sum_zero}. For example, the specific location of the vertices in Figure \ref{fig:negative_weights} leads to negative coefficients (see Section Reproducibility for details).
	\end{enumerate}
\end{rmk}		
\begin{rmk}\label{rmk_uniform_case_rule_3} Properties of the new scheme:
\begin{enumerate}[(i)]
		\item\label{onL} According to Remark \ref{rmk_well_defined}-\eqref{rmk_well_defined_collinear}, it is necessary that all $L$-balls contain at least a triangular face, because it guarantees that the degree-$1$ polynomial regression can be performed. For this purpose, we demand $L$ to be larger than the \emph{triangulation diameter}:
		\begin{equation}\label{L}
			L > \sup\{\|\bv^{0}_{i} -\bv^0_{j}\| \ : \ (i,j)\in E^0\}.
		\end{equation}
		Since every triangle is subdivided on every iteration, the triangulation diameter halves at each iteration and, then, we can halve $L$ at each iteration as well, which motivates the power of 2 in the definitions of $\cB^{k+1,\ell}$ and $w^{k+1,\ell}_{i}$, in \eqref{eq_stencil_definition} and \eqref{eq_W};

		\item In contrast with other subdivision schemes, the insertion and replacement rules are conceptually the same;
		
		\item The proposed rules can be directly applied to finite triangulations, as in practice. There is no need to provide some special procedure for data near the boundary;
		
		\item Similarly, the rules can be applied even in presence of extraordinary vertices (irregular triangulations);
		
		\end{enumerate}
\end{rmk}

We continue by introducing the following notion.
\begin{defi}[0-homogeneouty]
A subdivision scheme is said to be \emph{0-homogeneous}
 with respect to the simultaneous scaling of the grid and the parameter $L$ if $$z^{k+1}_\ell(h L,h V^k)= z^{k+1}_\ell(L,V^k).$$
\end{defi}

\begin{lem}\label{item_0_homogeneous} The subdivision scheme with rules in \eqref{newrules}
is 0-homogeneous respect to the simultaneous scaling of the grid and the parameter.
\end{lem}

\begin{proof}Obviously, the quantities $z^{k+1}_\ell, \alpha^{k+1,\ell}_i, w^{k+1,\ell}_{i},\mu^{k+1,\ell}_i$ and $\cB^{k+1,\ell}$ are functions of $L$ and $V^k$. But, we easily see that $\cB^{k+1,\ell}(L,V^k)= \cB^{k+1,\ell}(hL,h V^k)$ for any $h>0$ and similarly that $w^{k+1,\ell}_{i}(L,V^k)=w^{k+1,\ell}_{i}(hL,h V^k)$ according to \eqref{eq_stencil_definition} and \eqref{eq_W}. Moreover we also see that $\mu^{k+1,\ell}_i(h L,h V^k)=h^4 \mu^{k+1,\ell}_i(L,V^k)$ and $\alpha^{k+1,\ell}_i(h L,h V^k)= \alpha^{k+1,\ell}_i(L,V^k)$. As a consequence, $z^{k+1}_\ell(h L,h V^k)= z^{k+1}_\ell(L,V^k)$, which is the \emph{0-homogeneouty}  with respect to the simultaneous scaling of the grid and the parameter $L$;
\end{proof}

\begin{rmk}\label{LW}
We finally remark that the parameter $L$ does affect the subdivision scheme and it is crucial that its selection is made in order to
 guarantees that the degree-1 polynomial regression can be performed at each step (see Remark \ref{rmk_uniform_case_rule_3}-\eqref{onL}).  Obviously different values of $L$ generate different schemes as shown in the numerical tests. Increasing $L$ improves denoising, but worsens the approximation capability.
As for the parameter $L$, the proposed subdivision schemes are sensitive to the weights selection. Even though they can be selected in many ways, our idea is to adopt a classical approach based on a weight function that takes into consideration the Euclidean distance among the vertices and therefore also of the iterations. For clarity, the weight functions we use in our examples are explicitly indicated.
\end{rmk}}

\begin{figure}[!h]
	\centering
	\resizebox{0.8\textwidth}{!}{
	\begin{tikzpicture}
\draw[line width=0.3mm,   black] (5.00,5.00) -- (0.00,6.25);
\draw[line width=0.3mm,   black] (0.00,6.25) -- (1.25,6.25);
\draw[line width=0.3mm,   black] (1.25,6.25) -- (5.00,5.00);
\draw[line width=0.3mm,   black] (5.00,5.00) -- (1.25,6.25);
\draw[line width=0.3mm,   black] (1.25,6.25) -- (2.50,6.25);
\draw[line width=0.3mm,   black] (2.50,6.25) -- (5.00,5.00);
\draw[line width=0.3mm,   black] (5.00,5.00) -- (2.50,6.25);
\draw[line width=0.3mm,   black] (2.50,6.25) -- (3.75,6.25);
\draw[line width=0.3mm,   black] (3.75,6.25) -- (5.00,5.00);
\draw[line width=0.3mm,   black] (5.00,5.00) -- (3.75,6.25);
\draw[line width=0.3mm,   black] (3.75,6.25) -- (10.00,6.25);
\draw[line width=0.3mm,   black] (10.00,6.25) -- (5.00,5.00);
\draw[line width=0.3mm,   black] (5.00,5.00) -- (10.00,6.25);
\draw[line width=0.3mm,   black] (10.00,6.25) -- (8.75,3.75);
\draw[line width=0.3mm,   black] (8.75,3.75) -- (5.00,5.00);
\draw[line width=0.3mm,   black] (5.00,5.00) -- (8.75,3.75);
\draw[line width=0.3mm,   black] (8.75,3.75) -- (0.00,6.25);
\draw[line width=0.3mm,   black] (0.00,6.25) -- (5.00,5.00);
\filldraw[black] (5.00,5.00) circle (1.5pt);
\node[anchor=south west] at (5.00,5.00) {$\frac{249}{1025}$};
\filldraw[black] (0.00,6.25) circle (1.5pt);
\node[anchor=south west] at (0.00,6.25) {$\frac{128}{1025}$};
\filldraw[black] (1.25,6.25) circle (1.5pt);
\node[anchor=south west] at (1.25,6.25) {$\frac{22}{205}$};
\filldraw[black] (2.50,6.25) circle (1.5pt);
\node[anchor=south west] at (2.50,6.25) {$\frac{92}{1025}$};
\filldraw[black] (3.75,6.25) circle (1.5pt);
\node[anchor=south west] at (3.75,6.25) {$\frac{74}{1025}$};
\filldraw[black] (10.00,6.25) circle (1.5pt);
\node[anchor=south west] at (10.00,6.25) {$-\frac{16}{1025}$};
\filldraw[black] (8.75,3.75) circle (1.5pt);
\node[anchor=south west] at (8.75,3.75) {$\frac{388}{1025}$};
\end{tikzpicture}		
	}
	\caption{An example of stencil configuration leading to negative coefficients $\alpha^{k+1,\ell}_i$ (the numbers in the graphic), which has been obtained considering all weights equal to 1 and the vertex coordinates (from top to bottom, from left to right) $(-4,1),(-3,1),(-2,1),(-1,1),(4,1),(0,0),(3,-1)$. {The new vertex would be inserted at the origin.} See Section Reproducibility to recreate this example.}
	\label{fig:negative_weights}
\end{figure}

%
%

\section{Properties of the new weighted least squares degree-$1$ polynomial subdivision schemes}\label{sec:properties}
This section is dedicated to the analysis of the properties satisfied by the new subdivision schemes presented in Section \ref{sec:new_subdivision}. It is well known that, for subdivision schemes dealing with a general type of grid,  the properties investigation is very challenging
since the needed tools depend on the type of grid, and the more general the geometry and the topology of the grid are, the harder is to derive properties. We recall that, generally speaking, the topology of the grid is related to the \emph{regularity} of a grid, while the geometry of the grid is related to its \emph{uniformity}.

\subsection{Types of grids}

The most simple type of grid is the so-called \emph{triangular-rectangular} grid, consisting of the $\Z^2$ vertices connected by edges moving in the directions $(1,0),(1,1),(0,1)$ (see Figure \ref{fig:regular}-(a)). The next level of complexity is the \emph{uniform} grid, which is obtained by applying an invertible linear transformation to the triangular-rectangular grid,
which is therefore a uniform grid itself. Hence, a uniform grid consists of the vertices connected by edges along three principal directions.
Uniform grids are \emph{regular}, since all the vertices has \emph{valence}\footnote{The number of edges that meet at a vertex is it valence} 6. A well studied  example of uniform grid is the \emph{equilateral} grid, shown in Figure \ref{fig:regular}-(b).

\begin{figure}[!h]
	\centering
	\begin{tabular}{cccc}
		\resizebox{0.23\textwidth}{!}{
		\input{images/rectangular_grid.txt}
		}&
		\resizebox{0.23\textwidth}{!}{
		\input{images/equilateral_grid.txt}
		}&
		\resizebox{0.23\textwidth}{!}{
		\input{images/regular_nonuniform.txt}
		}&
		\resizebox{0.23\textwidth}{!}{
		\input{images/irregular.txt}
		}\\
		(a) & (b) & (c) & (d)
	\end{tabular}
	\caption
	{
	Examples of grids with different regularities and uniformities. (a) The triangular-rectangular grid and (b) the equilateral grid are regular and uniform. A regular non-uniform grid (c) and an irregular non-uniform grid (d) are also shown.
	}
	\label{fig:regular}
\end{figure}

A further level of complexity is given by a \emph{regular non-uniform} grid, meaning that all vertices has valence six but the vertex distribution and the vertices are more freely located, as in Figure \ref{fig:regular}-(c). Finally, the most general grid is the \emph{irregular non-uniform} grid, where the valence of the vertices varies along the grid as in Figure \ref{fig:regular}-(d). As already mentioned, regularity refers to the topology of the grid, while uniformity refers to the geometry of the grid.

The proposed subdivision scheme belongs to a special class of irregular schemes called \emph{semi-regular}, as many of the subdivision schemes investigated in the literature. Semi-regular means that, even if the initial grid is non-regular, the grid refinement rules are such that, after some iterations, the refined grid is regular and uniform `by patches'. Figure \ref{fig:triangulation_after_2_iterations} shows a 2-step refinement of a non-regular initial grid refined by the mid-point addition procedure, where the \emph{semi-regularity} is evident.

For this type of schemes, as ours, the analysis must be conducted differently according to different types of grid regions: In the inner of a patch, at the edge where two patches intersect and at the vertices where several patches meet.
In the inner of each patch, the grid is uniform and so is the scheme, and therefore it can be analysed according to the next Theorem \ref{tmh:uniform}. At the edge separating two patches, excluding the extremes of that edge, all the vertices are regular hence the analysis can be conducted as in \cite{LEVINLEVIN} involving two patches.
At the vertices where several patches meet, convergence analysis can be done by using the notion of \emph{characteristic map} proposed to show convergence around extraordinary vertices of any valence \cite{Reif95,Umlauf00}.  Observe that, if there are extraordinary vertices in the initial grid, they become  \lq isolated\rq\,  from each other along the iterations.

As we see, the three situations have been addressed in the literature, so that no further theory development is required. However, the application of the characteristic map is non-trivial and out of the scope of this paper. For the analysis of \lq semi-regular\rq\, subdivision scheme we refer to \cite{LEVINLEVIN}. Here, we continue with the analysis for uniform grids {while, in case of non-regular grids, we provide numerical validation of continuity in the Numerical Results section.}

\subsection{Convergence for uniform grids \label{sec_convergence_uniform}}

In this subsection, we first state a general notion of convergence and then use it to show that our new scheme is convergent in case of unbounded uniform grids, whose vertices at level $k$ form a lattice:
\begin{equation}\label{grid}
	\bv^k_\bl = 2^{-k} [\be^1, \be^2] \bl = 2^{-k} \ell_1 \be^1 + 2^{-k} \ell_2 \be^2 \bl, \quad  \bl\in\Z^2,
\end{equation}
where $[\be^1, \be^2]$ is the matrix with columns $\be^1$ and $\be^2$, which must be invertible.
Note that, for convenience, and without loss of generality, now we are indexing the vertices by $\bl\in\Z^2$ and not by $\ell\in \N$.

Let $\bz^0=\{z^0_\bl\}_{\bl\in\Z^2}\subset\R$ be the initial data set and let $\bz^k=\{z^k_\bl\}_{\bl\in\Z^2}\subset\R$ be the $k$-refined data set, obtained via one member of the new family of subdivision schemes specified in Section 3.
We say that the subdivision scheme is uniformly convergent if there exists $F\in C(\R^2,\R)$ (called the limit function) such that
$$
\lim_{k\rightarrow \infty}\sup_{\bl\in\Z^2} \left|z^k_\bl-F(\bv^k_\bl) \right|=0.
$$

In the following statements, we assume that $k\geq0$ is indicating the subdivision step, while $\bl\in\Z^2$ a generic location. We also assume that $\cB^{k+1,\bl}$ is denoting the $L$-ball of $\bv^{k+1}_\bl$ and that $w^{k+1,\bl}_ \bi,\ \bi\in\cB^{k+1,\bl},$ are the weights of the rules, respectively defined in as \eqref{eq_stencil_definition} and \eqref{eq_W}, but for indices in $\Z^2$. That is,
\begin{equation} \label{eq_stencil_definition2}
	\cB^{k+1,\bl} := \{ \bi \in \Z^2: \|\bv^{k+1}_\bl - \bv^{k}_\bi\| < L\},\quad
	w^{k+1,\bl}_{\bi}=W\left (\frac{\|\bv^{k+1}_{\bl} -\bv^k_{\bi}\|}{2^{-k}L}\right ).
\end{equation}

{We continue with a technical lemma discussing invariance properties of the stencils and of the vertices distance in case of uniform grids.}

\begin{lem} \label{lem_parity}
	Let $k\geq0$, $\bl\in\Z^2$. For a uniform grid, the stencil $\cB^{k+1,\bl}$ defined in \eqref{eq_stencil_definition2} is a shift of some stencil $\cB^{k+1,\bar \bl}$, where $\bar \bl\in\{0,1\}^2$. More in details, for $\bl = 2\dot \bl + \bar \bl\in\Z^2$, with $\dot \bl\in\Z^2$ and $\bar \bl\in\{0,1\}^2$, we have
	$$\cB^{k+1,\bl} = \cB^{k+1,\bar \bl} + \dot \bl.$$
	In addition, we have
	\begin{equation} \label{eq_difference_v}
		\bv^{k+1}_\bl - \bv^k_{\bi}  = \bv^{k+1}_{\bar\bl} - \bv^k_{\bi - \dot \bl}, \quad \bi\in \Z^2.	
	\end{equation}
\end{lem}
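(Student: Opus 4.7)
The plan is to reduce the whole lemma to a one-line computation, derive the shift identity \eqref{eq_difference_v} first, and then read off the stencil equality as an immediate consequence of the fact that the Euclidean distances on the two sides match.

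First I will prove \eqref{eq_difference_v}. Starting from the lattice expression $\bv^k_\bi = 2^{-k}[\be^1,\be^2]\bi$ and $\bv^{k+1}_\bl = 2^{-(k+1)}[\be^1,\be^2]\bl$, I substitute the decomposition $\bl = 2\dot\bl + \bar\bl$ and use the linearity of the matrix $[\be^1,\be^2]$ to split
\[
\bv^{k+1}_\bl = 2^{-(k+1)}[\be^1,\be^2](2\dot\bl+\bar\bl) = 2^{-k}[\be^1,\be^2]\dot\bl + 2^{-(k+1)}[\be^1,\be^2]\bar\bl.
\]
Subtracting $\bv^k_\bi = 2^{-k}[\be^1,\be^2]\bi$ and regrouping yields
\[
\bv^{k+1}_\bl - \bv^k_\bi = 2^{-(k+1)}[\be^1,\be^2]\bar\bl - 2^{-k}[\be^1,\be^2](\bi-\dot\bl) = \bv^{k+1}_{\bar\bl} - \bv^k_{\bi-\dot\bl},
\]
which is \eqref{eq_difference_v}.

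For the first claim, I then take Euclidean norms in \eqref{eq_difference_v}. By the definition of the stencil in \eqref{eq_stencil_definition2}, $\bi\in\cB^{k+1,\bl}$ is equivalent to $\|\bv^{k+1}_\bl - \bv^k_\bi\| < L$, which by the identity just proved is equivalent to $\|\bv^{k+1}_{\bar\bl} - \bv^k_{\bi-\dot\bl}\| < L$, i.e.\ to $\bi-\dot\bl\in\cB^{k+1,\bar\bl}$. This gives the set equality $\cB^{k+1,\bl} = \cB^{k+1,\bar\bl} + \dot\bl$.

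Since this is essentially a translation-invariance argument for a lattice, I do not anticipate any genuine obstacle: the only thing one must be careful about is the bookkeeping between the scaling $2^{-(k+1)}$ on the new level and $2^{-k}$ on the old level, which is exactly what motivates the decomposition $\bl = 2\dot\bl + \bar\bl$ with $\bar\bl\in\{0,1\}^2$ (representative of the parity class). Everything else follows from linearity of $[\be^1,\be^2]$ and isometry invariance of the norm.
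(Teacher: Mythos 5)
Your proof is correct and is precisely the computation the paper leaves implicit: the published proof consists of the single sentence that the claim ``straightforwardly follows from \eqref{grid}'', and your derivation of \eqref{eq_difference_v} from the lattice expression $\bv^k_\bi = 2^{-k}[\be^1,\be^2]\bi$ together with the decomposition $\bl = 2\dot\bl+\bar\bl$, followed by taking norms to get the stencil shift, is exactly that argument spelled out. No gaps.
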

\begin{proof}
	The proof straightforwardly follows from \eqref{grid}.
\end{proof}

{ The next lemma discusses other important properties of uniform grids.}

\begin{lem} \label{lem:sum_weights}
	Let $k\geq0$, $\bl\in\Z^2$. On a uniform grid, let $\{\bv^k_\bi \}_{\bi\in\cB^{k+1,\bl}}$ be associated to the $L$-ball of the vertex $\bv^{k+1}_\bl$ and
	let $\{w^{k+1,\bl}_\bi\}_{\bi\in\cB^{k+1,\bl}}$ be the corresponding weights. Then,
\[
	w^{k+1,\bl}_\bi = w^{k+1,\bl}_{\bl - \bi}\quad \hbox{and}\quad	\sum_{\bi\in\cB^{k+1,\bl}} w^{k+1,\bl}_\bi (\bv^k_\bi - \bv^{k+1}_\bl) = 0.
	\]
\end{lem}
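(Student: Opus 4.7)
The plan is to exploit the point-symmetry of a uniform grid about the refinement vertex $\bv^{k+1}_\bl$. Using the lattice description in \eqref{grid}, I would first verify the reflection identity
\begin{equation*}
2\bv^{k+1}_\bl - \bv^k_\bi \;=\; 2\cdot 2^{-(k+1)}[\be^1,\be^2]\bl - 2^{-k}[\be^1,\be^2]\bi \;=\; 2^{-k}[\be^1,\be^2](\bl - \bi) \;=\; \bv^k_{\bl-\bi},
\end{equation*}
which says that reflection through $\bv^{k+1}_\bl$ sends the level-$k$ vertex indexed by $\bi$ to the level-$k$ vertex indexed by $\bl - \bi$. In particular $\|\bv^k_\bi - \bv^{k+1}_\bl\| = \|\bv^k_{\bl-\bi} - \bv^{k+1}_\bl\|$, so by the stencil definition \eqref{eq_stencil_definition2} the map $\bi \mapsto \bl - \bi$ is a well-defined involution of $\cB^{k+1,\bl}$ (well-defined because $\bl,\bi\in\Z^2$ implies $\bl-\bi\in\Z^2$, and stencil-preserving because reflections are isometries).

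The first identity now follows immediately from the weight formula in \eqref{eq_stencil_definition2}: since $w^{k+1,\bl}_\bi$ depends on $\bi$ only through the distance $\|\bv^{k+1}_\bl - \bv^k_\bi\|$, the equality of distances just displayed yields $w^{k+1,\bl}_\bi = w^{k+1,\bl}_{\bl-\bi}$.

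For the vanishing of the weighted sum, I would reindex by the involution. Changing variables via $\bj = \bl - \bi$ and then applying the weight symmetry and the reflection identity gives
\begin{align*}
\sum_{\bi\in\cB^{k+1,\bl}} w^{k+1,\bl}_\bi\,(\bv^k_\bi - \bv^{k+1}_\bl)
&= \sum_{\bj\in\cB^{k+1,\bl}} w^{k+1,\bl}_{\bl-\bj}\,(\bv^k_{\bl-\bj} - \bv^{k+1}_\bl) \\
&= \sum_{\bj\in\cB^{k+1,\bl}} w^{k+1,\bl}_{\bj}\,\bigl(-(\bv^k_{\bj} - \bv^{k+1}_\bl)\bigr) \\
&= -\sum_{\bj\in\cB^{k+1,\bl}} w^{k+1,\bl}_{\bj}\,(\bv^k_{\bj} - \bv^{k+1}_\bl),
\end{align*}
so the sum equals its own negative and hence vanishes.

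No real obstacle is anticipated: the whole argument is elementary once the reflection identity $2\bv^{k+1}_\bl - \bv^k_\bi = \bv^k_{\bl-\bi}$ is in place. The mildly delicate point is just to confirm that the reflection preserves the strict inequality defining the $L$-ball in \eqref{eq_stencil_definition2}, which it does because reflections are isometries.
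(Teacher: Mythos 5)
Your proof is correct and follows essentially the same route as the paper's: both rest on the reflection identity $\bv^k_{\bl-\bi}-\bv^{k+1}_\bl = -(\bv^k_\bi-\bv^{k+1}_\bl)$ derived from \eqref{grid}, which gives the weight symmetry and then kills the weighted sum. The only cosmetic difference is that the paper cancels terms pairwise while you reindex the whole sum by the involution $\bi\mapsto\bl-\bi$ and conclude it equals its own negative; the latter is marginally tidier (it avoids any worry about a self-paired index), but it is the same argument.
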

\begin{proof}
	Let $\bi\in\cB^{k+1,\bl}$ be. By the vertices definition \eqref{grid}, we deduce that
	\[
	\bv^k_{\bl-\bi} - \bv^{k+1}_\bl = \bv^{k+1}_\bl -\bv^k_{\bi}.
	\]
	Then, both $\bv^k_\bi,\bv^k_{\bl-\bi}$ are at the same distance from $\bv^{k+1}_\bl$, thus $\bl-\bi\in\cB^{k+1,\bl}$ and
	$$w^{k+1,\bl}_\bi = w^{k+1,\bl}_{\bl-\bi}.$$
	Then
	$$w^{k+1,\bl}_\bi (\bv^k_\bi-\bv^{k+1}_\bl) + w^{k+1,\bl}_{\bl-\bi} (\bv^k_{\bl-\bi}-\bv^{k+1}_\bl) = 0.$$
	Summing over all the vertices in the ball, we obtain the result.
\end{proof}

{ Lemmas \ref{lem_parity} and \ref{lem:sum_weights} are next used in Theorem \ref{tmh:uniform} to prove that, when dealing with uniform grids, the subdivision schemes are uniform, level-independent and with
positive coefficients. The latter are sufficient conditions for convergence, as expressed in Corollary \ref{coro41} which concludes the section.}

\begin{thm} \label{tmh:uniform}
	On a uniform grid, any subdivision scheme of the family based on the rules in \eqref{eq_rule_uniform} is uniform, level-independent and its coefficients are positive.
\end{thm}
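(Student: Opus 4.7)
The statement decomposes into three claims (validity of the simplified rule, uniformity together with level-independence, and positivity), and the whole argument is essentially bookkeeping on top of the two preceding lemmas.

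My first step would be to verify that on a uniform grid the scheme actually reduces to the rule \eqref{eq_rule_uniform}. Lemma \ref{lem:sum_weights} provides the identity $\sum_{\bi\in\cB^{k+1,\bl}} w^{k+1,\bl}_\bi (\bv^k_\bi - \bv^{k+1}_\bl) = 0$, which is precisely the hypothesis \eqref{eq_sum_zero} of Remark \ref{rmk_uniform_case_rule}\ref{rmk_uniform_case_rule_2}; consequently the general rule \eqref{newrules} collapses to \eqref{eq_rule_uniform}. Positivity of the coefficients $\alpha^{k+1,\bl}_\bi = w^{k+1,\bl}_\bi / \sum_\bj w^{k+1,\bl}_\bj$ is then immediate, since every weight is strictly positive because $W$ takes values in $(0,1]$.

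For uniformity and level-independence, I would decompose $\bl = 2\dot\bl + \bar\bl$ with $\dot\bl\in\Z^2$ and $\bar\bl\in\{0,1\}^2$ as in Lemma \ref{lem_parity}, and substitute the lattice expression \eqref{grid} into \eqref{eq_W}. A short computation gives
\begin{equation*}
\frac{\|\bv^{k+1}_\bl - \bv^k_\bi\|}{2^{-k}L} \;=\; \frac{\bigl\|[\be^1,\be^2]\bigl(\bar\bl - 2(\bi - \dot\bl)\bigr)\bigr\|}{2L},
\end{equation*}
so the argument of $W$ depends neither on $k$ nor on $\dot\bl$, but only on the parity $\bar\bl$ and the shifted index $\bj := \bi - \dot\bl$. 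Combining this with Lemma \ref{lem_parity}, which identifies $\cB^{k+1,\bl}$ with the shift $\cB^{k+1,\bar\bl}+\dot\bl$, the change of summation variable $\bj = \bi - \dot\bl$ in \eqref{eq_rule_uniform} yields a coefficient $\alpha^{k+1,\bl}_\bi$ indexed purely by the parity class $\bar\bl$ and the relative position $\bj$, independent of $k$ and of $\dot\bl$. This is exactly uniformity and level-independence.

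There is no deep technical obstacle; the only subtlety is tracking the factors of $2^{-k}$ and $2^{-(k+1)}$ so as to see that the $k$-dependence cancels inside the argument of $W$. As a shortcut, one could instead invoke the 0-homogeneity property recorded in Remark \ref{rmk_uniform_case_rule}\ref{item_0_homogeneous} (with scaling factor $h=2$) to reach the same conclusion without the explicit computation, but I would prefer writing the cancellation out, since it both motivates Lemma \ref{lem_parity} and makes the shift-invariant structure of the subdivision mask visible.
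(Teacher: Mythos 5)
Your proposal is correct and follows essentially the same route as the paper: Lemma \ref{lem:sum_weights} to justify the reduction to \eqref{eq_rule_uniform}, positivity from the positivity of the weights, and the parity decomposition $\bl=2\dot\bl+\bar\bl$ with the cancellation of the $2^{-k}$ factors inside the argument of $W$ (your explicit formula is exactly the paper's chain $w^{k+1,\bl}_{\bi}=w^{1,\bar\bl}_{\bi-\dot\bl}$ written out). No gaps.
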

\begin{proof}
	Due to Lemma \ref{lem:sum_weights}, we meet the requirements of Remark \ref{rmk_uniform_case_rule}-\eqref{rmk_uniform_case_rule_2}. Thus, the subdivision rules are given by:
	\begin{equation}\label{somme}
		z_\ell^{k+1} = \sum_{i\in\cB^{k+1,\ell}} \alpha^{k+1,\ell}_i z^k_{i}, \quad \alpha^{k+1,\ell}_i = \frac{w^{k+1,\ell}_{i}}{
		\displaystyle{\sum_{i\in\cB^{k+1,\ell}}}w^{k+1,\ell}_{i}
		}.
	\end{equation}
	Since the coefficients only depend on the weights, they are positive. Moreover, as a consequence of Lemma \ref{lem_parity}, the weights $w^{k+1,\bl}_i$ only depend on the parity of $\bl$, say on $\bar \bl\in\{0,1\}^2$: For each $\bl = 2\dot\bl + \bar\bl\in\Z^2$ and $\bi\in\cB^{k+1,\bl}$, we have that
	\[
	w^{k+1,\bl}_{\bi} \overset{\eqref{eq_stencil_definition2}}= W\left (\frac{\|\bv^{k+1}_{\bl} -\bv^k_{\bi}\|}{2^{-k}L}\right ) \overset{\eqref{eq_difference_v}}{=} W\left(\frac{\|\bv^{k+1}_{\bar\bl} - \bv^k_{\bi - \dot\bl}\|}{2^{-k}L}\right) \overset{\eqref{grid}}{=} W\left(\frac{\|\bv^{1}_{\bar\bl} - \bv^0_{\bi - \dot\bl}\|}{L}\right) = w^{1,\bar\bl}_{\bi - \dot\bl}.
	\]
	In other words, only the weights $w^{1,\bar\bl}_{\bj}$, $\bj\in\cB^{k+1,\bar \bl}$, $\bar \bl\in\{0,1\}^2$ are crucial meaning that the scheme is uniform and level-independent.
\end{proof}

\begin{cor}\label{coro41}
	On a uniform grid, any subdivision scheme of the family based on the rules in \eqref{eq_rule_uniform} is convergent.
\end{cor}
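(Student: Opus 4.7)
The plan is to combine the structural properties established in Theorem \ref{tmh:uniform} with the reproduction of linear polynomials built into the weighted least squares construction, and then invoke the standard convergence criterion for stationary subdivision schemes on regular lattices. From Theorem \ref{tmh:uniform}, the scheme $S$ reduces on the uniform grid to a stationary, uniform bivariate scheme whose mask, partitioned into the four parity classes $\bar\bl\in\{0,1\}^2$, consists of positive coefficients summing to one. In particular, each refinement step is a convex combination of existing values, so $\min_\bi z^k_\bi \leq z^{k+1}_\bl \leq \max_\bi z^k_\bi$. I would then verify that $S$ reproduces linear polynomials: if $z^0_\bl = p(\bv^0_\bl)$ for some $p\in\Pi_1$, the weighted least squares problem \eqref{WLS} admits $p$ itself as unique minimizer with zero residual (the uniqueness is guaranteed by Remark \ref{rmk_well_defined}-\eqref{rmk_well_defined_collinear}), so that evaluating at $\bv^{k+1}_\bl$ returns $z^{k+1}_\bl = p(\bv^{k+1}_\bl)$.

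Next, I would apply the classical convergence theorem for stationary subdivision: a bivariate scheme on $\Z^2$ reproducing $\Pi_1$ is uniformly convergent whenever the associated second-order difference scheme $S_{\Delta^2}$ is contractive in the $\ell^\infty$ norm (see, e.g., Dyn's survey or Cavaretta--Dahmen--Micchelli). Using linear polynomial reproduction, any second difference of $\bz^{k+1}$ can be written as a weighted sum of second differences of $\bz^{k}$, because first differences of an affine function vanish and only the "curvature" part of the data propagates through the rules \eqref{eq_rule_uniform}. Positivity of the mask turns this weighted sum into a convex-combination-type expression; contractivity with a rate $\lambda<1$ should then follow from the overlap of the $L$-balls of adjacent new vertices on the lattice, which forces a strict reduction of the total weight when opposing differences partially cancel.

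With contractivity of $S_{\Delta^2}$ in hand, a standard telescoping argument shows that the piecewise-linear interpolants $F_k$ of $\{(\bv^k_\bl, z^k_\bl)\}$ form a Cauchy sequence in $C(\R^2,\R)$ under the sup-norm on compact sets, converging uniformly to a continuous $F$ with $\sup_\bl |z^k_\bl - F(\bv^k_\bl)|\to 0$. The main obstacle, in my view, is exhibiting the explicit rate $\lambda<1$: positivity and linear polynomial reproduction alone would only give $\lambda\leq 1$, and the strict inequality must be extracted from the geometric configuration of the $L$-balls, using that the stencils of two adjacent refined vertices share a non-trivial common core of old vertices so that the representation of second differences of $\bz^{k+1}$ as a combination of second differences of $\bz^k$ has total mass strictly less than one. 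This geometric step is where the lower bound \eqref{L} on $L$ and the strict positivity of the weight function $W$ become crucial; without them the required cancellation could degenerate.
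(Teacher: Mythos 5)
Your proposal does not follow the paper's route, and as written it contains a genuine gap. The paper's own proof is a two-line appeal to the theory of subdivision schemes with nonnegative masks: by Theorem \ref{tmh:uniform} the scheme on a uniform grid is stationary and uniform with positive coefficients that sum to one on each parity class, and convergence then follows from the characterization in \cite{JiaZhou}. You instead set out to prove convergence via the classical derived-scheme criterion, and the decisive step of that argument --- contractivity of the difference scheme --- is exactly the step you leave unproven. You say the strict rate $\lambda<1$ ``should then follow from the overlap of the $L$-balls of adjacent new vertices,'' and you yourself flag this as the main obstacle. That is not a proof sketch with a routine step omitted; it is the entire content of the convergence claim in your approach. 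Positivity of the mask plus partition of unity gives only $\lambda\leq 1$, as you note, and turning the geometric overlap of stencils into a quantitative contraction bound uniform over all parity classes and all difference directions is a nontrivial computation that depends on $L$ and on $W$. Without it your argument establishes boundedness of the refined data (the convex-combination property) but not convergence.

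Two further remarks. First, for mere uniform ($C^0$) convergence the relevant criterion is contractivity of the \emph{first}-difference scheme, which requires only reproduction of constants; your invocation of the second-difference scheme and of $\Pi_1$-reproduction is the criterion for $C^1$ regularity and is stronger than what the corollary asserts. Second, the route the paper takes buys precisely what your route lacks: the Jia--Zhou result converts nonnegativity of the mask (plus a mild support condition) directly into convergence without ever exhibiting a contraction rate, which is why Theorem \ref{tmh:uniform} --- establishing stationarity, uniformity and positivity --- is all the preparation the paper needs. If you wish to complete your argument instead, you must either carry out the stencil-overlap estimate explicitly for the rules \eqref{eq_rule_uniform}, or replace that step by citing a nonnegative-mask convergence theorem, at which point you have rejoined the paper's proof.
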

\begin{proof}
	Since the scheme is uniform, level-independent and all its coefficients are positive and due to \eqref{somme} sum up to $1$, the convergence is guaranteed, according to \cite{JiaZhou}.
\end{proof}

\subsection{Polynomial reproduction}

\medskip The simplest reproduction property of our new subdivision schemes based on the linear operators $S^{k}_{T^k}$  in \eqref{linear},  is their  $\Pi_1$-stepwise-reproduction, independently of the grid geometry.

\begin{prop} \label{prop:reproduction}
	Any subdivision scheme of the family presented in Section \ref{sec:subdivision_rules} stepwise reproduces $\Pi_1$, meaning that
	\[
	S^{k}_{T^k} \left( p|_{V^{k}} \right) = p|_{V^{k+1}}, \qquad \forall k\geq 0, \ \forall p\in\Pi_{1},
	\]
	which implies that the subdivision process converges to $p$ when the initial data is $p|_{V^{0}}$.
\end{prop}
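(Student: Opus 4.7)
The proof will exploit the fact that the refinement rules in \eqref{newrules} are, by construction, obtained by solving the weighted least squares problem \eqref{WLS} and evaluating the resulting polynomial at the insertion point. The plan is therefore to argue that, when the sampled data come from a linear polynomial, the least squares fit must return exactly that polynomial, and so the evaluation recovers its value at the new vertex.

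More concretely, fix $k\geq 0$, $\ell\in\{1,\ldots,N^{k+1}\}$ and $p\in\Pi_1$, and suppose $z^k_j = p(\bv^k_j)$ for all $j\in\cB^{k+1,\ell}$. Write $p(x,y) = b_0 + b_1 x + b_2 y$. Then the objective functional in \eqref{WLS} satisfies
\[
\sum_{i\in\cB^{k+1,\ell}} w^{k+1,\ell}_i \bigl(b_0 + b_1 x_i + b_2 y_i - z^k_i\bigr)^2 = 0,
\]
so $(b_0,b_1,b_2)$ is a minimizer. To conclude that it is \emph{the} minimizer, I need the normal equations matrix to be non-singular, which is equivalent to saying that there exist three non-collinear points among $\{\bv^k_j\}_{j\in\cB^{k+1,\ell}}$. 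By the choice of $L$ made in \eqref{L} and Remark \ref{rmk_well_defined}-\eqref{rmk_well_defined_collinear}, the $L$-ball $\cB^{k+1,\ell}$ always contains the three vertices of a face of $T^k$, hence such a triple exists. This is the only delicate point of the argument.

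Once uniqueness is established, the linear polynomial $\hat p_1$ fitted by the scheme equals $p$ identically, and evaluating at $\bv^{k+1}_\ell$ gives $z^{k+1}_\ell = p(\bv^{k+1}_\ell)$. Since $\ell$ was arbitrary, this proves $S^k_{T^k}(p|_{V^k}) = p|_{V^{k+1}}$.

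Finally, iterating this identity starting from the initial data $\bz^0 = p|_{V^0}$ yields $\bz^k = p|_{V^k}$ for every $k\geq 0$. Because the meshes $T^k$ become dense in the convex hull of $V^0$ (indeed, the triangulation diameter halves at each iteration, as noted after \eqref{L}) and $p$ is continuous, the limit function of the subdivision process coincides with $p$, which is the convergence claim at the end of the statement.
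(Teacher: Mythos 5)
Your proof is correct and follows essentially the same route as the paper's: the data sampled from $p\in\Pi_1$ make the weighted least squares residual zero, so the fitted polynomial is $p$ itself and evaluation at the new vertex returns $p(\bv^{k+1}_\ell)$. You merely make explicit the uniqueness step (non-singularity of the normal equations via the face contained in each $L$-ball, per Remark \ref{rmk_well_defined} and condition \eqref{L}), which the paper's proof leaves implicit.
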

\begin{proof}
	If $\bz^k = p|_{V^k}$, then the polynomials solving the least squares problems at the $k$ iteration must be $p$. Since $\bz^{k+1}$ is obtained by evaluating those polynomials at the vertices of $V^{k+1}$, then $\bz^{k+1} = p|_{V^{k+1}}$.
\end{proof}

\subsection{Approximation order}

Along this section, we will apply scaling operations to the grid in order to observe the approximation capability of the scheme. Let $T = (V,E)$ be any triangulation and let $T_h = (h V,E)$  be the same triangulation but
with all vertices scaled by a factor $h>0$ and $T_h^k = (h V^k, E^k)$ be the grid resulting after $k$ grid-refinements. By Lemma \ref{item_0_homogeneous}, we have that the scheme is $0$-homogeneous with respect to the scaling of the grid, meaning that $S_{T_h} = S_{T}$, for any $h>0$, provided that $L$ is scaled by the same factor, which we assume. Thus, 
we will simply write 
$S_k$ instead of $S_{T_h^k} = S_{T^k}$, for any $k\geq 0$.
Let us further simplify the notation by denoting $F_k := F|_{h V^k}$.

\begin{lem} \label{lem:step_approximation}
	Let $F:\R^{2}\lra\R$ be a $\cC^{2}$ function with bounded derivatives.
	Any subdivision scheme of the family presented in Section \ref{sec:subdivision_rules} has second order approximation, stepwisely, meaning that
	\[
	\exists C(V^0,F)>0, \, h_0(V^0,F)>0 \ : \ \| S_{k} F_k - F_{k+1} \|_\infty \leq C \left(2^{-k}h\right)^2, \quad \forall h<h_0, \ \forall k\geq 0.
	\]
\end{lem}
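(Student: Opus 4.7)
The plan is to exploit the $\Pi_1$-reproduction property (Proposition \ref{prop:reproduction}) together with a second-order Taylor expansion of $F$ at the target vertex. Because the scheme is $0$-homogeneous with respect to the simultaneous scaling of the grid and of $L$ (Remark \ref{rmk_uniform_case_rule}-\eqref{item_0_homogeneous}), the coefficients $\alpha^{k+1,\ell}_\bi$ do not change when we scale $V^k$ by $h$, and moreover the reproduction property transfers from $T^k$ to $T^k_h$ since $p(h\,\cdot)\in\Pi_1$ whenever $p\in\Pi_1$. The $L$-ball used to compute the rule for the vertex $\bv^{k+1}_\ell$ of $T^k_h$ has radius $2^{-k}Lh$, so any $\bv^{k}_{\bi}$ involved in the rule satisfies $\|h\bv^{k}_{\bi}-h\bv^{k+1}_{\ell}\|<2^{-k}Lh$.

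Fix $k\geq 0$ and an index $\ell$ at level $k+1$, and set $\hat\bv:=h\bv^{k+1}_\ell$. Since $F\in\cC^2(\R^2,\R)$ with bounded second derivatives, Taylor's formula gives, for each $\bi\in\cB^{k+1,\ell}$,
\begin{equation*}
F(h\bv^{k}_{\bi})= F(\hat\bv)+\nabla F(\hat\bv)\cdot(h\bv^{k}_{\bi}-\hat\bv)+R^{k+1,\ell}_\bi,\quad |R^{k+1,\ell}_\bi|\leq \tfrac{M}{2}\|h\bv^{k}_{\bi}-\hat\bv\|^{2}\leq \tfrac{M}{2}(2^{-k}Lh)^2,
\end{equation*}
where $M$ is a bound on the norm of the Hessian of $F$. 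Applying the refinement rule \eqref{newrules} to $F_k$ and using the $\Pi_1$-reproduction (applied to the constant $1$ and to the two coordinate functions shifted by $\hat\bv$), the constant and linear parts drop out, leaving
\begin{equation*}
(S_{k}F_{k})_\ell - F(\hat\bv)= (S_{k}F_{k})_\ell - F_{k+1}(\ell)= \sum_{\bi\in\cB^{k+1,\ell}} \alpha^{k+1,\ell}_\bi R^{k+1,\ell}_\bi,
\end{equation*}
so that
\begin{equation*}
\big|(S_{k}F_{k})_\ell-F_{k+1}(\ell)\big|\leq \tfrac{M}{2}(2^{-k}Lh)^2 \, \Lambda_{k,\ell},\qquad \Lambda_{k,\ell}:=\sum_{\bi\in\cB^{k+1,\ell}}|\alpha^{k+1,\ell}_\bi|.
\end{equation*}

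The main obstacle is therefore to bound $\Lambda:=\sup_{k,\ell}\Lambda_{k,\ell}$ by a constant depending only on $V^0$ (and $L$, $W$). Here one uses the semi-regular structure of the refinement, discussed at the beginning of the section: after the first subdivision step, each initial face is refined into a uniform patch; moreover, by $0$-homogeneity, each local configuration $\{(\bv^k_\bi-\bv^{k+1}_\ell):\bi\in\cB^{k+1,\ell}\}$ is equivalent, up to a global scaling, to a configuration drawn from a \emph{finite} list, consisting of (i) the configurations arising in the interior of each initial face (regular uniform patches), (ii) the configurations at the interface of two adjacent patches, and (iii) the configurations surrounding each extraordinary vertex, which stabilize once the iteration has isolated extraordinary vertices from one another. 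Since each of these configurations admits a well-defined rule (Remark \ref{rmk_well_defined}-\eqref{rmk_well_defined_collinear}) with finitely many coefficients, $\Lambda<\infty$. Setting $C:=\tfrac{M L^2 \Lambda}{2}$ (which depends on $V^0$ through $\Lambda$ and on $F$ through $M$) and taking $h_0$ to be any value below which the ball condition \eqref{L} remains valid (its scaling-invariance actually allows any $h_0$), the claim $\|S_k F_k-F_{k+1}\|_\infty\leq C(2^{-k}h)^2$ follows by taking the supremum over $\ell$.
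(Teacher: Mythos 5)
Your proof is correct and follows essentially the same route as the paper, whose own proof is only a two-sentence sketch citing the halving of edge lengths and the exactness of degree-$1$ regression; your Taylor expansion combined with $\Pi_1$-reproduction and the uniform bound on $\sum_{\bi}|\alpha^{k+1,\ell}_{\bi}|$ via the finitely many local configurations (up to $0$-homogeneous rescaling) is precisely the intended argument, written out in full.
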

\begin{proof}
	This result follows from two key observations. First, each triangle in $T_h^k$ is subdivided into four \emph{similar} triangles to form $T^{k+1}_h$, each having edges that are half the previous length, thereby producing a \emph{semiregular} triangulation (see \cite{DAUBECHIES}). Second, since local degree-$1$  polynomial regression is applied, $C$ is a constant that depends on $V^0$ and on the bounds on the second derivatives of $F$.
\end{proof}

We continue with a result dealing with the approximation error in the limit of the subdivision process, assuming it is convergent.
\begin{prop} \label{prop:approximation}
	Let $F:\R^{2}\lra\R$ be a $\cC^{2}$ function with bounded derivatives. If a scheme in Section \ref{sec:subdivision_rules} convergences for any data attached to $T^0$, then it has second order of approximation, meaning that
	\[
	\exists C_1(V^0,F)>0, \, h_0(V^0,F)>0 \ : \ \left \| \left( \prod_{l=0}^k S_{l} \right) F_0 - F_{k+1} \right \|_\infty \leq C_1 h^2, \qquad \forall h<h_0, \ \forall k\geq 0.
	\]
\end{prop}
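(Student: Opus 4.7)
\emph{Plan.} The approach is a telescoping decomposition of the global approximation error into a sum of stepwise errors, each of order $O((2^{-j}h)^2)$ by Lemma~\ref{lem:step_approximation}, with the accumulation controlled by a uniform bound on the multi-level subdivision operators implied by the convergence hypothesis.

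First, I would set $G_0 := F_0$ and $G_{j+1} := S_j G_j$, so that $G_{k+1} = \bigl(\prod_{l=0}^{k} S_l\bigr) F_0$ is the left-hand side of the estimate. Starting from the identity
\begin{equation}
G_{j+1} - F_{j+1} = S_j (G_j - F_j) + (S_j F_j - F_{j+1}),
\end{equation}
a one-line induction on $j$ (using $G_0 - F_0 = 0$) produces the telescoping formula
\begin{equation}
G_{k+1} - F_{k+1} = \sum_{j=0}^{k} \bigl(S_k S_{k-1} \cdots S_{j+1}\bigr)\bigl(S_j F_j - F_{j+1}\bigr),
\end{equation}
where the composition is read as the identity when $j=k$. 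Taking $\ell^\infty$-norms, inserting the stepwise bound $\|S_j F_j - F_{j+1}\|_\infty \leq C(2^{-j}h)^2$, and summing the geometric series $\sum_{j\geq 0} 4^{-j} = 4/3$, the claim would follow as soon as a constant $M>0$ satisfying
\begin{equation}
\sup_{k \geq j \geq 0} \|S_k S_{k-1} \cdots S_{j+1}\|_\infty \leq M
\end{equation}
is available; the bound would then hold with $C_1 := \tfrac{4MC}{3}$ and the same threshold $h_0$ as in Lemma~\ref{lem:step_approximation}.

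The main obstacle is securing this uniform operator bound. For the full-history case ($j=-1$) it is immediate from Banach--Steinhaus applied in $\ell^\infty(V^0)$, since the convergence hypothesis makes the family $\bigl\{\prod_{l=0}^{k} S_l\bigr\}_{k}$ pointwise bounded. Extending uniformity to arbitrary tails is more delicate, and I would combine two ingredients: the $0$-homogeneity recalled in Remark~\ref{rmk_uniform_case_rule}-\eqref{item_0_homogeneous}, which allows each tail composition to be rescaled by a factor $2^{j+1}$ so that all operators act at a common geometric scale, and the semi-regular structure of the refinements, whereby the combinatorial configuration of extraordinary vertices stabilises after level~$1$ and each patch is uniform. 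Together these reduce the uniformity in $j$ to a fixed family of convergent schemes of the same type as the original, to which Banach--Steinhaus applies once more to yield the desired $M$. Granting this step, the computation outlined above concludes the proof.
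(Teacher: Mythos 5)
Your proposal is correct and follows essentially the same route as the paper: your telescoping identity is exactly the closed form of the paper's induction on the tail length $k_1$, and your uniform operator bound $M$ on the compositions $S_k\cdots S_{j+1}$ plays the role of the paper's constant $D$ in its bound \eqref{eq:boundS}, which the paper likewise derives from linearity and convergence (and uses, as you do, applied to the stepwise residual $S_jF_j-F_{j+1}$ rather than to $F_j$ itself). Your additional remarks on why uniformity over all tails (not just full histories) requires care are a fair observation about a point the paper passes over quickly, but they do not change the structure of the argument.
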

\begin{proof}
	Due the linearity and convergence of the scheme, we know that
	\begin{equation} \label{eq:boundS}
		\exists D\geq 1 : \ \left \| \left( \prod_{l=k-k_1}^{k} S_{l} \right) F_{k-k_1}\right \|_\infty \leq D \left \| F_{k-k_1}\right \|_\infty, \qquad \forall 0 \leq k_1 \leq k.
	\end{equation}
	Given $k\geq 0$, we will prove by induction on $k_1$ that
	\begin{equation} \label{eq_induction}
		\left \| \left( \prod_{l=k-k_1}^k S_{l} \right) F_{k-k_1} - F_{k+1} \right \|_\infty \leq C D h^2\sum_{l=k-k_1}^{k} 4^{-l}, \qquad 0\leq k_1\leq k,
	\end{equation}
	where $C(V,F)$ is the constant of Lemma \ref{lem:step_approximation}. If we do so, the claim will be proven taking $C_1 = \frac43 CD$ and $k_1 = k$.
	
	First, observe that the case $k_1=0$ consists in asserting that
	\[
	\left \| S_k F_{k} - F_{k+1} \right \|_\infty \leq C D h^2 4^{-k},
	\]
	which certainly holds by Lemma \ref{lem:step_approximation} and the fact that $D\geq 1$. Next, assuming that \eqref{eq_induction} holds for $k_1-1$, we will prove it for $k_1$. Indeed,
	\begin{align*}
		\left \| \left( \prod_{l=k-k_1}^{k} S_l \right) F_{k-k_1} - F_{k+1} \right \|_\infty
		\leq& \left\| \left( \prod_{l=k-k_1}^{k} S_l \right) F_{k-k_1}-\left( \prod_{l=k-k_1+1}^{k} S_l \right) F_{k-k_1+1}  \right\|_\infty\\
		&+ \left\| \left( \prod_{l=k-k_1+1}^{k} S_l \right) F_{k-k_1+1}  - F_{k+1} \right\|_\infty\\
		=& \left\| \left( \prod_{l=k-(k_1-1)}^{k} S_l \right) \left(S_{k-k_1} F_{k-k_1}- F_{k-(k_1-1)} \right)  \right\|_\infty\\
		&+ \left\| \left( \prod_{l=k-(k_1-1)}^{k} S_l \right) F_{k-(k_1-1)} - F_{k+1} \right\|_\infty.
	\end{align*}
	Now, using \eqref{eq:boundS} and the induction hypothesis, and Lemma \ref{lem:step_approximation} right after, we obtain
	\begin{align*}
		\left \| \left( \prod_{l=k-k_1}^{k} S_l \right) F_{k-k_1} - F_{k+1} \right \|_\infty
		&\leq D \left\| S_{k-k_1} F_{k-k_1}- F_{k-(k_1-1)} \right\|_\infty + C D h^2\sum_{l=k-(k_1-1)}^{k} 4^{-l}\\
		&\leq D C h^2 4^{-(k-k_1)} + C D h^2\sum_{l=k-(k_1-1)}^{k} 4^{-l}
		= C D h^2\sum_{l=k-k_1}^{k} 4^{-l}.
	\end{align*}
\end{proof}

\subsection{Computational cost}

{
We have already observed that on uniform grids, where all vertices are regular and form a lattice, the resulting subdivision schemes are uniform and stationary and, thus, they are very efficient, as all subdivision schemes are. On these types of grids, the number of vertices inside an $L$-ball is basically proportional to the area of a circle with radius $L$, hence $\cO(L^2)$. By uniformity, the subdivision coefficients can be computed only once and quickly obtained using the formula of Remark  \ref{rmk_well_defined}-\eqref{rmk_well_defined_2}.

On general grids, the computational cost is higher. Indeed, on the one hand, the refinement process around a vertex requires identifying the surrounding vertices and triangles, which is an expensive task that certainly needs to be optimized. On the other hand, the subdivision coefficients must be obtained for each new vertex, for example by solving the linear system related to the polynomial regression. In the univariate case investigated in \cite{LY24}, the authors found a much more efficient way to compute the coefficients by observing that they can be obtained through direct polynomial evaluation. Future research will explore whether this property also holds in the bivariate case.

A reference to our MATLAB implementation, though not optimized, can be found in the Reproducibility section.}

\subsection{Denoising capability}

This section is to analyse the noise reduction capability of the schemes presented  in Section \ref{sec:subdivision_rules}. To this purpose, let us consider $\bz^0 = (z^0_i)_{i=1}^{N^0}$ such that
\[ z^0_i = F(\bv^0_i) + \epsilon^0_i,\quad i=1,\ldots,N^0\]
where $F$ is a smooth function and each $\epsilon^0_i$ is a random value that represents noise in the data, which we suppose \emph{independent and identically distributed (iid)} with certain distribution  $X$, that is, $\epsilon^0_i \sim X$, $\forall i \in\{1,\ldots,N^0\}$ where
the expectancy of the noise is zero, that is $\mean(X)=0$.
Due to the linearity of the subdivision operator, we have that
\[
\bz^{k+1} = S^k(\bz^k)  = S^k( \bff^k ) + S^k (\beps^k)  = \bff^{k+1} + \beps^{k+1},\quad \bff^{k+1} := S^k( \bff^k ),\quad \beps^{k+1} := S^k( \beps^k ),
\]
so that we can be studied separately the case when the data is smooth and the data is pure noise.

The first case has been already studied in last subsection.
The second case is studied in the next proposition. Let us denote by $X^{k}_\ell$ the probability distribution of the random value $\beps^k_\ell$. We are going to see that the variance $X^{k}_\ell$ is lesser than the variance of the initial noise $X$, meaning that the subdivision schemes are able to reduce the noise in the data.
\begin{prop}
	If $\beps^0$ is noise iid and the coefficients $\{\alpha^{k+1,\ell}_i\}_{i,\ell,k}$ are non-negative, then
\[
	\var(X^{k}_\ell) \leq \theta \var(X),\]
	where
\begin{equation}\label{theta}
\theta = \max_{\ell\in \{1,\ldots,N^{1}\}}\sum_{i\in\cB^{1,\ell}} (\alpha^{1,\ell}_i)^2
\end{equation} and $\var(X)$ is the variance of $X$.
\end{prop}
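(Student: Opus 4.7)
The plan is to proceed by induction on $k$, exploiting two facts: the hypothesis that the $\alpha^{k+1,\ell}_i$ are non-negative, and the observation that they sum to $1$ for every $k,\ell$. The latter is a consequence of Proposition \ref{prop:reproduction} applied to the constant polynomial $p\equiv 1$: reproduction gives $1=\sum_{i\in\cB^{k+1,\ell}}\alpha^{k+1,\ell}_i\cdot 1$. The statement is implicitly for $k\geq 1$ (for $k=0$ we trivially have $X^{0}_\ell=X$).

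For the base case $k=1$, the random variables $\{\beps^0_i\}$ are iid with variance $\var(X)$, so independence gives directly
\[
\var(\beps^{1}_\ell)=\var\Bigl(\sum_{i\in\cB^{1,\ell}}\alpha^{1,\ell}_i\,\beps^0_i\Bigr)
=\sum_{i\in\cB^{1,\ell}}(\alpha^{1,\ell}_i)^2\var(X)\leq\theta\,\var(X),
\]
by the very definition of $\theta$.

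For the inductive step, assuming $\var(\beps^{k}_m)\leq\theta\var(X)$ for every index $m$, I would expand
\[
\var(\beps^{k+1}_\ell)=\sum_{i,j\in\cB^{k+1,\ell}}\alpha^{k+1,\ell}_i\,\alpha^{k+1,\ell}_j\,\cov(\beps^{k}_i,\beps^{k}_j).
\]
Applying the Cauchy--Schwarz inequality for covariances and the induction hypothesis gives $\cov(\beps^{k}_i,\beps^{k}_j)\leq\sqrt{\var(\beps^{k}_i)\var(\beps^{k}_j)}\leq\theta\,\var(X)$. Because the coefficients $\alpha^{k+1,\ell}_i$ are non-negative (so the inequality is preserved term-by-term) and sum to $1$, it follows that
\[
\var(\beps^{k+1}_\ell)\leq\theta\var(X)\Bigl(\sum_{i\in\cB^{k+1,\ell}}\alpha^{k+1,\ell}_i\Bigr)^{2}=\theta\,\var(X).
\]

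The main subtlety, more than a real obstacle, lies in the inductive step: although $\{\beps^0_i\}$ are independent, the variables $\{\beps^{k}_i\}_i$ at level $k\geq 2$ share common initial samples through overlapping stencils and therefore cease to be independent; hence one cannot simply iterate the clean identity used in the base case. The key observation that bypasses this difficulty is that non-negativity of the $\alpha^{k+1,\ell}_i$ allows us to upper-bound every covariance term uniformly by $\theta\,\var(X)$ via Cauchy--Schwarz without worrying about sign cancellations, and then the partition-of-unity property converts the double sum into the required bound.
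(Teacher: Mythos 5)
Your proof is correct and follows essentially the same route as the paper: an identical base case at $k=1$ exploiting independence of the initial noise, and an inductive step that handles the loss of independence at higher levels through the non-negativity and partition-of-unity of the coefficients. Your expansion of the variance into a double sum of covariances bounded via Cauchy--Schwarz is just an unpacked proof of the convex-combination inequality $\var\bigl(\sum_i\alpha_i Y_i\bigr)\leq\sum_i\alpha_i\var(Y_i)$ that the paper invokes directly; if anything, your version carries the factor $\theta$ through the induction more cleanly, since the paper's own recursive step as displayed concludes only with the weaker bound $\var(X^{k+1}_\ell)\leq\var(X)$.
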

\begin{proof}
	Let us start by checking it for the first step. By \eqref{newrules},
	\[ \beps^1_\ell = \sum_{i\in\cB^{1,\ell}} \alpha^{1,\ell}_i \epsilon^0_{i}.\]
		
	Since we assume that the initial noise $\beps^0$ is uncorrelated, using  \(\var(\sum_i X_i) = \sum_i\var(X_i) + \sum_{i\neq j}\cov(X_i,X_j)\), where $X_i$ are random variables and \(\cov(X_i,X_j)\) is the covariance, and \(\var(\alpha X_i) = \alpha^2 \var(X_i) \), with $\alpha\in\R$, we arrive at

	\[
	\var{(X^1_\ell)} = \sum_{i\in\cB^{1,\ell}} \var(\alpha^{1,\ell}_i X) = \sum_{i\in\cB^{1,\ell}} (\alpha^{1,\ell}_i)^2 \var(X) \leq \theta \var(X).
	\]
	From above, we prove the claim for $k=1$. It is not possible to apply the same argument again for $k > 1$, since the random variables \((X^k_\ell)_{\ell=1}^{N^{k}}\) are correlated (because neighbours $\epsilon^k_\ell,\epsilon^k_j$ are defined from common values of $\beps^{k-1}$). Instead, for $k>1$
	we use that $X^{k+1}_\ell$ is a convex combination of the $X^k$ elements, $X^{k+1}_\ell = \sum_{i\in\cB^{k+1,\ell}} \alpha^{k+1,\ell}_i X^k_i$, since the coefficients are non-negative and sum to one. Thus,
	\[
	\var{(X^{k+1}_\ell)} = \var\left(\sum_{i\in\cB^{k+1,\ell}} \alpha^{k+1,\ell}_i X^k_i\right) \leq \sum_{i\in\cB^{k+1,\ell}}\alpha^{k+1,\ell}_i \var(X^k_i),
	\]
	and now, applying this argument recursively, we obtain
	\[
	\var{(X^{k+1}_\ell)} \leq \sum_{i\in\cB^{k+1,\ell}}\alpha^{k+1,\ell}_i \var(X) = \var(X).
	\]
	\end{proof}
{	
\begin{rmk}
		If $\alpha^{1,\ell}_i>0$ $\forall i,\ell$ and $|\cB^{1,\ell}|>1$ $\forall \ell$ (where $|\cdot|$ tis he cardinality of the set) then it must hold $0< \alpha^{1,\ell}_i < 1$. As a consequence, we have that
		\[
		\theta = \max_\ell \sum_{i\in\cB^{1,\ell}} (\alpha^{1,\ell}_i)^2 < \max_\ell \sum_{i\in\cB^{1,\ell}} \alpha^{1,\ell}_i = 1.
		\]
		This means that the noise reduction is guaranteed, provided that $L$ is large enough to have $|\cB^{1,\ell}|>1$ $\forall \ell$, which is a reasonable assumption. Observe also that this formula can be used to choice $W$ such that $\theta$ is minimized. \end{rmk}
		

We conclude this section mentioning that the noise-reduction properties of different choices of the function $W$ is certainly worth to be investigated as done in \cite{LY24} for the univariate case. Indeed, in \cite{LY24}, the impact of the choice of $W$ on noise reduction is analyzed in detail, including, among other aspects, the computation of the $\theta$ values. Here, Table~\ref{tab:coeff.} lists the values of $\theta$ associated with the different weight functions. From the table, where different values of $L$ are considered, we see that the scheme corresponding to $W(x)=1$ guarantees the greatest noise reduction at each iteration. However, as shown in \cite{LY24}, a high noise-removal capability is compensated by a low approximation.}

\begin{table}[h]
 \centering
 {
	\begin{tabular}{lrrrrr}
	\multicolumn{6}{c}{\text{Rectangular grid}}\\\hline
		 $W(\cdot)$         & $(L=1.6)\,\,$   $\theta$ & $(L=2.5)\,\,$   $\theta$ & $(L=5)\,\,$   $\theta$ & $(L=10)\,\,$   $\theta$ & $(L=20)\,\,$ $\theta$ \\\hline
		 $ 1$           &    1.2500e-01                   &        6.2500e-02               &       1.4493e-02                 &       3.2787e-03                 &    8.0321e-04                \\
         $1-\cdot$          &    2.3035e-01                   &        7.8806e-02               &       1.9427e-02                 &       4.7859e-03                 &    1.1940e-03                    \\
		 $1-\cdot^2$        &    2.2361e-01                   &        7.1389e-02               &       1.7294e-02                 &       4.2551e-03                 &    1.0614e-03                   \\
		 $(1-\cdot^2)^2$     &    2.4915e-01                   &        9.2644e-02               &       2.2937e-02                 &       5.7301e-03                 &    1.4324e-03      \\
		 $(1-\cdot^3)^3$     &    2.4994e-01                   &        9.7569e-02               &       2.3558e-02                 &       5.8849e-03                 &    1.4712e-03      \\
		  $(1-\cdot^2)^3$      &    3.1794e-01                   &        1.1803e-01               &       2.9111e-02                 &       7.2757e-03                 &    1.8189e-03      \\
		 $e^{-\cdot}$       &    1.3191e-01                   &        6.5471e-02               &       1.5307e-02                 &       3.4810e-03                 &    8.5365e-04                   \\\hline  \hline
	\multicolumn{6}{c}{\text{Equilateral grid}}\\\hline	
 $W(\cdot)$         & $(L=1.6)\,\,$   $\theta$ & $(L=2.5)\,\,$   $\theta$ & $(L=5)\,\,$   $\theta$ & $(L=10)\,\,$   $\theta$ & $(L=20)\,\,$ $\theta$ \\\hline
		 $ 1$           &       1.4286e-01                   &           5.2632e-02               &         1.1236e-02                 &       2.7473e-03                 &       6.8871e-04               \\
         $1-\cdot$          &       1.7456e-01                   &           6.7894e-02               &         1.6558e-02                 &       4.1378e-03                 &        1.0341e-03                   \\
		 $1-\cdot^2$        &       1.4889e-01                   &           5.9571e-02               &         1.4688e-02                &        3.6792e-03                 &       9.1931e-04                  \\
		 $(1-\cdot^2)^2$    &       1.9957e-01                   &           7.9818e-02               &         1.9859e-02                &         4.9622e-03                 &      1.2405e-03      \\
		 $(1-\cdot^3)^3$    &       2.1317e-01                   &           8.2085e-02               &         2.0389e-02                &         5.0965e-03                 &      1.2741e-03      \\
		 $(1-\cdot^2)^3$    &       2.5064e-01                   &           1.0097e-01               &         2.5207e-02                &         6.3010e-03                 &      1.5752e-03      \\
		 $e^{-\cdot}$       &       1.5329e-01                   &           5.5776e-02               &         1.1948e-02                &         2.9221e-03                 &      7.3239e-04                   \\\hline
	\end{tabular}}
	\caption{ {
Values of $\theta$ as in eq.\eqref{theta} with different functions
$W$ and  values of $L$; the same functions as those used in \cite{LY24} have been selected.}}
	\label{tab:coeff.}
\end{table}

\section{The new least squares $1$-polynomial approximation subdivision scheme for equilateral and for triangular-rectangular grids}\label{sub:regular}

This section is dedicated to the study of the new $L$-ball least squares subdivision schemes for equilateral and for triangular-rectangular grids. The equilateral grid is a uniform grid with $\be^1 = (1,0)$ and $\be^2 = (\frac12, \frac{\sqrt3}2)$ and the triangular-rectangular grid is a uniform grid with $\be^1,\be^2$ the canonical basis of $\R^2$. For both these special types of grids, in view of Theorem \ref{tmh:uniform}, only four sets of weights (according to the parity of the index) are used to refine the data associated to the replacement and insertion rules, because the weights only depend on the parity of the vertex location. Therefore, the coefficients of the linear combination can be explicitly computed and the resulting subdivision scheme is linear, level-independent (or stationary) and uniform. As a result, we can represent it simply by giving its mask $\ba$ consisting of the whole sets of rules coefficients (see \cite{C2,C3,C4} for all details concerned the use of masks in linear and  uniform subdivision schemes). Below, we specify the masks corresponding to some choices of $L$and of weight functions and show the picture of the corresponding basic limit functions. {In the masks, the indexing of the coefficients such that the circled number
is at position $(0,0)$.}

\subsection{Equilateral grid}

We set $\frac32 < L < \sqrt3$, leading to the stencils shown in Figure \ref{fig:equilateral_rings}. For $W(\cdot)=1$ the bivariate mask is
\begin{equation}
	\ba=\frac 1{70}\left(\begin{array}{cccccccc}
		0 & 0 & 0  &7 &  7 & 7  &7\\
		0  &0  &7  &10& 7  &10& 7\\
		0  &7  &7 & 7 & 7&  7  &7\\
		7  &10 &7  &\circled{10}& 7  &10& 7\\
		7  &7 & 7&  7  &7  &7 & 0\\
		7 & 10 &7 &  10 &7  &0 & 0\\
		7  &7  &7  &7  &0  &0 & 0
	\end{array}
	\right),
\end{equation}
while for $W(\cdot)=1-\cdot$, the mask is
\begin{equation}
	\ba=\left(\begin{array}{ccccccc} 0 & 0 & 0 & a & b & b & a\\ 0 & 0 & b & c & d & c & b\\ 0 & b & d & e & e & d & b\\ a & c & e & \circled{f} & e & c & a\\ b & d & e & e & d & b & 0\\ b & c & d & c & b & 0 & 0\\ a & b & b & a & 0 & 0 & 0 \end{array}\right),
\end{equation}
where $a = \frac{2\,L-3}{g}$, $b=\frac{2\,L-\sqrt{7}}{g}$, $c=\frac{L-1}{7\,L-6}$, $d=\frac{2\,L-\sqrt{3}}{g}$, $e=\frac{2\,L-1}{g}$, $f=\frac{L}{7\,L-6}$, $g=-2\,\left(\sqrt{3}-10\,L+2\,\sqrt{7}+4\right)$. It can be checked that all the coefficients are positive since $\frac32 < L < \sqrt3$, as more generally proved in Theorem \ref{tmh:uniform}.

\begin{figure}[!h]
	\centering
	\begin{tabular}{cccc}
		\resizebox{0.23\textwidth}{!}{
		\input{images/equilateral_ring_r.txt}
		}
		&
		\resizebox{0.23\textwidth}{!}{
		\input{images/equilateral_ring_h.txt}
		}
		&
		\resizebox{0.23\textwidth}{!}{
		\input{images/equilateral_ring_v.txt}
		}
		&
		\resizebox{0.23\textwidth}{!}{
		\input{images/equilateral_ring_d.txt}
		}
	\end{tabular}
{	\caption{
	Stencil selection (red dots) on a equilateral grid with $\frac32 < L < \sqrt{3}$.
	From left to right: replacement, horizontal insertion, vertical insertion and diagonal insertion.
	The green circle corresponds to $L=\frac32$, the blue circle to $L=\sqrt{3}$.}}
	\label{fig:equilateral_rings}
\end{figure}

{ Figure \ref{fig:equilateral_basic_limit} and Figure \ref{fig:equilateral_basic_limit_L20} show the basic limit functions for different values of $L$ and $W$ in case of equilateral grids. From the figures we see that  when we significantly increase $L$, the support tends to be circular and the basic limit function seems to approach a radially symmetric function. In spite of the specific value of $L$, for $W(\cdot)=1$, the inner part is quite flat, while for $W(\cdot)=1-\cdot$ the slope is similar to the slop of a linear function.}

\begin{figure}[!h]
	\centering
	\begin{tabular}{cc}
		\includegraphics[width=0.4\linewidth,clip]{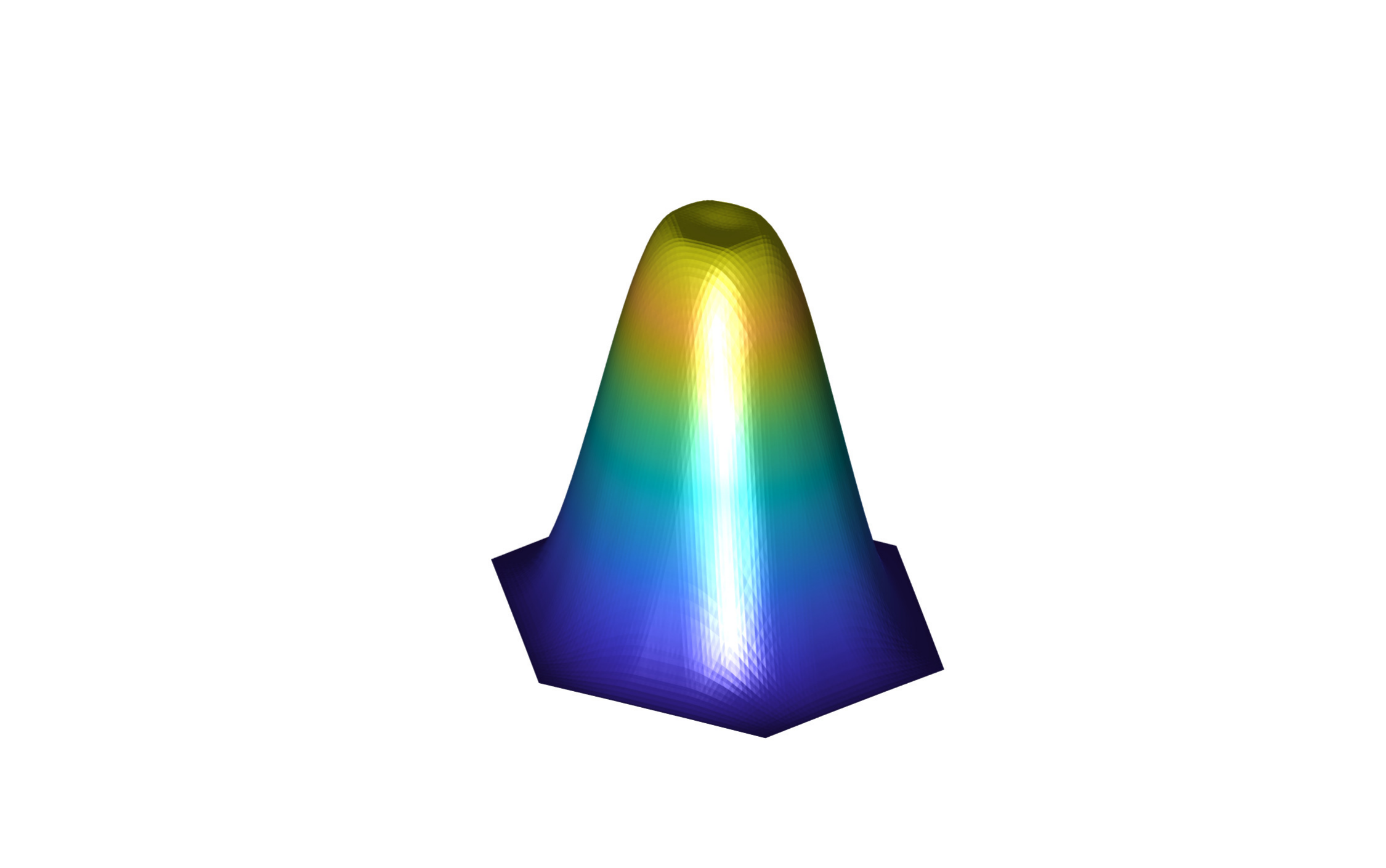} &
		\includegraphics[width=0.4\linewidth,clip]{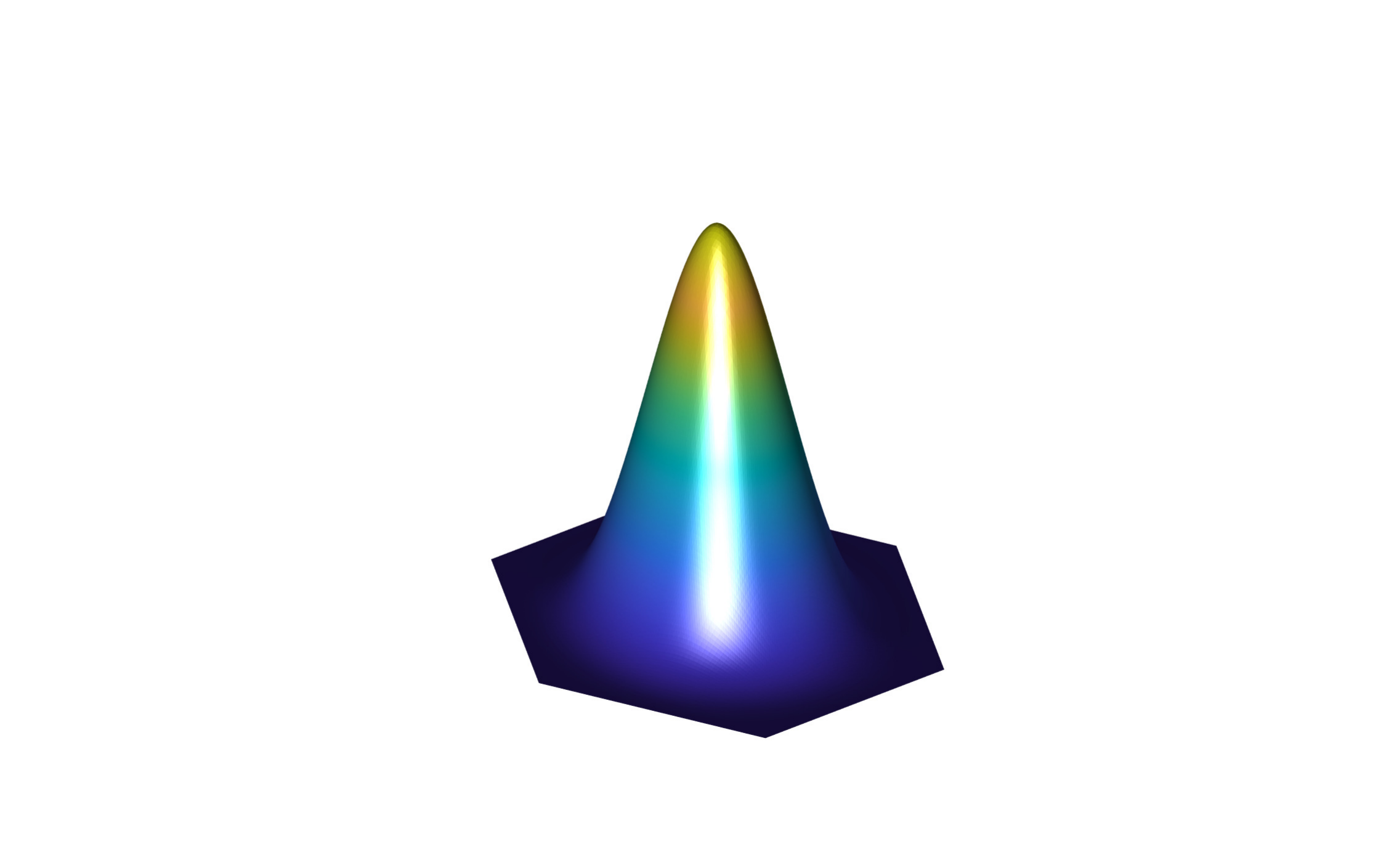} \\
		\includegraphics[width=0.4\linewidth,clip]{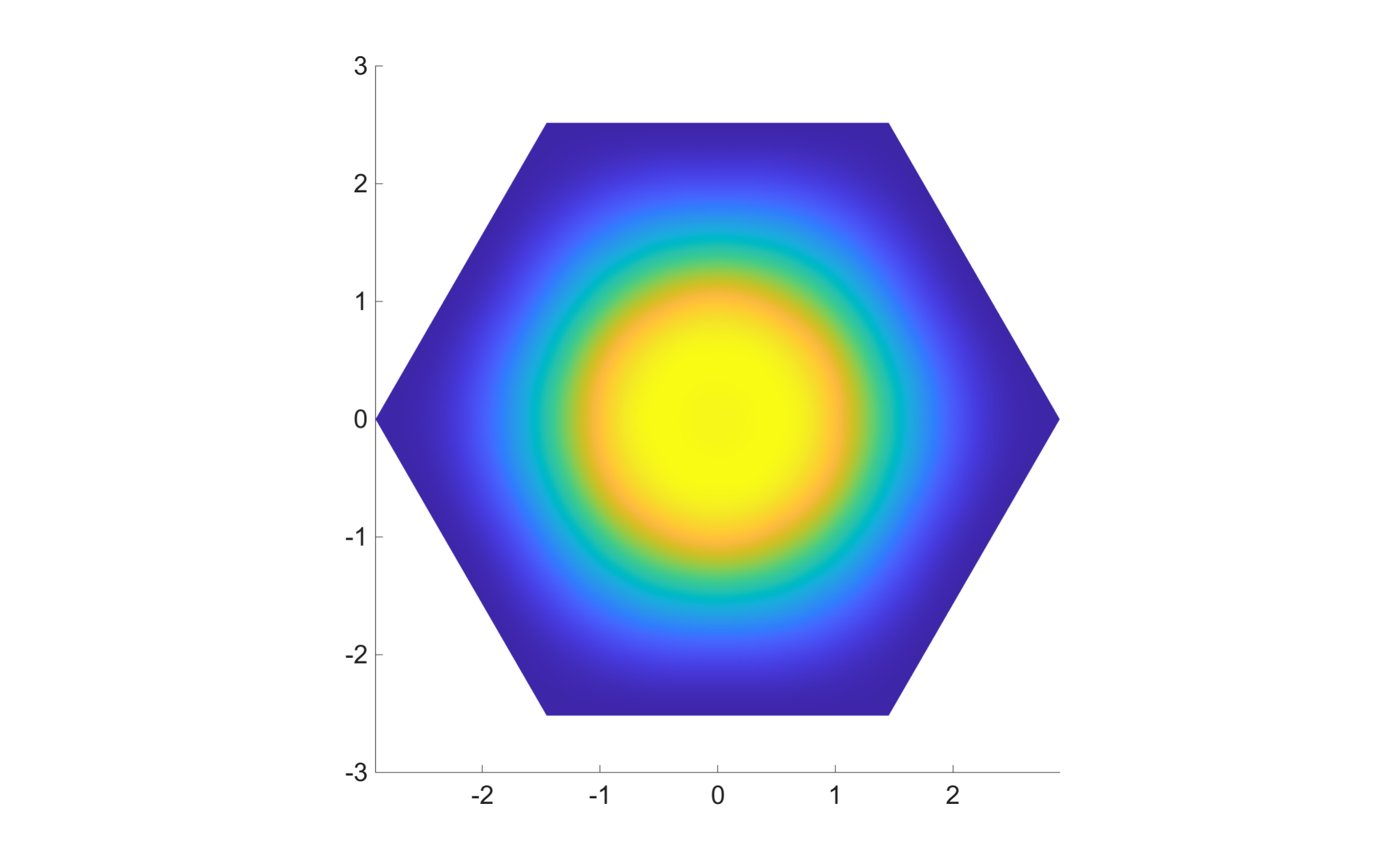} &
		\includegraphics[width=0.4\linewidth,clip]{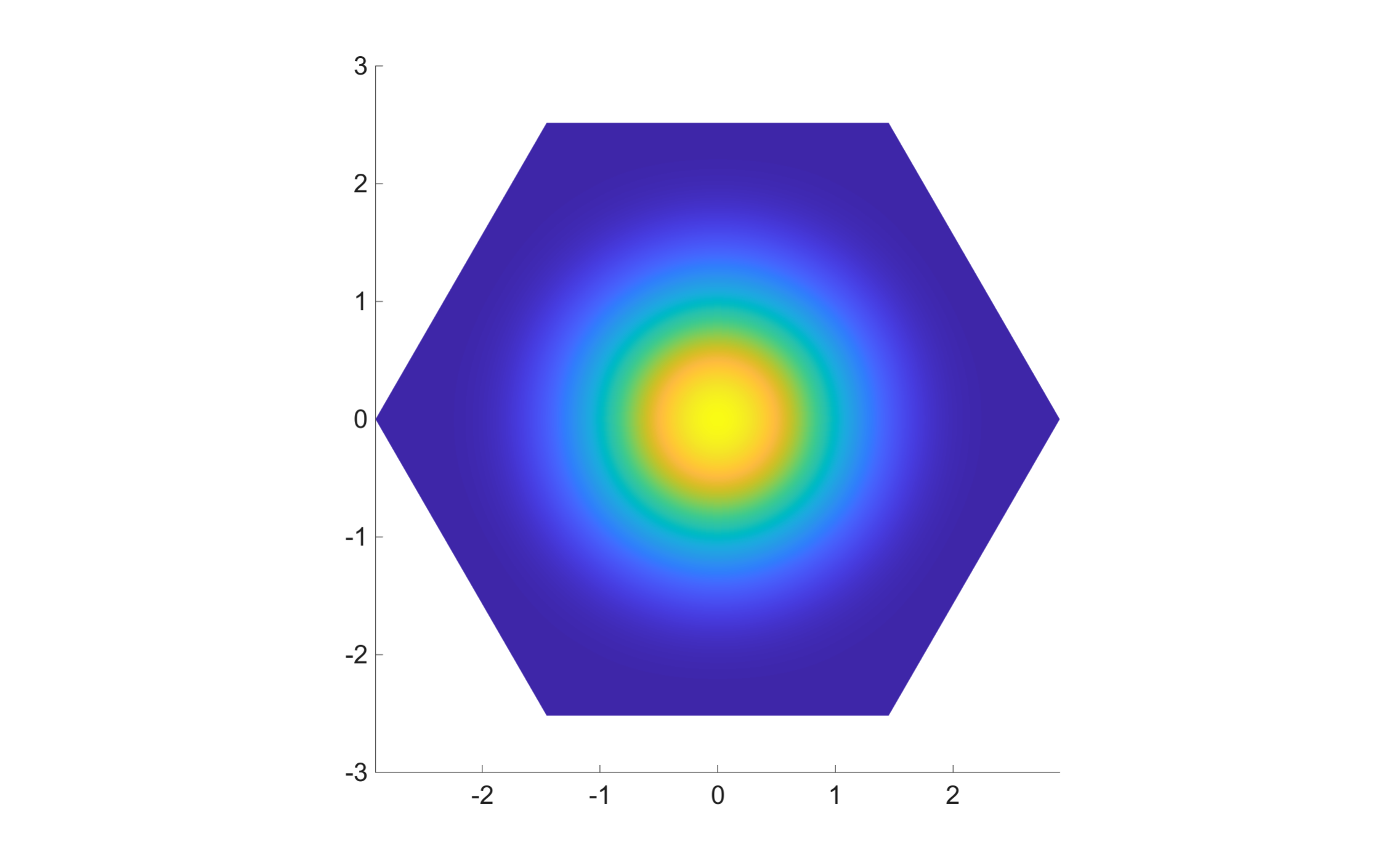} \\
	\end{tabular}
	\caption{Basic limit function for equilateral grid and $L=1.6$: $W(\cdot)=1$ (left) and $W(\cdot)=1-\cdot$ (right). {The top row shows the basic limit functions while the bottom row their top views}. Zero values were cropped to show the shape of their supports.}
	\label{fig:equilateral_basic_limit}
\end{figure}

\begin{figure}[!h]
	\centering
	\begin{tabular}{cc}
		\includegraphics[width=0.4\linewidth,clip]{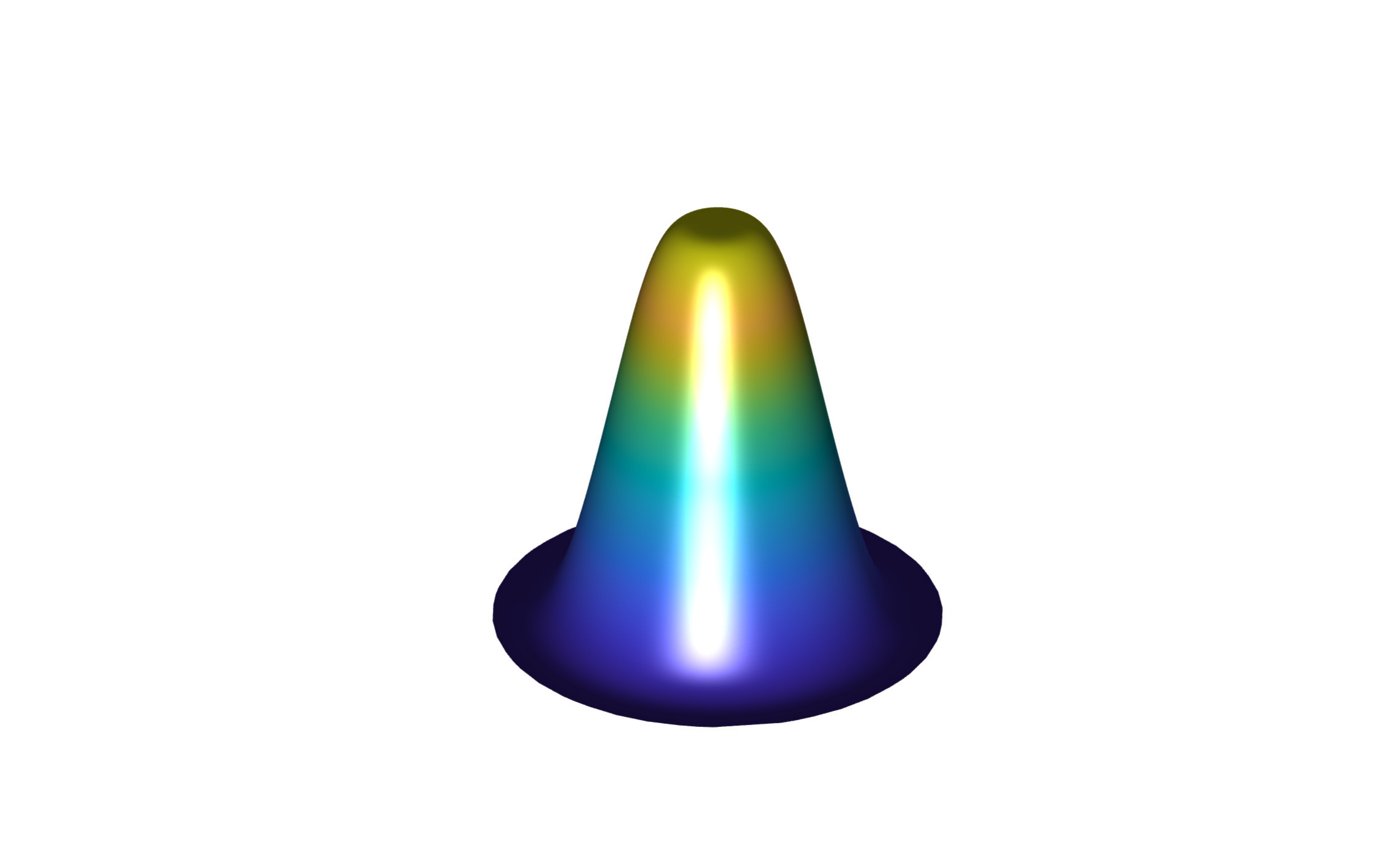} &
		\includegraphics[width=0.4\linewidth,clip]{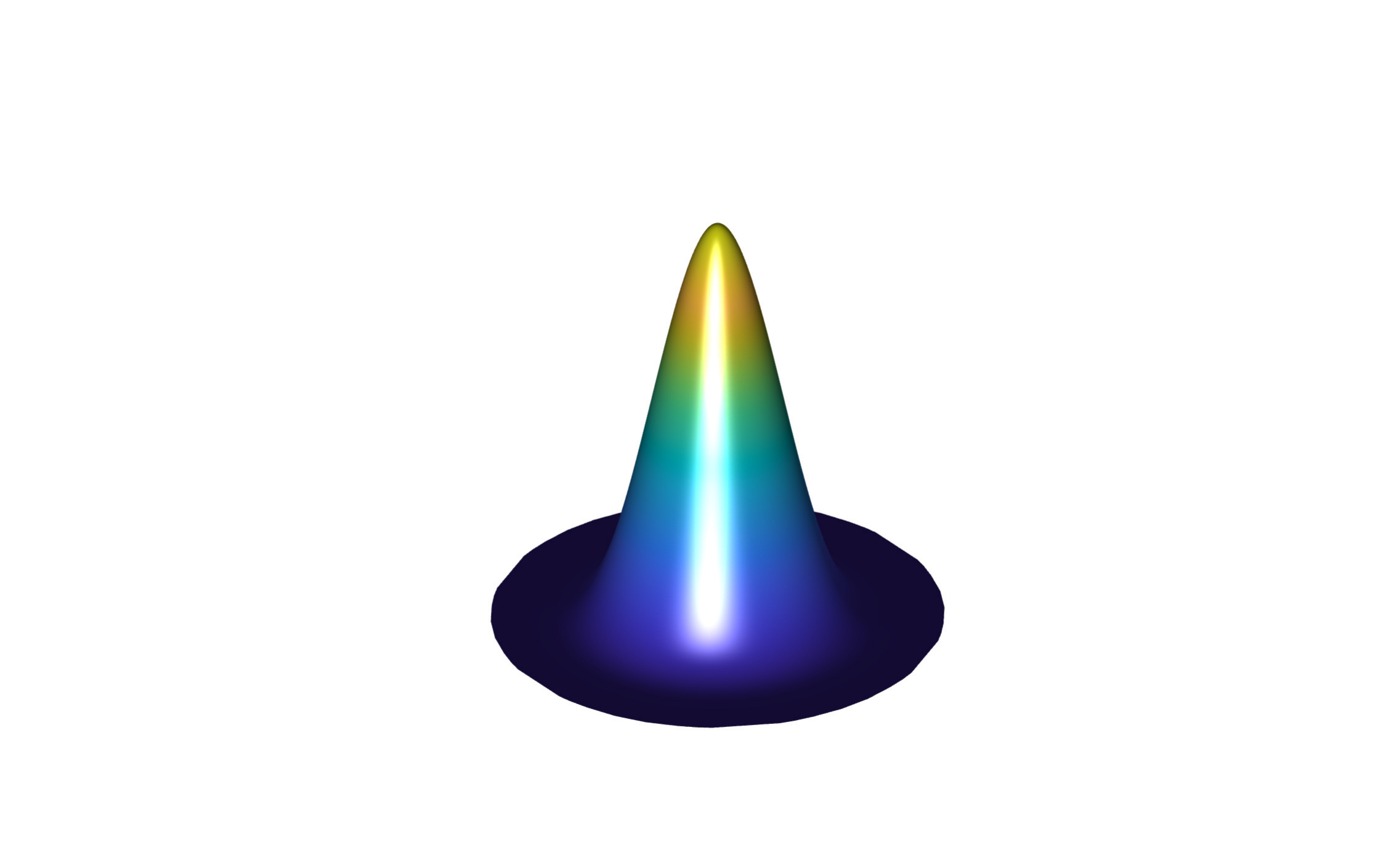} \\
		\includegraphics[width=0.4\linewidth,clip]{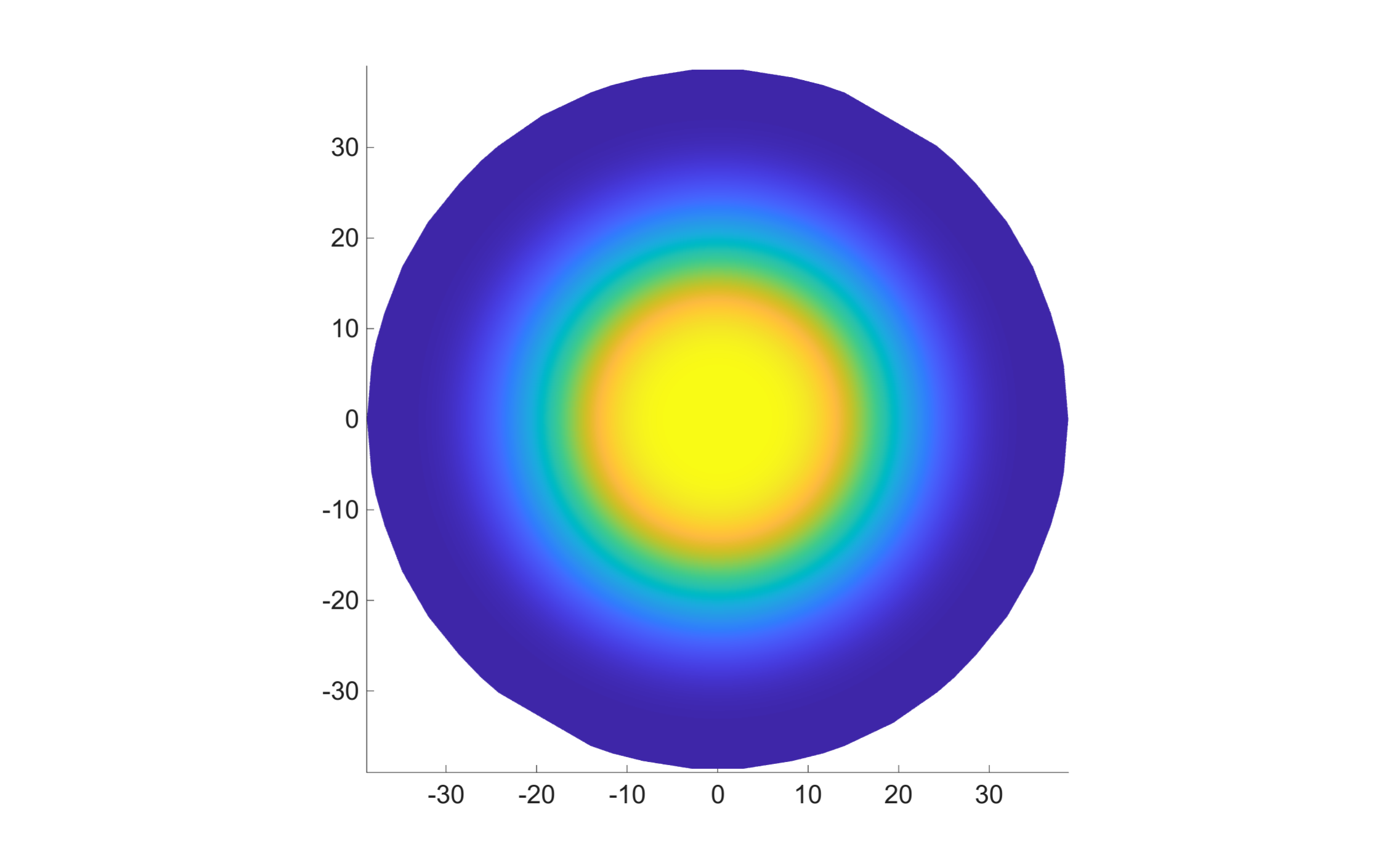} &
		\includegraphics[width=0.4\linewidth,clip]{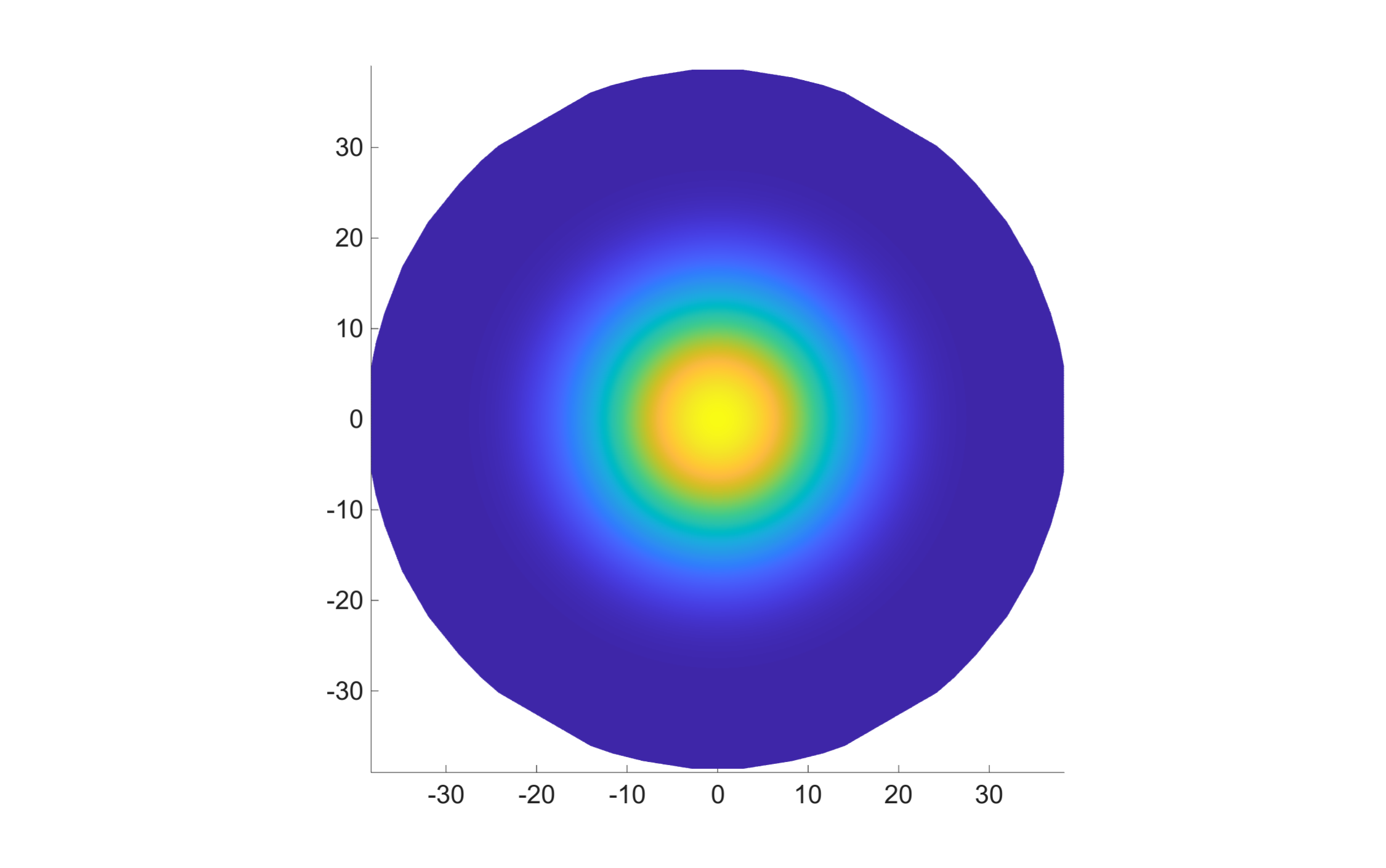} \\
	\end{tabular}
	{\caption{Basic limit function for equilateral grid and $L=20$: $W(\cdot)=1$ (left) and $W(\cdot)=1-\cdot$ (right).The top row shows the basic limit functions while the bottom row their top views. Zero values were cropped to show the shape of their supports.}}
	\label{fig:equilateral_basic_limit_L20}
\end{figure}


\subsection{Triangular-rectangular grid}

We set $\frac32 < L < \frac{\sqrt{13}}2$, leading to the stencils shown in Figure \ref{fig:rectangular_rings}. For $W(\cdot)=1$ the bivariate mask is
$$\ba=\frac1{72}\left(\begin{array}{ccccccc} 0 & 0 & 6 & 9 & 6 & 0 & 0\\ 0 & 8 & 9 & 8 & 9 & 8 & 0\\ 6 & 9 & 6 & 9 & 6 & 9 & 6\\ 9 & 8 & 9 & \circled{8} & 9 & 8 & 9\\ 6 & 9 & 6 & 9 & 6 & 9 & 6\\ 0 & 8 & 9 & 8 & 9 & 8 & 0\\ 0 & 0 & 6 & 9 & 6 & 0 & 0 \end{array}\right).$$
while for $W(\cdot)=1-\cdot$, it is
\begin{equation}
	\ba=\left(\begin{array}{ccccccc} 0 & 0 & a & b & a & 0 & 0\\ 0 & c & d & e & d & c & 0\\ a & d & f & g & f & d & a\\ b & e & g & \circled{h} & g & e & b\\ a & d & f & g & f & d & a\\ 0 & c & d & e & d & c & 0\\ 0 & 0 & a & b & a & 0 & 0 \end{array}\right),
\end{equation}
where $a = -\frac{18\,\left(2\,L-\sqrt{10}\right)}{\sqrt{2}-6\,L+2\,\sqrt{10}}$, $b = -\frac{18\,\left(2\,L-3\right)}{\sqrt{5}-4\,L+2}$, $c=-\frac{72\,\left(L-\sqrt{2}\right)}{4\,\sqrt{2}-9\,L+4}$, $d=-\frac{18\,\left(2\,L-\sqrt{5}\right)}{\sqrt{5}-4\,L+2}$, $e=-\frac{72\,\left(L-1\right)}{4\,\sqrt{2}-9\,L+4}$, $f=-\frac{18\,\left(2\,L-\sqrt{2}\right)}{\sqrt{2}-6\,L+2\,\sqrt{10}}$, $g = -\frac{18\,\left(2\,L-1\right)}{\sqrt{5}-4\,L+2}$, $h=-\frac{72\,L}{4\,\sqrt{2}-9\,L+4}$. It can be checked that all the coefficients are positive since $\frac32 < L < \frac{\sqrt{13}}2$, as more generally proved in Theorem \ref{tmh:uniform}.

\begin{figure}[!h]
	\centering
	\begin{tabular}{cccc}
		\resizebox{0.23\textwidth}{!}{
		\input{images/rectangular_ring_r.txt}
		}
		&
		\resizebox{0.23\textwidth}{!}{
		\input{images/rectangular_ring_h.txt}
		}
		&
		\resizebox{0.23\textwidth}{!}{
		\input{images/rectangular_ring_v.txt}
		}
		&
		\resizebox{0.23\textwidth}{!}{
		\input{images/rectangular_ring_d.txt}
		}
	\end{tabular}
	\caption{
	Stencil selection (red dots) on a triangular-rectangular grid with $\frac32 < L < \frac{\sqrt{13}}2$.
	From left to right: replacement, horizontal insertion, vertical insertion and diagonal insertion.
	The green circle corresponds to $L=\frac32$, the blue circle to $L=\frac{\sqrt{13}}2$.}
	\label{fig:rectangular_rings}
\end{figure}

\begin{figure}[!h]
	\centering
	\begin{tabular}{cc}
		\includegraphics[width=0.4\linewidth,clip]{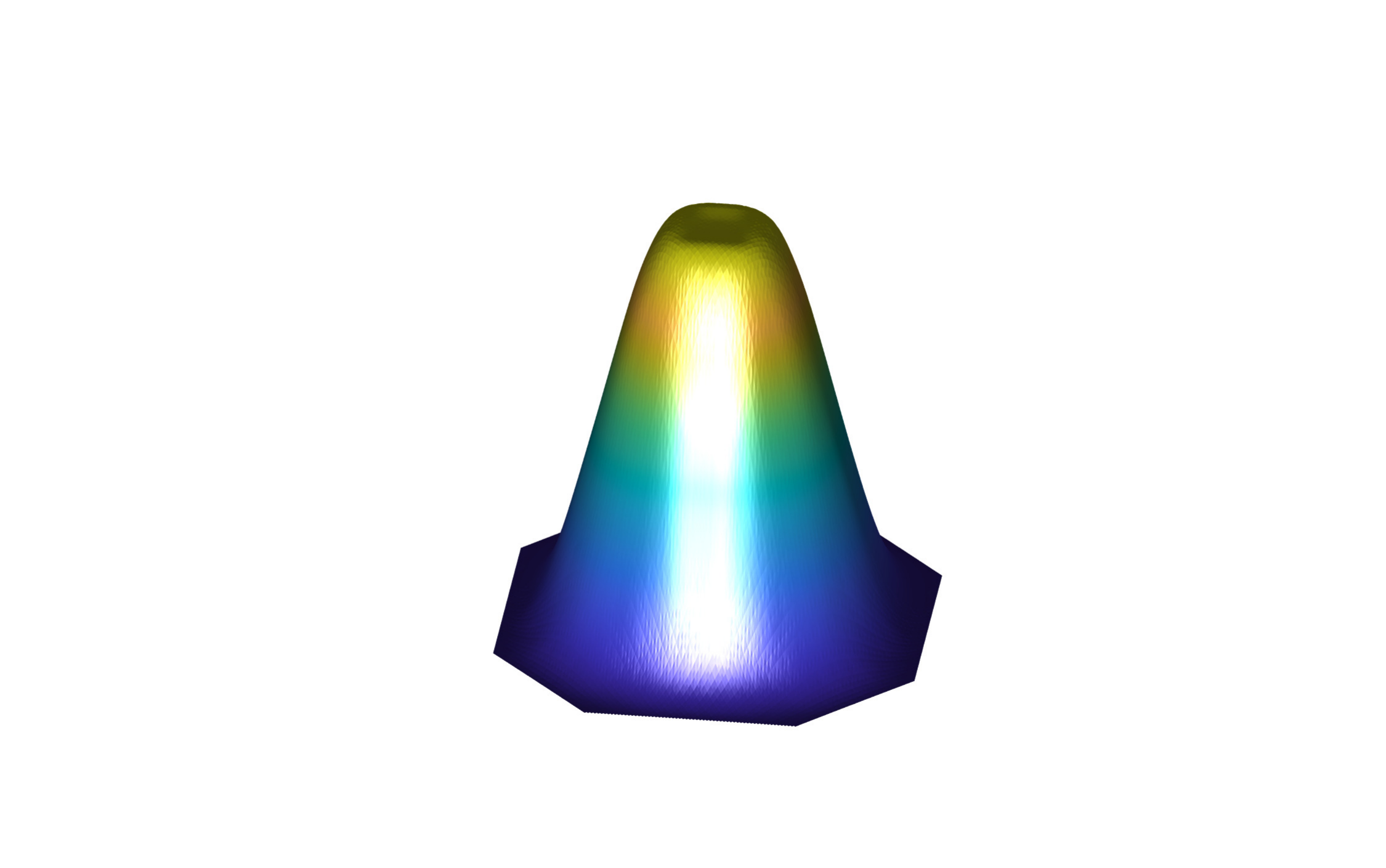} &
		\includegraphics[width=0.4\linewidth,clip]{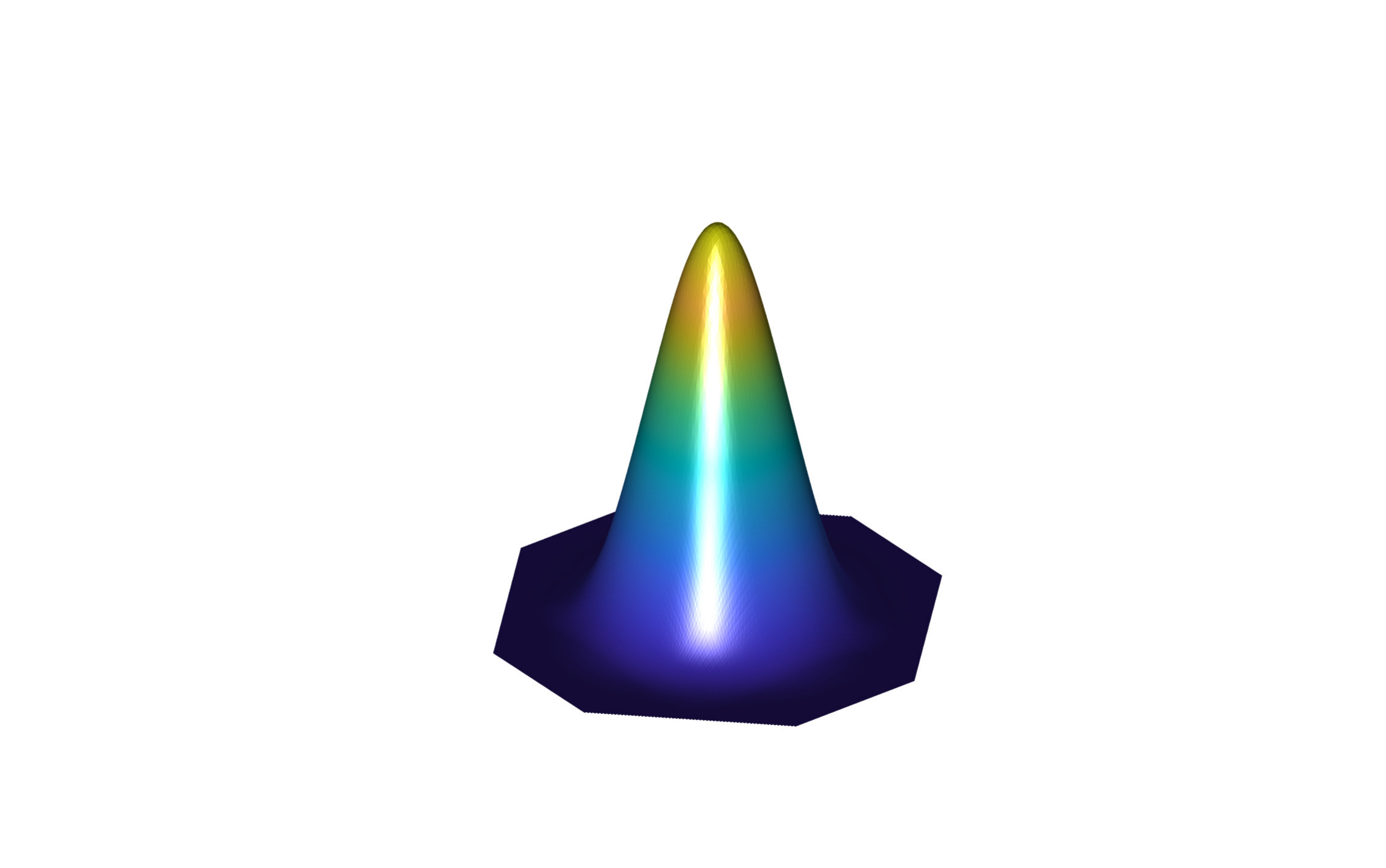} \\
		\includegraphics[width=0.4\linewidth,clip]{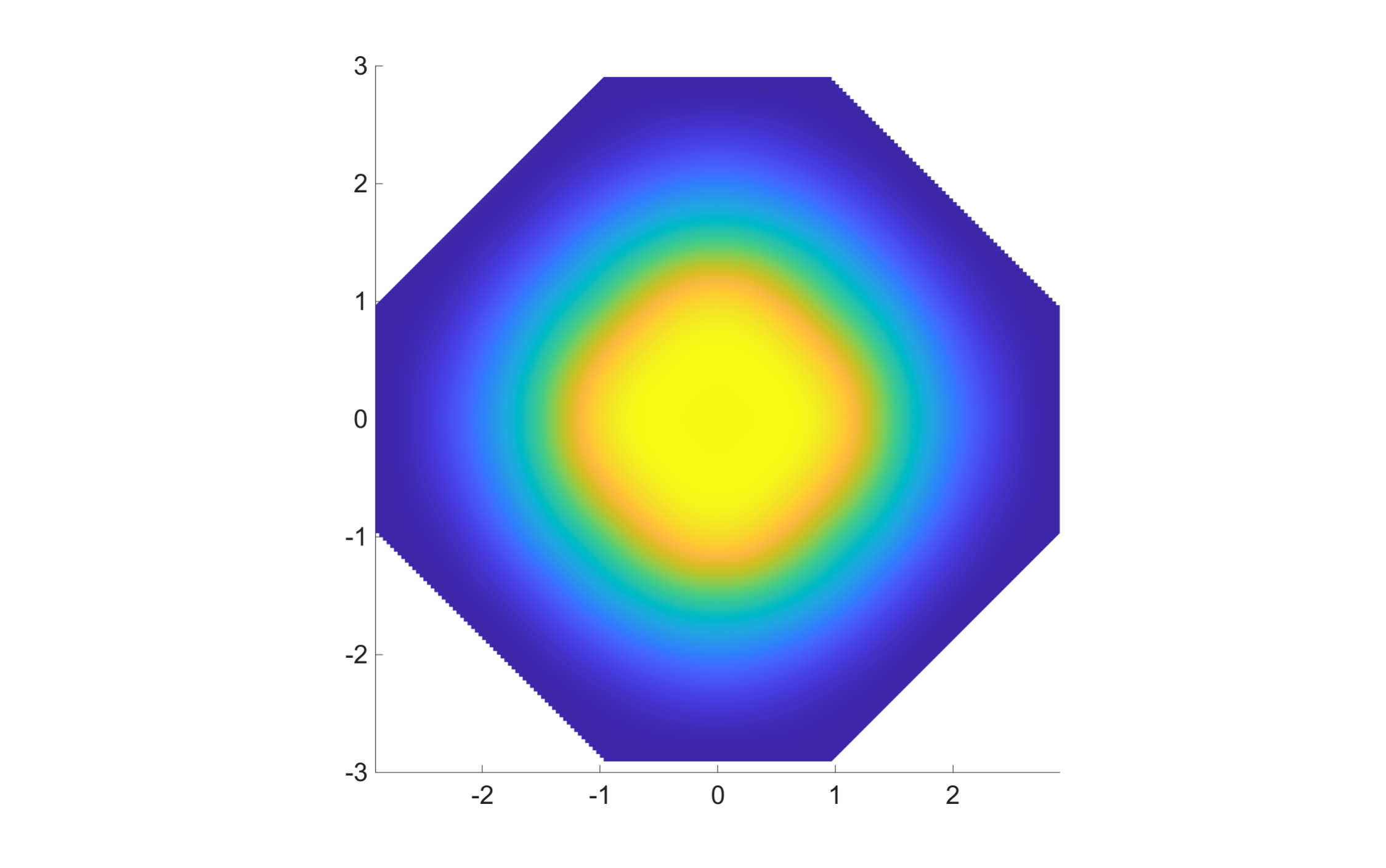} &
		\includegraphics[width=0.4\linewidth,clip]{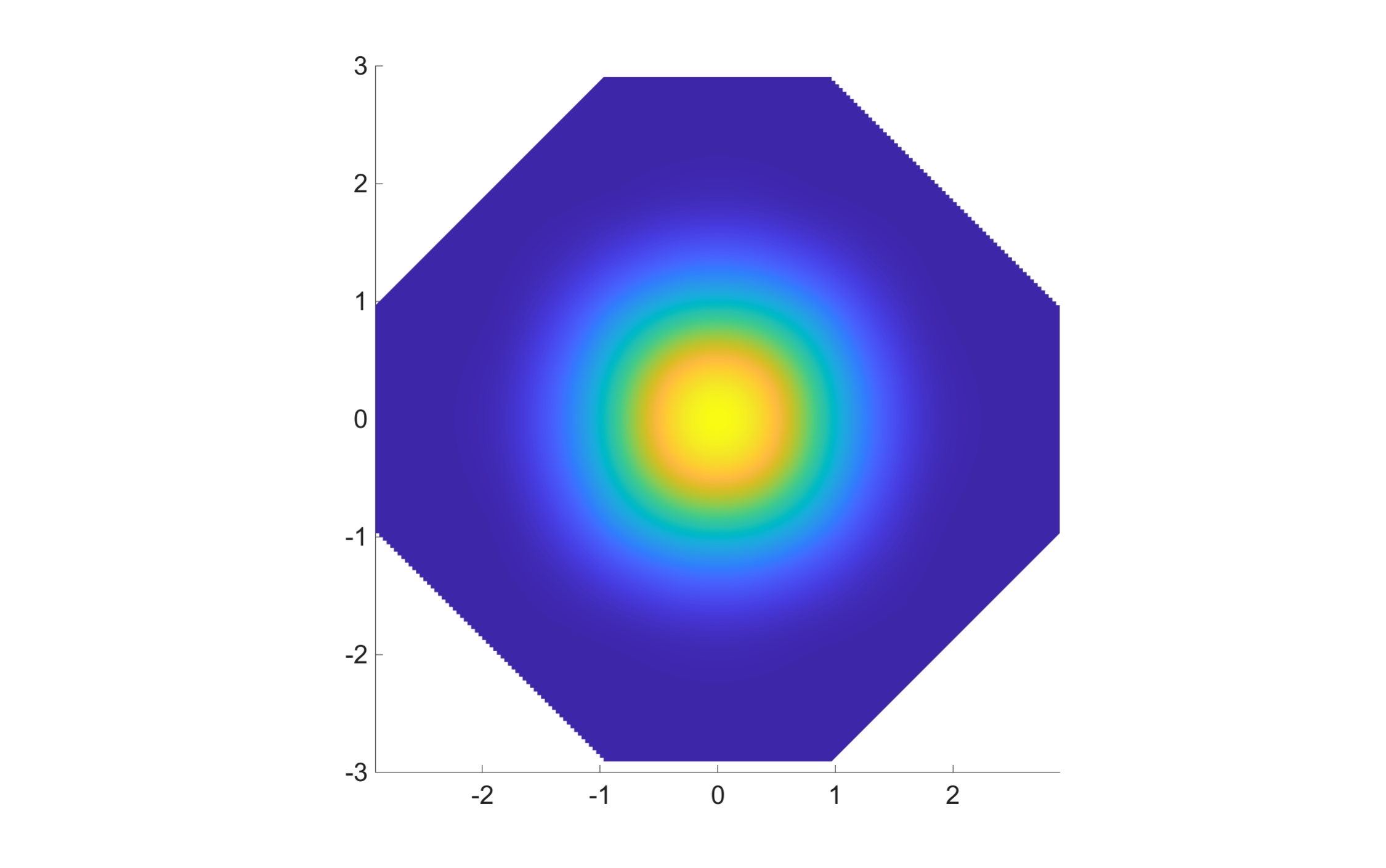} \\
	\end{tabular}
	\caption{Basic limit function for triangular-rectangular grid and $L=1.6$: $W(\cdot)=1$ (left) and $W(\cdot)=1-\cdot$ (right). {The top row shows the basic limit functions while the bottom row their top views}.  Zero values were cropped to enhance shape of their supports.}
	\label{fig:rectangular_basic_limit}
\end{figure}

\begin{figure}[!h]
	\centering
	\begin{tabular}{cc}
		\includegraphics[width=0.4\linewidth,clip]{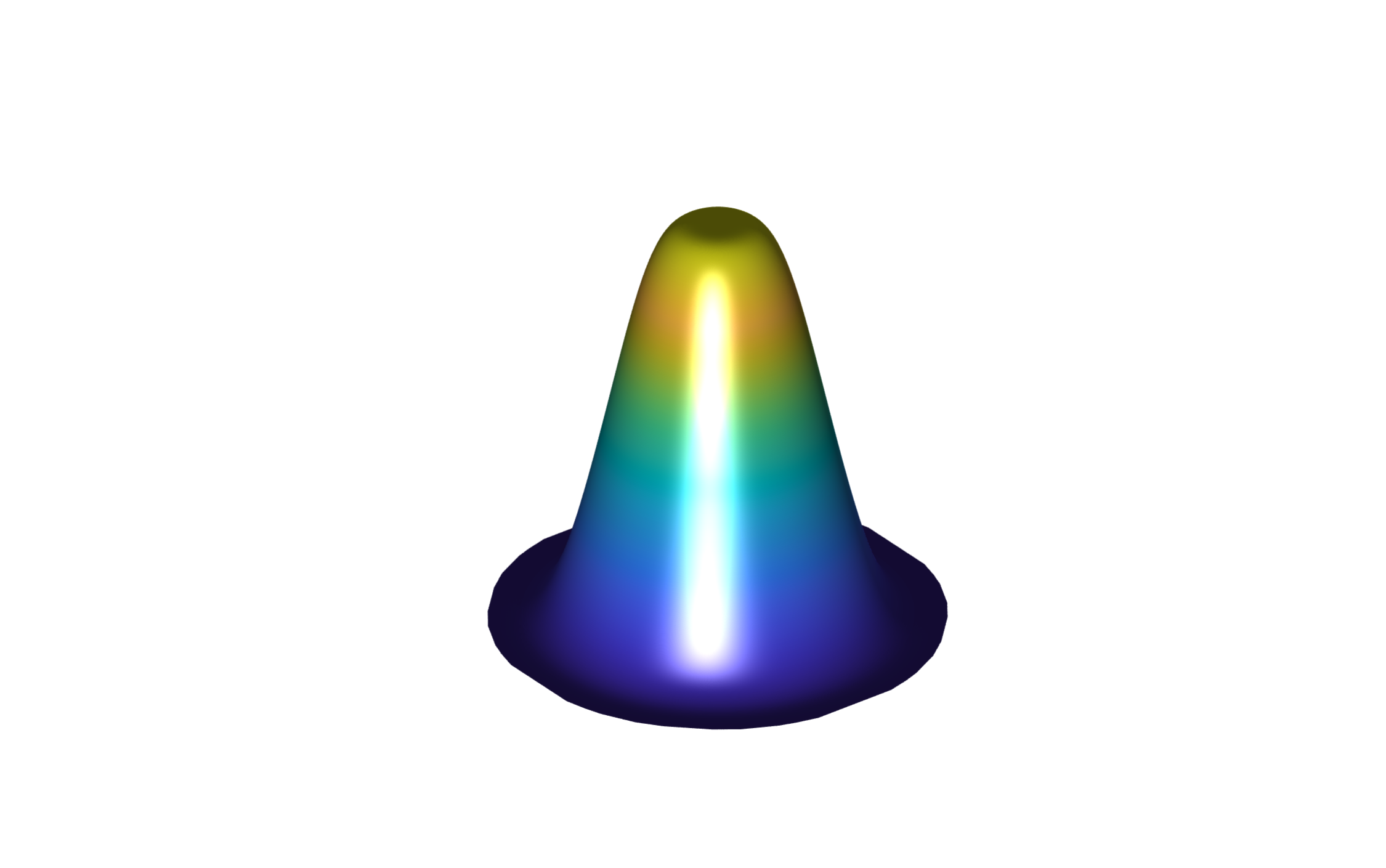} &
		\includegraphics[width=0.4\linewidth,clip]{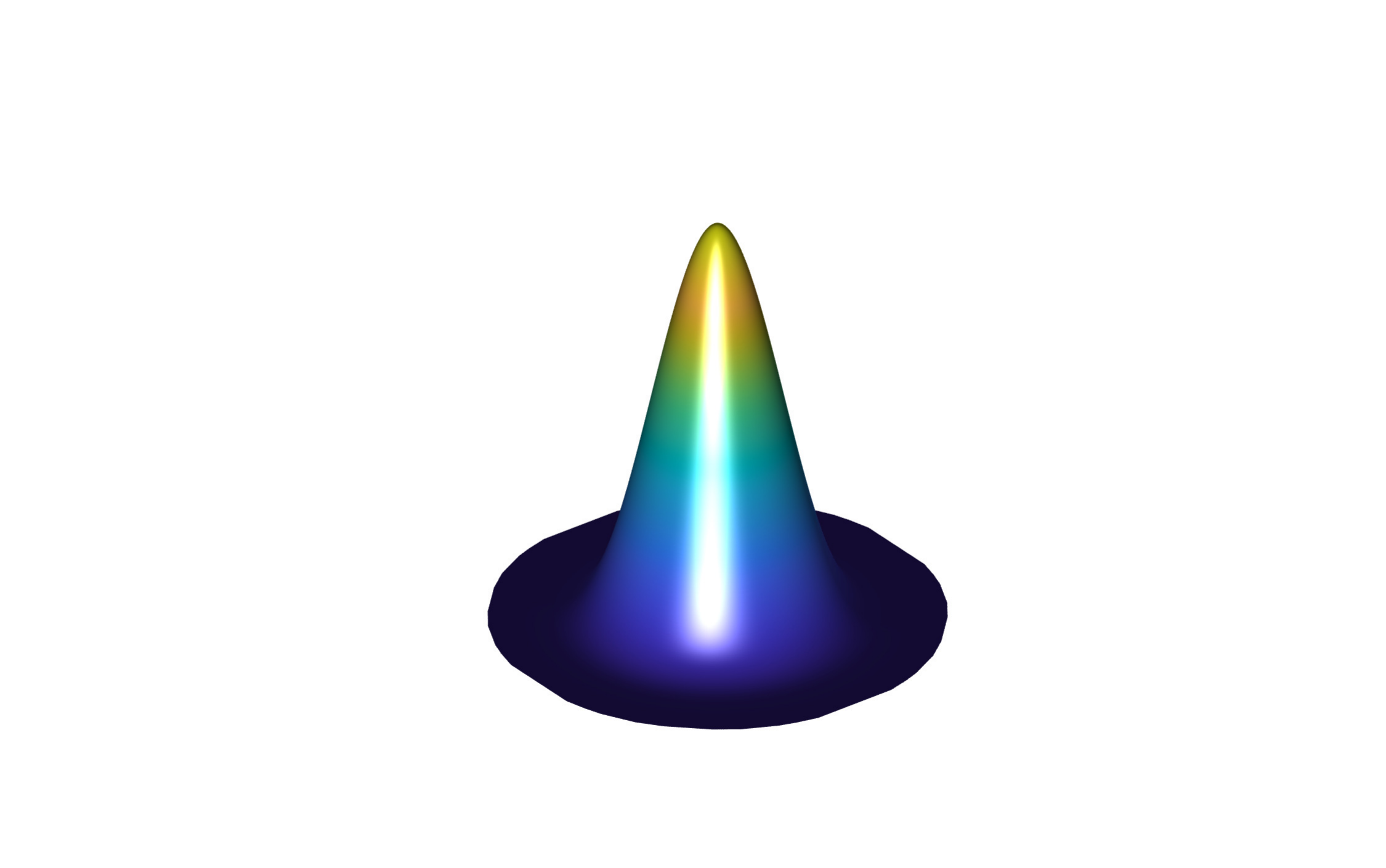} \\
		\includegraphics[width=0.4\linewidth,clip]{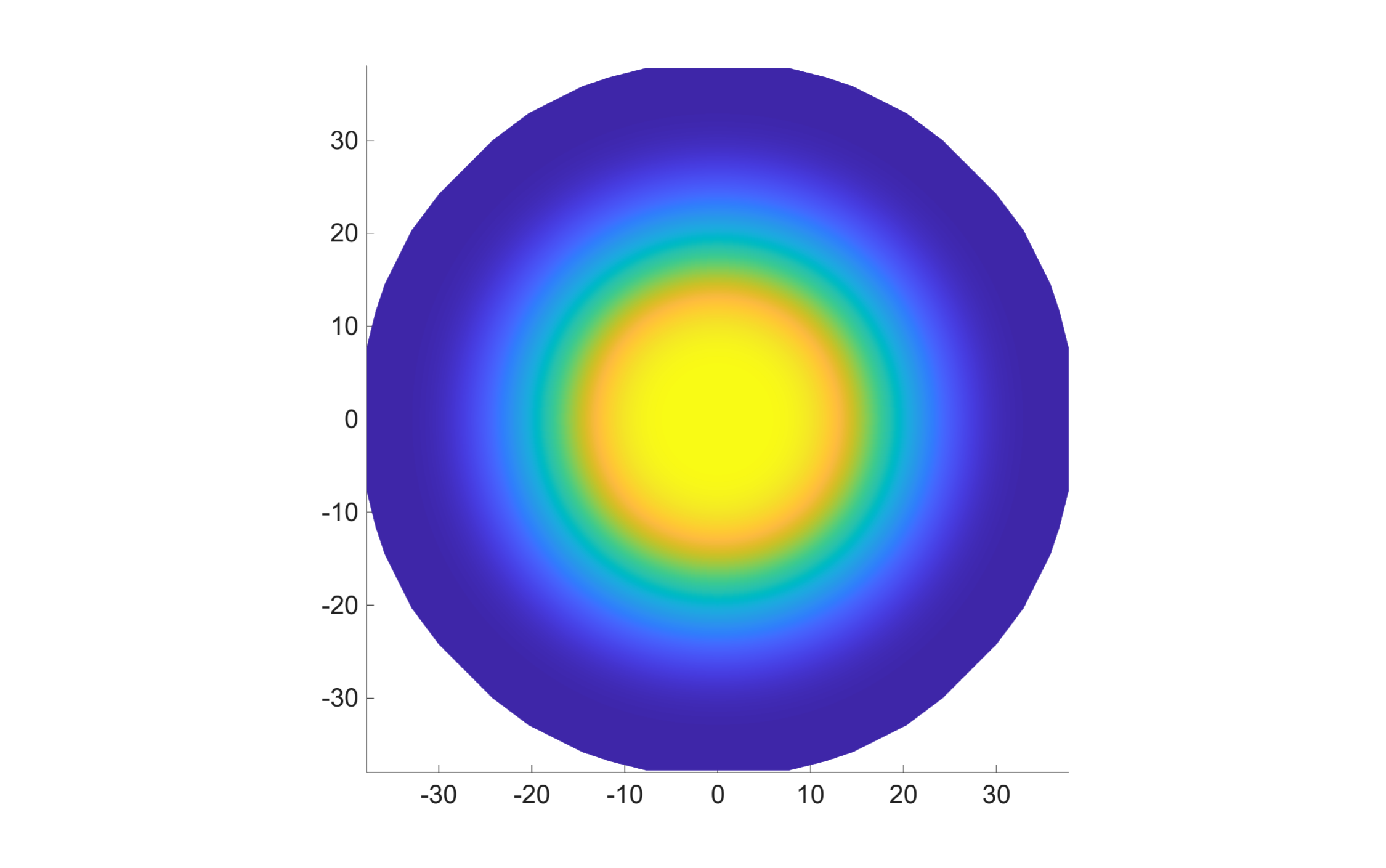} &
		\includegraphics[width=0.4\linewidth,clip]{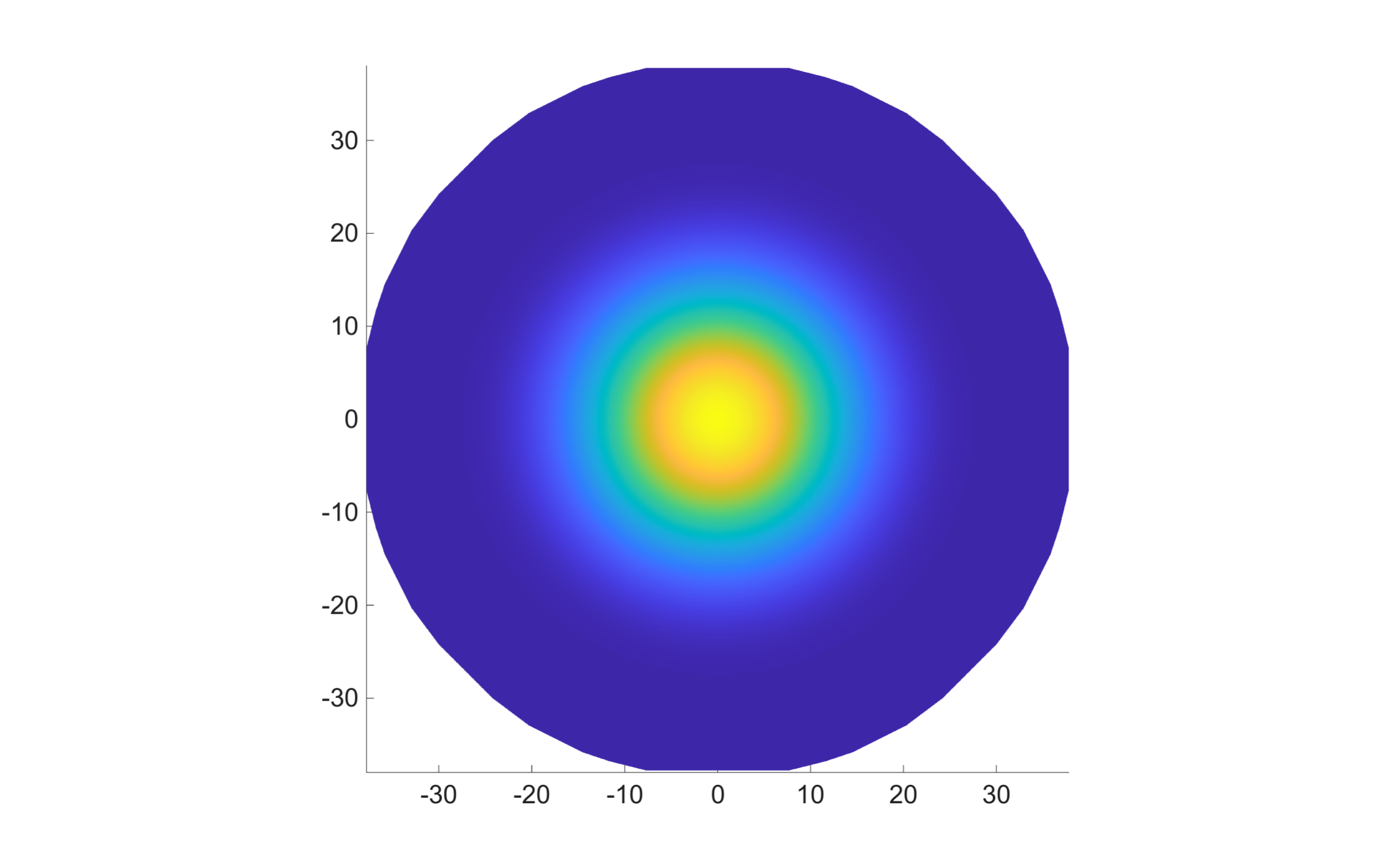} \\
	\end{tabular}
	{\caption{Basic limit function for triangular-rectangular grid and $L=20$: $W(\cdot)=1$ (left) and $W(\cdot)=1-\cdot$ (right). {The top row shows the basic limit functions while the bottom row their top views}. Zero values were cropped to show the shape of their supports.}}
	\label{fig:rectangular_basic_limit_L20}
\end{figure}
{ Figure \ref{fig:rectangular_basic_limit} and Figure \ref{fig:rectangular_basic_limit_L20} show the basic limit functions for different values of $L$ and $W$ in case of a triangular-rectangular grid.
The results are very similar to the equilateral case.}


\subsection{Comparison of the basic limit functions}

In Figures \ref{fig:equilateral_basic_limit} to \ref{fig:rectangular_basic_limit} we show the basic limit functions for two uniform grids (equilateral and triangular-rectangular) and two choices of weight function, $W(\cdot) = 1$ and $W(\cdot)=1-\cdot$, respectively. They are obtained by applying the scheme to the initial ``delta" data set with all zeros except for the vertex at the origin, which is set to one. Five iterations were performed to obtain the plots.


{An interesting feature that we observe comparing Figures \ref{fig:equilateral_basic_limit} and Figure \ref{fig:rectangular_basic_limit}, is that the shape of the support is hexagonal regardless the different grids, and the different masks supports.}

 On the contrary,
even if the mask support is the same, $W$ determines the distribution of the values in the support, so that the shape of the limit function is different. The choice $W(\cdot)=1$ leads to a more uniform distribution of values in the support with a slight depression in the center, while $W(\cdot)=1-\cdot$ leads to a more concentrated distribution of values around the origin with a bell-shaped function around the origin.
From the support of the masks, it is clear that the proposed schemes are not separable (i.e., cannot be obtained by the tensor product of univariate schemes), since separable schemes have rectangular masks supports, and thus rectangular support of the basic limit function.

\section{Numerical Results}\label{sec:num}

This section is devoted to present some numerical results associated with the application of the novel subdivision schemes proposed in this paper. {In Subsection \ref{numerical} some tests are provided to numerically prove the continuity of the new scheme  also when dealing with non-regular grids.}
In Subsection \ref{functional} a comparative analysis is performed with several well-established methods traditionally used for dealing with noisy data, which are briefly reviewed in Subsection \ref{metodi}. Furthermore, Subsection \ref{sec:surface_subdivision} provides numerical results concerning the implementation of the proposed subdivision scheme on noisy \emph{geometric data}. Note that, at the end of the paper, we provide a specific section for readers that are interested to reproduce our experiments.

{\subsection{Numerical continuity at extraordinary points and edges in case of non-regular grids}\label{numerical}

This subsection provides a numerical validation of the continuity of the proposed subdivision schemes on non-regular grids. Figure \ref{fig:random} illustrates the limit function obtained from an irregular, non-uniform triangulation containing several extraordinary vertices. The computed limit surface appears notably smooth, suggesting that a rigorous proof of continuity is plausible.
The technical challenges associated with a rigorous analysis of convergence and smoothness at extraordinary points and across edges are substantial and therefore beyond the scope of this work.

\begin{figure}
	\begin{tabular}{cc}
	\includegraphics[width=0.43\textwidth,clip,trim={250 20 230 50}]{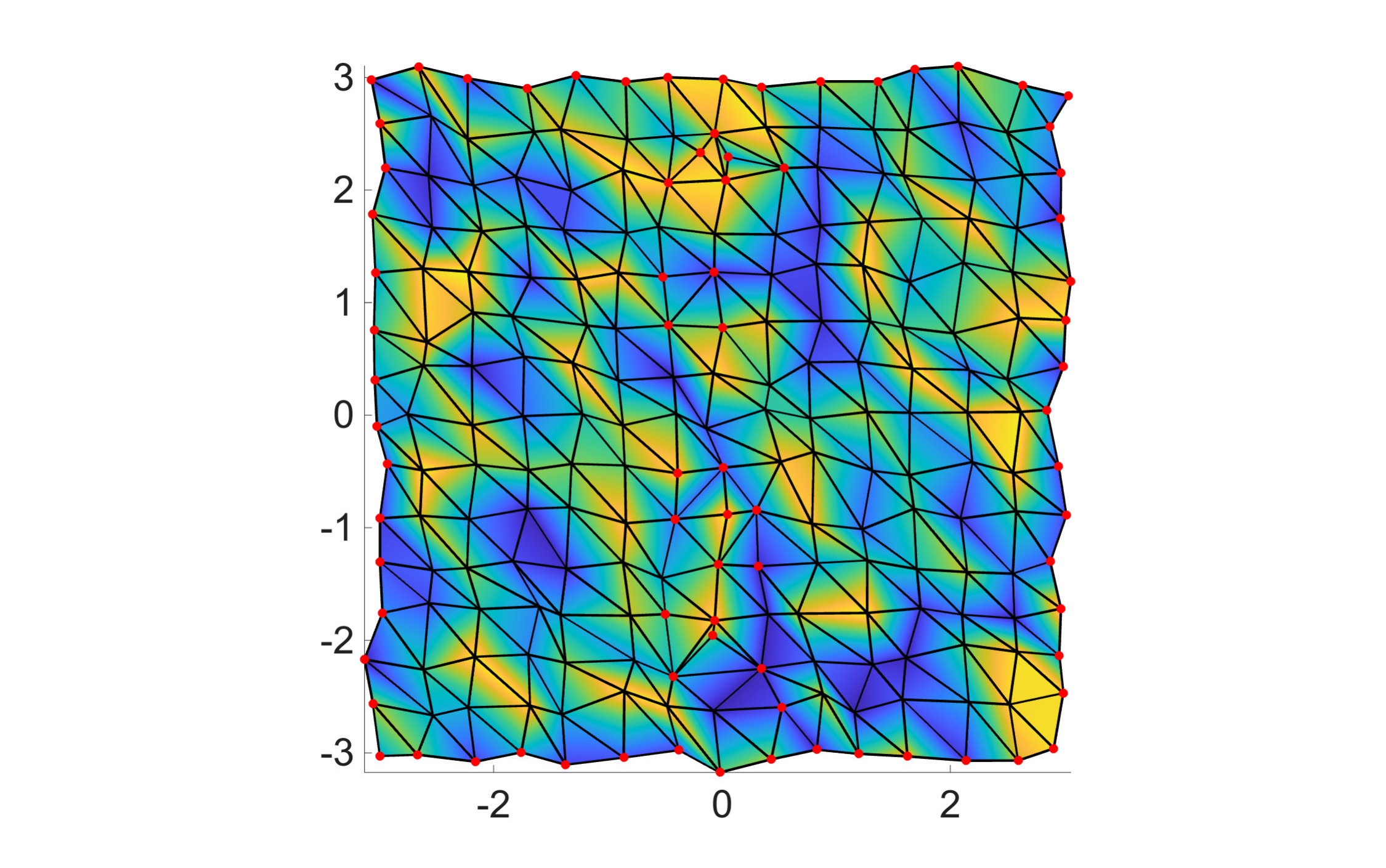}
	&
	\includegraphics[width=0.43\textwidth,clip,trim={290 70 270 50}]{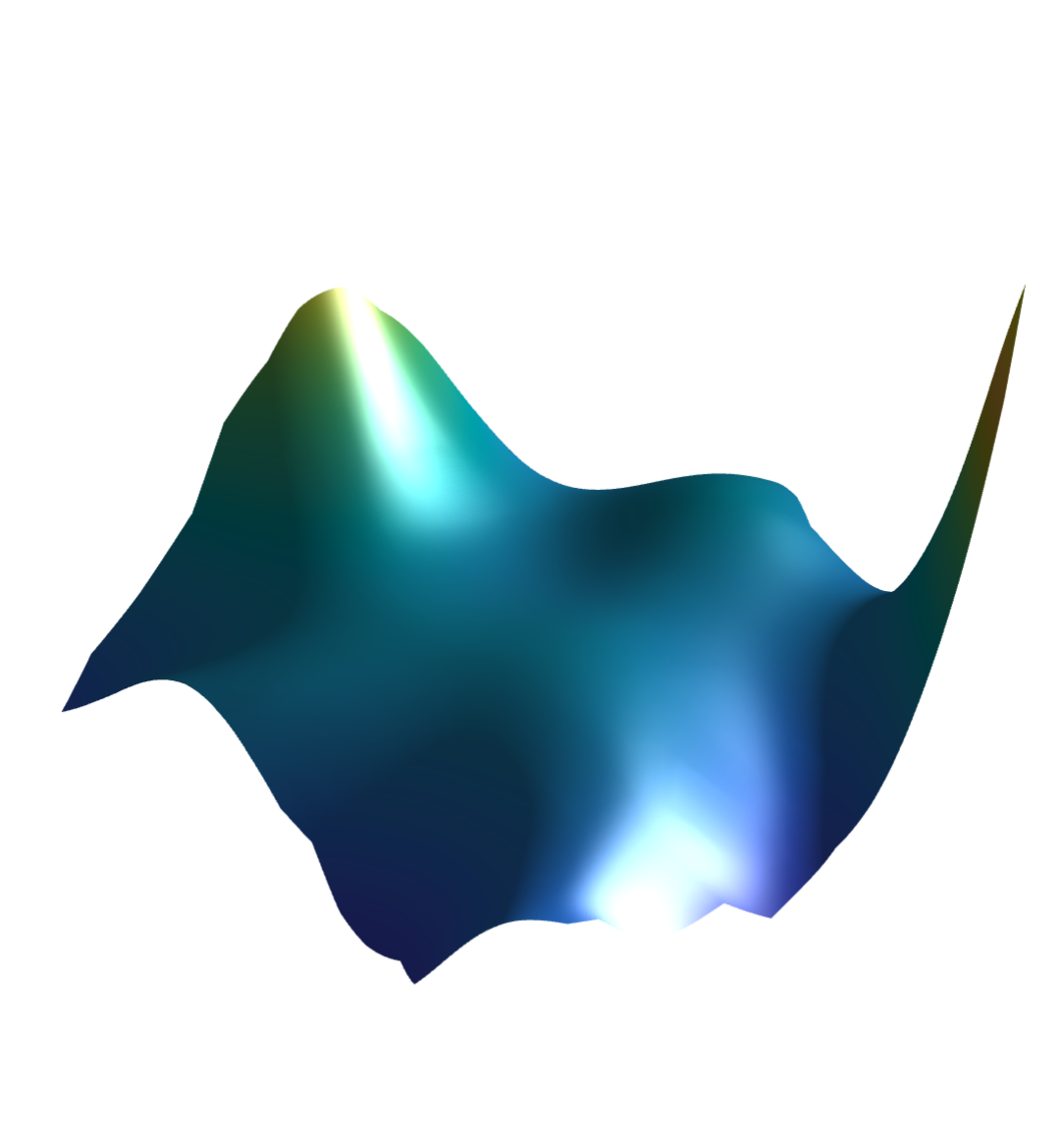}
	\end{tabular}
	{\caption{\label{fig:random} On the irregular non-uniform triangulation with random data associated (color) on the left figure, we apply  5 iterations of the subdivision scheme with $L=2$, $W(\cdot)=1-\cdot$. The extraordinary vertices are marked in red.}}
\end{figure}
}

\subsection{Four classic bivariate local linear regression methods}\label{metodi}

In this subsection, we recall four distinct techniques that have been extensively used for function approximation. These methods are reviewed to facilitate comparison with our newly proposed subdivision scheme in the literature. Specifically, we begin by discussing the Moving Least Squares (MLS) method, which utilizes polynomials of degree up to one, as well as constants (Shepard method). Subsequently, we explore the least squares Radial Basis Functions (RBF) method, and the least squares Tensor Product B-Splines (TPBS) method. Detailed descriptions of these methods can be found in \cite{Deboor} and in \cite{FASSHAUER}.

Throughout this section, we use the same notation introduced in Section 2.1: $\{\mathbf{v}_1, \dots, \mathbf{v}_n\} \subset \Omega$ represent a set of $n$ distinct data points, randomly distributed in $\Omega \subset \mathbb{R}^2$, with associated values
\begin{equation}
	z_i = f(\mathbf{v}_i) + \epsilon_i, \quad i = 1, \dots, n,
\end{equation}
where $f$ is an unknown function and $\epsilon_i,\ i = 1, \dots, n,$ are random values assumed to be independent and identically distributed with mean 0.

We start by presenting the version of the MLS (see e.g. \cite{LEVIN} and Chapter 22 in \cite{FASSHAUER}) implemented in Matlab as the routine {\tt locfit} (see \cite{LOADER}). This method is also known as \emph{local linear regression} method (LLR) in statistics context (see \cite{HASTIE,LOADER}). The bivariate MLS approximation of order one at a target point $\hat \bv \in \Omega$ is the value $p_{\hat \bv}(\hat \bv)$ where $p_{\hat \bv}\in \Pi^1$ is the linear polynomial obtained minimizing among all $p\in \Pi^1$ the weighted least squares problem
\begin{equation}\label{def:LLR}
	\min_{p\in \Pi^1}\sum_{i=1}^{n} W\left(\frac{\|\hat \bv-\mathbf{v}_i\|}{L}\right) (p(\bv_i)-z_i)^2,
\end{equation}
where $W:\R\to[0,1]$, $W|_{(1,\infty)}=0$, is a weighted function which assigns a weight to each $\mathbf{v}_i$ based on its distance from the target point, $\hat \bv\in\Omega$, as we mentioned in Section \ref{subsec:weights}. This function typically distributes the largest weights to data close to $\hat \bv$. The parameter $L$ represents the bandwidth, controlling the size of the local neighbourhood to ensure a well-posed problem.

If we restrict $p$ to be constant, i.e. we consider \eqref{def:LLR} minimizing over $\Pi^0$, the MLS method reduces to a generalization of Shepard method \cite{SHEPARD}, resulting in the following closed approximation formula:
\begin{equation}\label{shepardgen}
	p_0(\hat \bv)=\frac{\displaystyle{\sum_{i=1}^n} z_i W(\|\hat \bv-\mathbf{v}_i\|/L)}{\sum_{j=1}^n W(\|\hat \bv-\mathbf{v}_j\|/L)}.
\end{equation}

By replacing the set of polynomials with other basis functions, such as Radial Basis Functions (RBF), we obtain the least square RBF method \cite{FASSHAUER}. To describe
it we need $\{\mathbf{w}_1, \dots, \mathbf{w}_M\}$ a set of $M$ centers, where $M < n$, and $\Phi_j:\Omega\subset\mathbb{R}^2\to \mathbb{R},\ j=1,\hdots,M$ a set of radial basis functions, defined as
$$\Phi_j(\bv)=W(\|\bv-\mathbf{w}_j\|_2/L),\quad j=1,\hdots,M,$$
where $W$ is a fixed function, such as $W(x) = e^{-(2.5x)^2/2}$ (see \cite{chen,FASSHAUER,wendland} for other examples). The approximation at $\hat{\mathbf{v}}\in \Omega$ is defined as
\begin{equation}\label{rbfapprox}
	{Q}_f(\hat{\mathbf{v}})=\sum_{j=1}^M c_j \Phi_j(\hat{\mathbf{v}}),
\end{equation}
where the coefficients $c_j$ are determined by solving the least squares problem:
$$\min_{c_1,\cdots, c_M \in\R}\sum_{i=1}^n({Q}_f(\mathbf{v}_i)-z_i)^2=\sum_{i=1}^n\left(\sum_{j=1}^M c_j \Phi_j(\mathbf{v}_i)-z_i\right)^2,$$
having a unique solution provided that the collocation matrix \( A \), defined by the entries
\[
A_{ij} = \Phi_j(\mathbf{v}_i), \quad i = 1, \ldots, n, \quad j = 1, \ldots, M,
\]
has full rank (see \cite{FASSHAUER}).

Finally, by substituting the RBFs $\{\Phi_j\}_{j=1}^M$ with Tensor-Product B-Splines (TPBS) $\{\mathcal{G}_{k,j}\}_{j=1}^M$ of order $k$ and knots $\{\mathbf{b}_j\}_{j=1}^M$ (see \cite{Deboor}), we construct the TP-spline approximation
\begin{equation}\label{splineapprox}
	{S}_f(\hat{\mathbf{v}})= \sum_{j=1}^M c_j \mathcal{G}_{k,j}(\hat{\mathbf{v}}).
\end{equation}
As before, the coefficients $c_j$ are computed by minimizing
$$\min_{c_1,\cdots, c_M\in\R}\sum_{i=1}^n\left(\sum_{j=1}^M c_j\mathcal{G}_{k,j}(\mathbf{v}_i)-z_i\right)^2.$$

\subsection{Comparison}\label{functional}

To analyze the behaviour of our new numerical method, we conduct the following numerical experiments: we consider a non-uniform irregular triangulation consisting of $N^0=227$ vertices $\bv^{0}_i \in [-3.2,3.2]^2$, shown in Figure \ref{fig:random}-left, and
$$z^{0}_i=f(\bv^{0}_i)+\epsilon_i, \quad i=1,\hdots,227,$$ for $f(x,y)=\sin(x)\cos(y)$ and each $\epsilon_{i}$ follows a normal distribution with mean 0 and standard deviation $0.2$. {This data set is represented in Figure \ref{figuraDATA}.}
We apply five iterations of our new subdivision scheme to these initial data to obtain $\bv^5$ and $\bz^5$, select the vertices included in the square $[-1,1]^2$, $\tilde{\bv}^5=\bv^5\cap [-1,1]^2$, and their associated values $\tilde{\bz}^5=\{\tilde{z}_i^5\}_{i=1}^{\tilde{N}^5}$
and subsequently compute error metrics:
\begin{equation}
	E_2:=\left(\frac{1}{\tilde{N}^5}\sum_{i=1}^{\tilde{N}^5} (\tilde{z}_i^5-f(\tilde{\bv}^{5}_i))^2\right)^{\frac12}, \quad E_\infty:=\max_{i=1,\hdots,\tilde{N}^5} |\tilde{z}_i^5-f(\tilde{\bv}^{5}_i)|.
\end{equation}

We compare our results with those derived by the methods discussed in the previous section.
The diameter of the considered triangulation is approximately $0.8078$, and $L\in\{1,2\}$, for any method except for the TPBS method, which is not dependent on $L$. As weight functions, we will consider $W(\cdot)=1-\cdot$, except for Shepard and RBF methods, since they would provide non-smooth approximants, and $W(\cdot)=e^{-(2.5\cdot)^2/2}$. The results are summarized in Table \ref{tab:errors}, and some approximants can be visualized in Figure \ref{figuraA}.

Tables \ref{tab:errors} shows that the new subdivision scheme presents error metrics similar to the rest of the methods and that the choice $L=1$ is better than $L=2$ for all methods, except for the RBF method.

We conclude by providing more details about the implementation: The Matlab {\tt locfit} is used to compute the MLS approximation; for the RBF method we choose the centers $\{\mathbf{w}_j\}_{j=1}^{49}=\{-3,-2,-1,0,1,2,3\}^2$ and for the TPBS method, the uniform nodes $\{\mathbf{b}_j\}_{j=1}^{64}=\{-3,-3,-3,-1,1,3,3,3\}^2$ are considered. In the latter case, we perform the approximation on a uniformly spaced grid of $6000^2$ points in the square $[-1,1]^2$ and randomly selected $\tilde{N}^5$ points which we used to compute $E_2,E_\infty$. All the numerical experiments, as well as other figures in this paper, can be reproduced using the code provided at the end of the paper.

\begin{table}[h]
	\centering
	\begin{tabular}{lrrrr}
		{\bf Method}           & $W(\cdot)$                & $L$ & $E_2$      & $E_\infty$   \\\hline
		MLS                    & $1-|\cdot|$               & 1   & 8.649e-02  & 2.165e-01     \\
		MLS                    & $e^{-(2.5\cdot)^2/2}$     & 1   & 8.727e-02  & 2.255e-01     \\
		Shepard                & $e^{-(2.5\cdot)^2/2}$     & 1   & 8.679e-02  & 2.231e-01     \\
		RBF                    & $e^{-(2.5\cdot)^2/2}$     & 1   & 1.309e-01  & 5.380e-01     \\\hline
		New subdivision scheme & $1-|\cdot|$               & 1   & 9.545e-02  & 2.284e-01 \\
		New subdivision scheme & $e^{-(2.5\cdot)^2/2}$     & 1   & 9.159e-02 & 2.188e-01 \\\hline
		MLS                    & $1-|\cdot|$               & 2   & 2.215e-01  & 4.386e-01     \\
		MLS                    & $e^{-(2.5\cdot)^2/2}$     & 2   & 2.318e-01  & 4.513e-01     \\
		Shepard                & $e^{-(2.5\cdot)^2/2}$     & 2   & 2.218e-01  & 4.479e-01     \\
		RBF                    & $e^{-(2.5\cdot)^2/2}$     & 2   & 8.195e-02  & 2.160e-01     \\\hline
		New subdivision scheme & $1-|\cdot|$               & 2   & 2.623e-01  & 5.142e-01 \\
		New subdivision scheme & $e^{-(2.5\cdot)^2/2}$     & 2   & 2.439e-01  & 4.841e-01 \\\hline\hline
		TPBS           &                           &     & 9.199e-02  & 1.766e-01     \\ \hline
		Initial data           &                           &     & 1.929e-01  & 6.464e-01     \\
		\hline
	\end{tabular}
	\caption{Approximation errors for different methods applied to the same noisy data set. }
	\label{tab:errors}
\end{table}

\begin{figure}[b]
	\begin{center}
			\includegraphics[width=0.28\textwidth,clip,trim={290 70 270 50}]{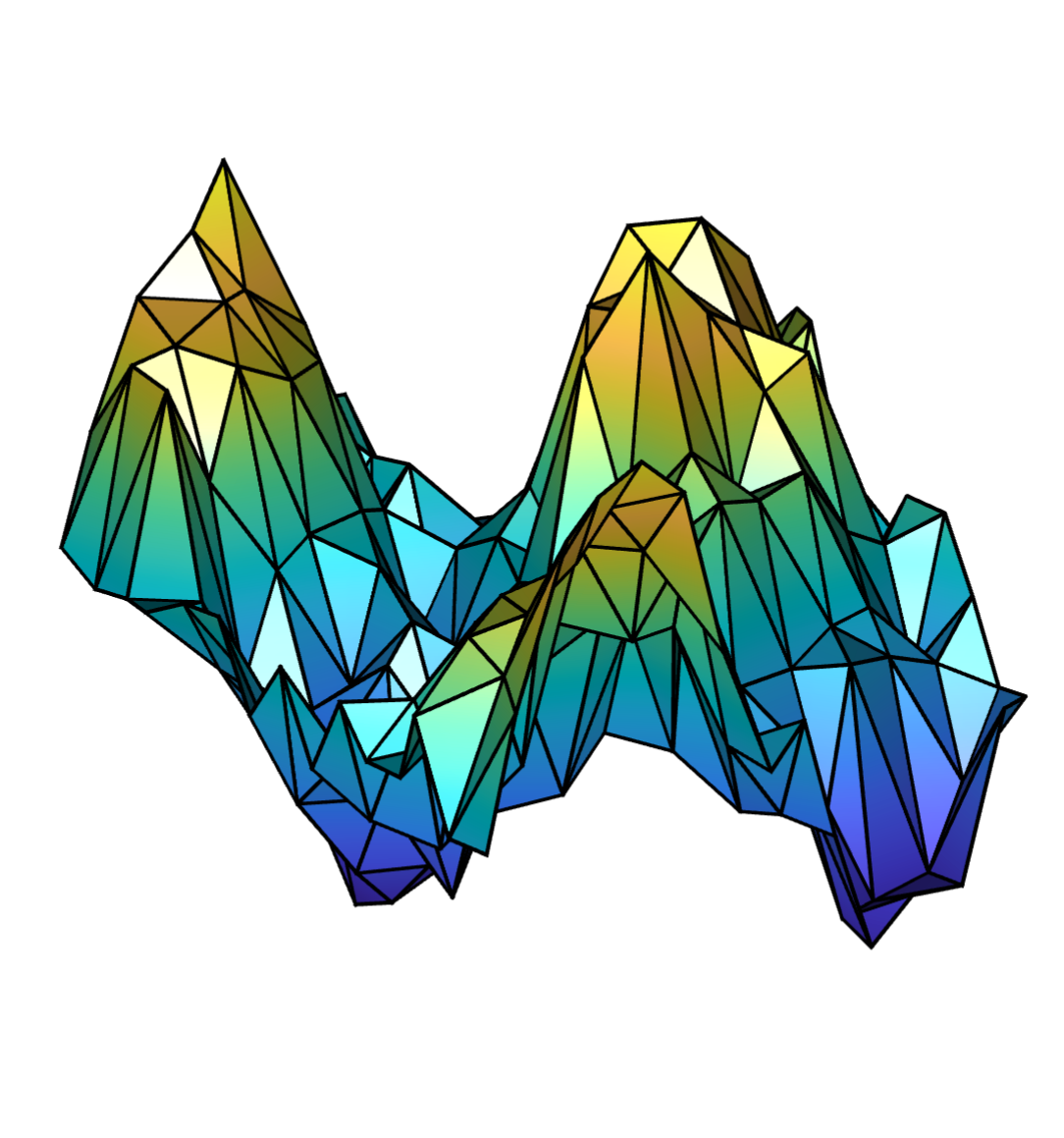}
	\end{center}
	{\caption{Data set in $[-3.2,3.2]^2$.}
	\label{figuraDATA}}
\end{figure}

\begin{figure}[b]
	\begin{center}
		\begin{tabular}{ccc}
			\includegraphics[width=0.28\textwidth,clip,trim={290 70 270 50}]{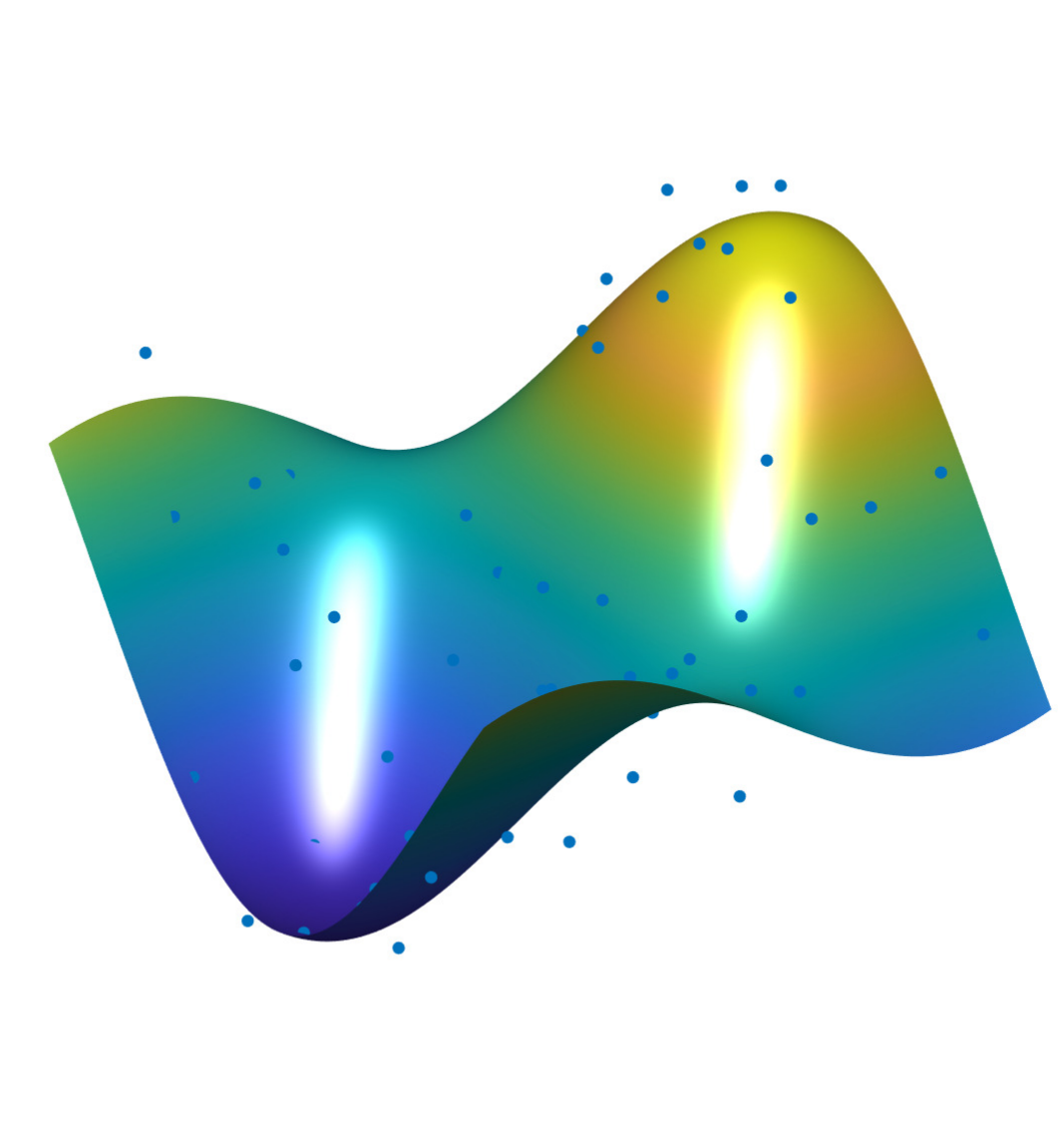}
			& \includegraphics[width=0.28\textwidth,clip,trim={290 70 270 50}]{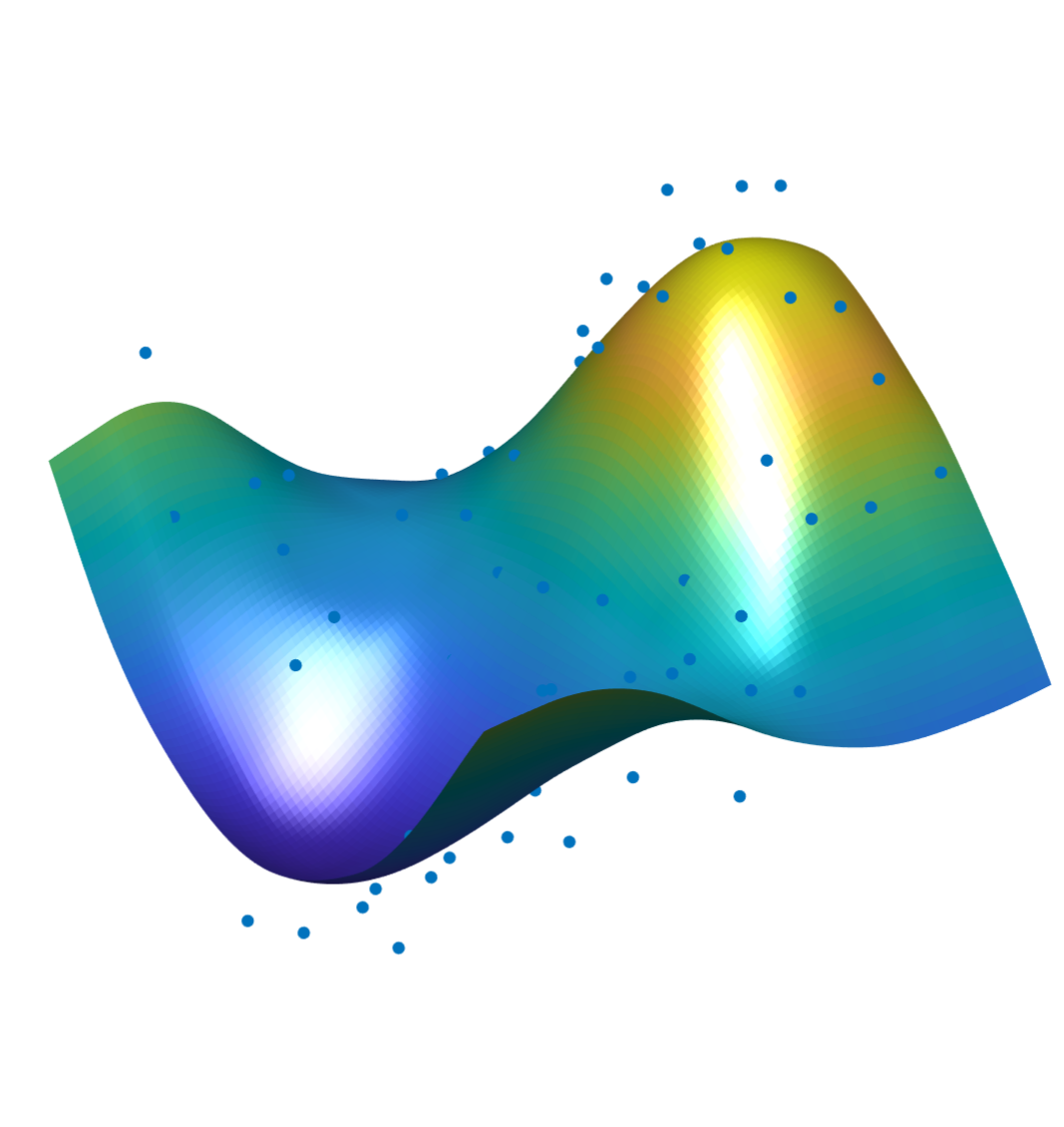}
			& \includegraphics[width=0.28\textwidth,clip,trim={290 70 270 50}]{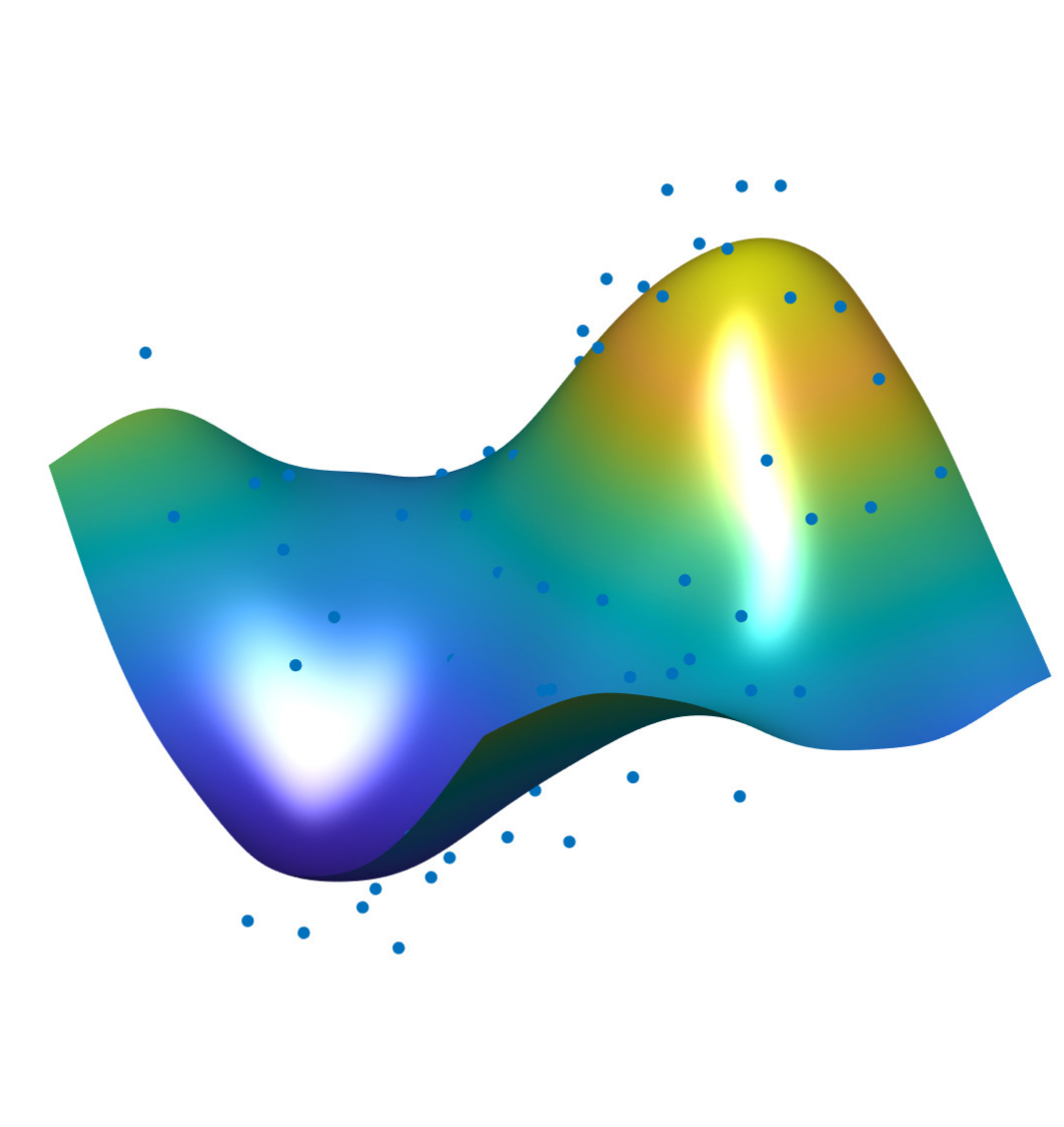} \\
			(a) \hbox{\tiny Function $f$ \& noisy data}& (b) \hbox{\tiny MLS: $W(\cdot)=1-\cdot,\ L=1$}
			& (c) \hbox{\tiny Shepard method $W(\cdot)=e^{-(2.5\cdot)^2/2}$, $L=1$} \\
			\includegraphics[width=0.28\textwidth,clip,trim={290 70 270 50}]{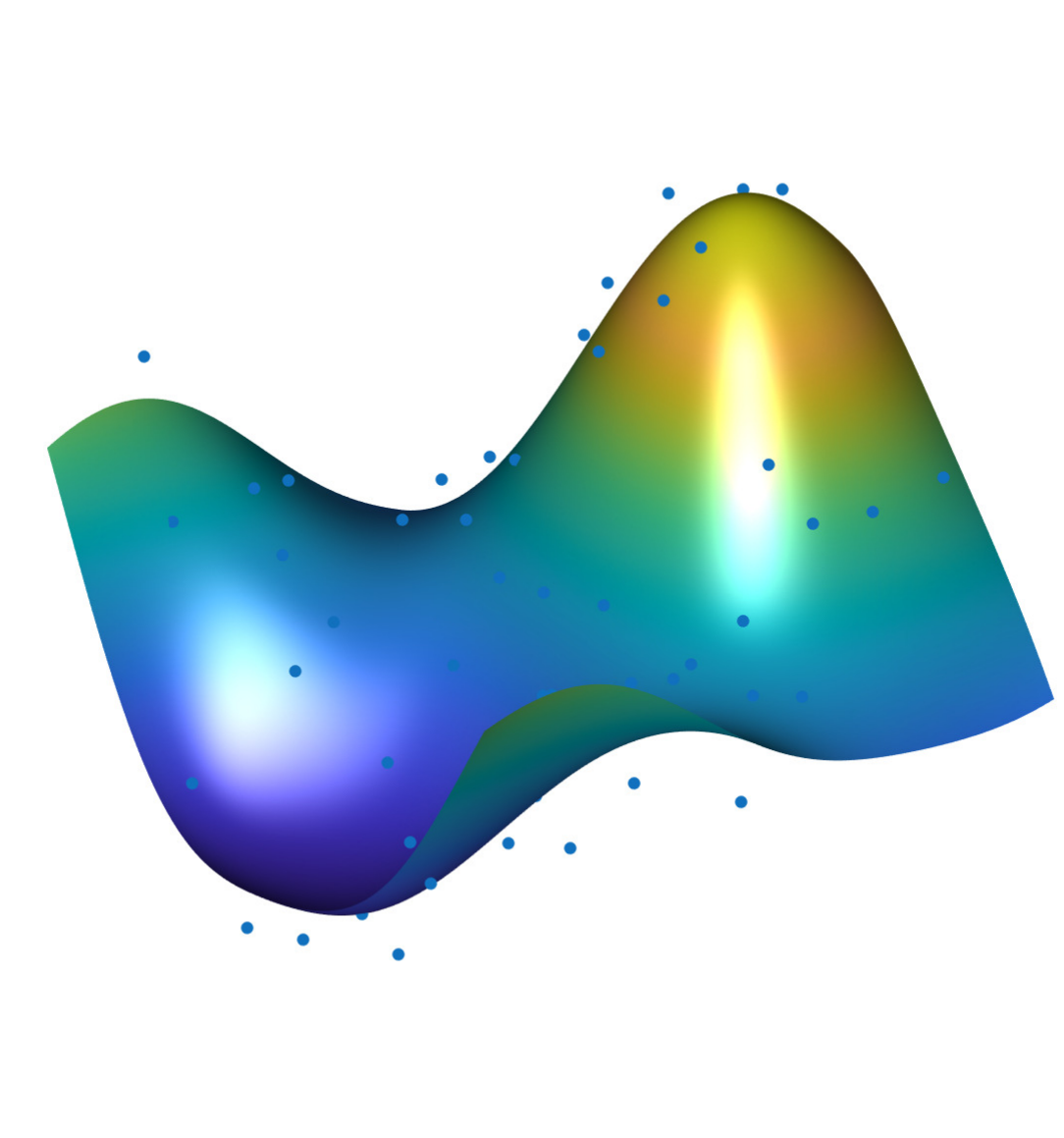}
			& \includegraphics[width=0.28\textwidth,clip,trim={290 70 270 50}]{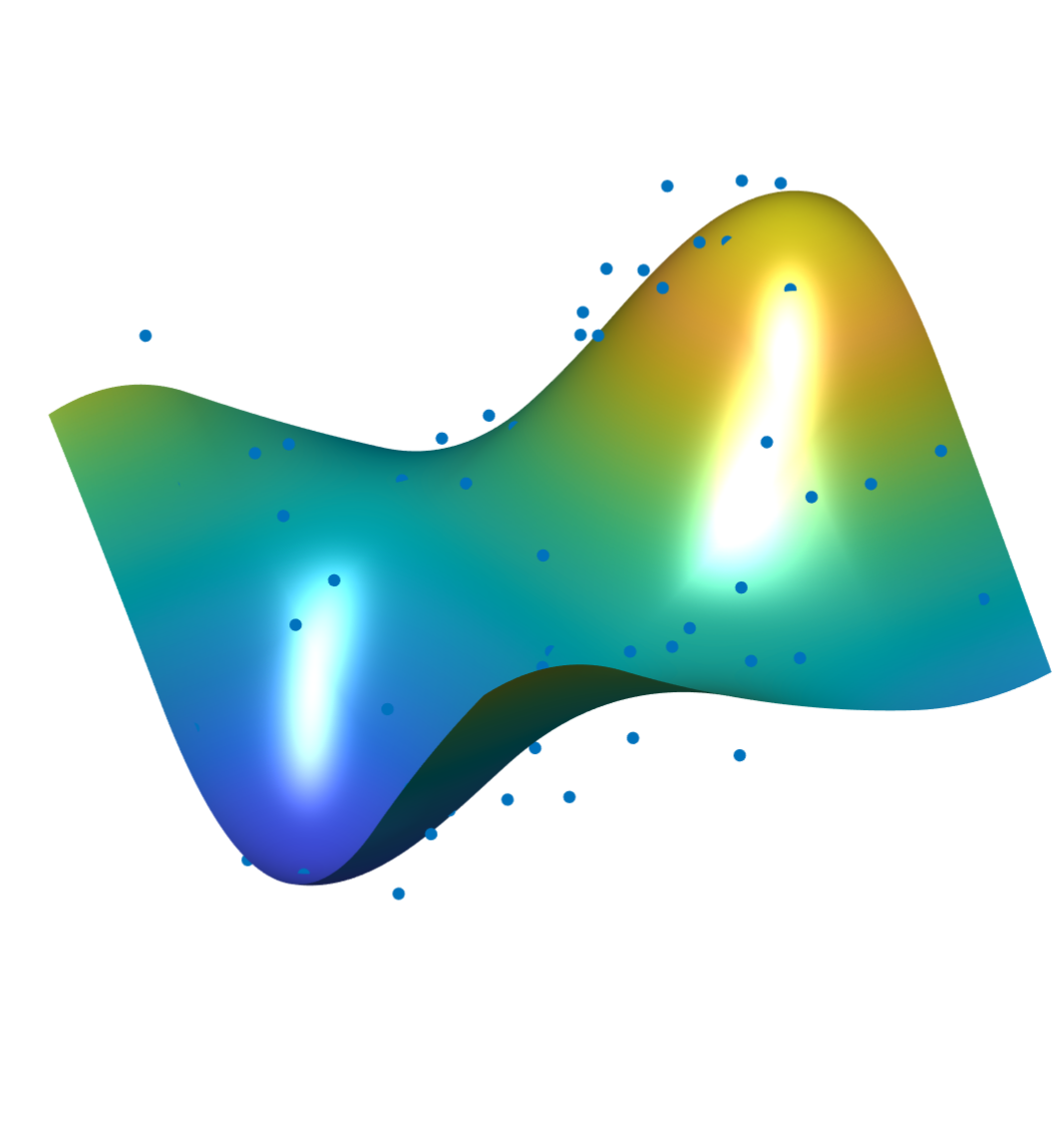}
			& \includegraphics[width=0.28\textwidth,clip,trim={400 70 360 50}]{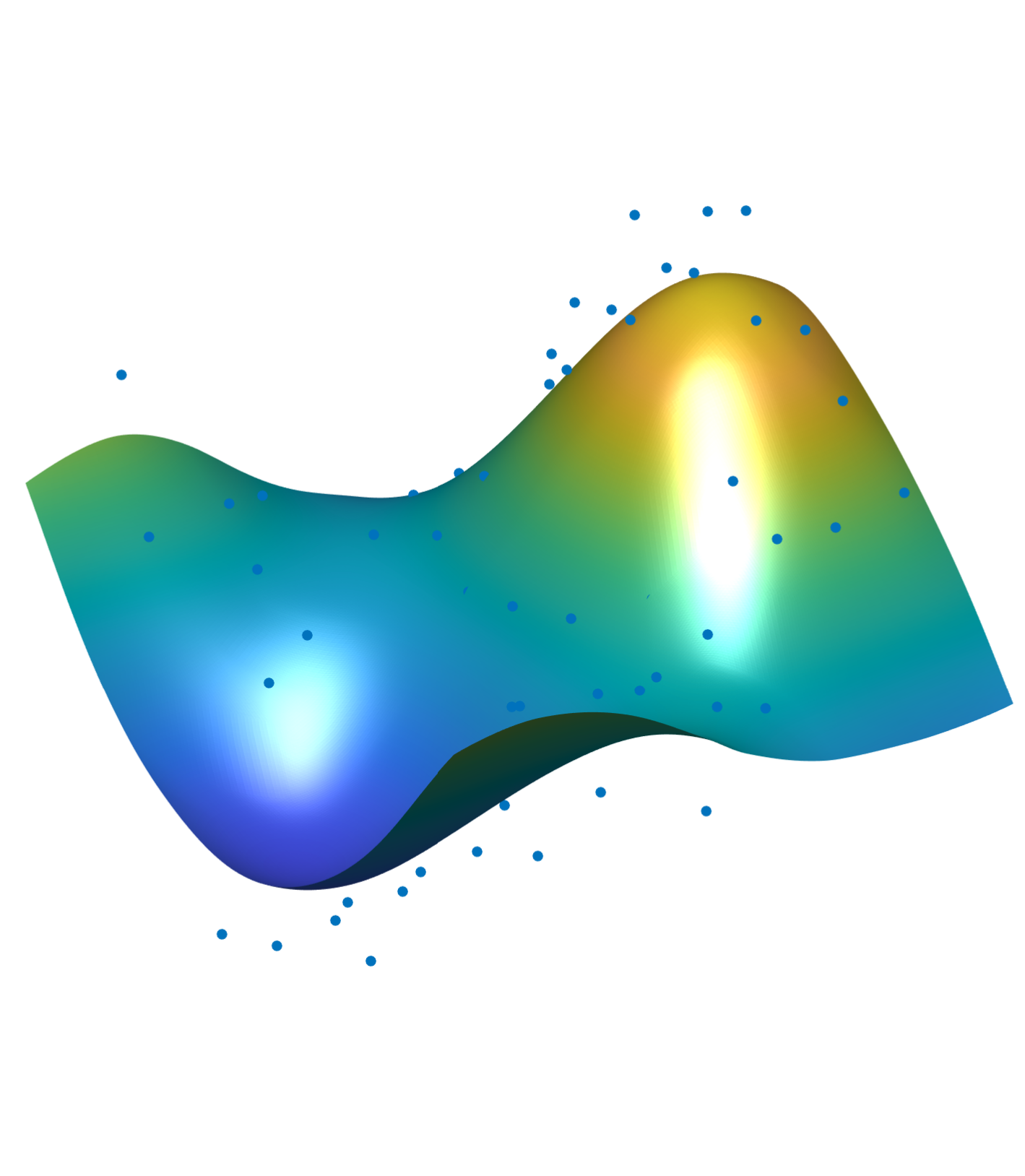}\\
			(d) \hbox{\tiny RBF with $W(\cdot)=e^{-(2.5\cdot)^2/2}$, $L=2$}& (e)\hbox{\tiny B-Splines method} & (f) \hbox{\tiny Subdivision method: $W(\cdot)=1-\cdot,\ L=1$}\\
		\end{tabular}
	\end{center}
	\caption{Comparison of several different approximations models for the same set of noisy data in $[-2,2]^2$.
	\label{figuraA}}
\end{figure}

\subsection{The method for noisy geometric data} \label{sec:surface_subdivision}

This section is to discuss a way to apply the new subdivision scheme to general 3D data. Indeed, so far, we presented a subdivision scheme designed to refine noisy functional data $z^0_i = f(\bv^0_i) + \epsilon_i \in \R$, $i=1,\ldots,n$, associated with a known triangulation of vertices $\{\bv^0_i\}_{i=1}^{n}\subset \R^2$. Consequently, the resulting limit surfaces are always graphs of functions. However, in some applications, the input data are given as $\bz_i \in\R^3$, $i=1,\ldots,n$, connected in a way to form a triangulation in $\R^3$, and  describing a general surfaces in $\R^3$, which may include a closed surface. In the latter case, only a local parametrization of the surface is guaranteed and the proposed subdivision schemes cannot be directly applied as determining a corresponding $\{\bv^0_i\}_{i=1}^{n}$ is, in general, not feasible.

As an initial attempt to develop a subdivision scheme suitable for general 3D data (always connected by a triangulation), we propose a method based on the use of local parametrizations, inspired by \cite{Levin04}. A theoretical analysis of this approach is beyond the scope of this work; instead, we provide a practical implementation to demonstrate its potential. The code for reproducing our results is included in the Reproducibility section.

To compute each new vertex $\bz^{k+1}_\ell$, we define an associated stencil
\begin{equation*}
	\cB^{k+1,\ell} := \{ j \in \{1,2,\ldots,N^k\} \ : \ \ \|\bz^{k+1}_\ell - \bz^{k}_j\| < L\},
\end{equation*}
and establish a local reference system $\mathcal{R}_{\ell} = \{O_{\ell}, \{ b^{1}_{\ell},b^{2}_{\ell},b^{3}_{\ell} \} \} $, where $O_{\ell}\in\R^{3}$ serves as the origin, and $\{b^{1}_{\ell},b^{2}_{\ell},b^{3}_{\ell}\}$ as a basis of $ \R^{3} $. Using the local coordinates of $\{\bz^{k}_j\}_{j\in \cB^{k+1,\ell}}$ relative to $\mathcal{R}_{\ell}$, the first two coordinates are interpreted as vertex locations in $\R^2$, while the third coordinate as noisy data in $\R$. The proposed linear subdivision rule is then applied, yielding the new vertex $\bz^{k+1}_\ell$ expressed in the local reference system $\mathcal{R}_{\ell}$.

Since the data describe a surface, an appropriate choice of the reference system includes: $O_{\ell}$, an estimate of the location where the new vertex will be positioned; $b^{1}_{\ell},b^{2}_{\ell}$, a basis of an approximated tangent plane; and $b^{3}_{\ell} = b^{1}_{\ell} \times b^{2}_{\ell}$, which approximates the surface normal.

A practical selection for $O_{\ell}$ could be either the value replaced by $\bz^{k+1}_\ell$ or the mid-point of the edge where it is inserted. The vectors $b^{1}_{\ell},b^{2}_{\ell}$ can be determined as the first two \emph{principal directions} of the set of vectors \( \{\bz^{k}_j -  O_{\ell}\}_{j\in \cB^{k+1,\ell}}\), which can be computed, for example, via singular value decomposition (SVD).

 { Figure \ref{fig:sphere} illustrates an example of data refinement using this method. It is obtained using
$162$ vertices on the unit sphere, generated using "spheretri" library (see Reproducibility section for more details).
A normal random noise is added on the normal direction of the sphere with standard deviation equal to $0.05$. The subdivision steps are set to $5$ while $L=0.6$ and $W(\cdot) = 1 - \cdot$.
To run the $5$ steps, without any attention to optimize the implementation of the triangulation and the identification of the $L$-ball throw the triangulation, the run time is 94 seconds on a Ubuntu 24.04.3 LTS, ASUS PRIME H410M-E, Intel® Core™ i7-10700 x 16, 32,0 GiB RAM. Optimizing the mentioned steps would certainly reduce the run time.}



\begin{figure}[!h]
	\begin{center}
		\begin{tabular}{cc}
			\includegraphics[width = 0.25 \textwidth,clip,trim = {260 110 230 50}]{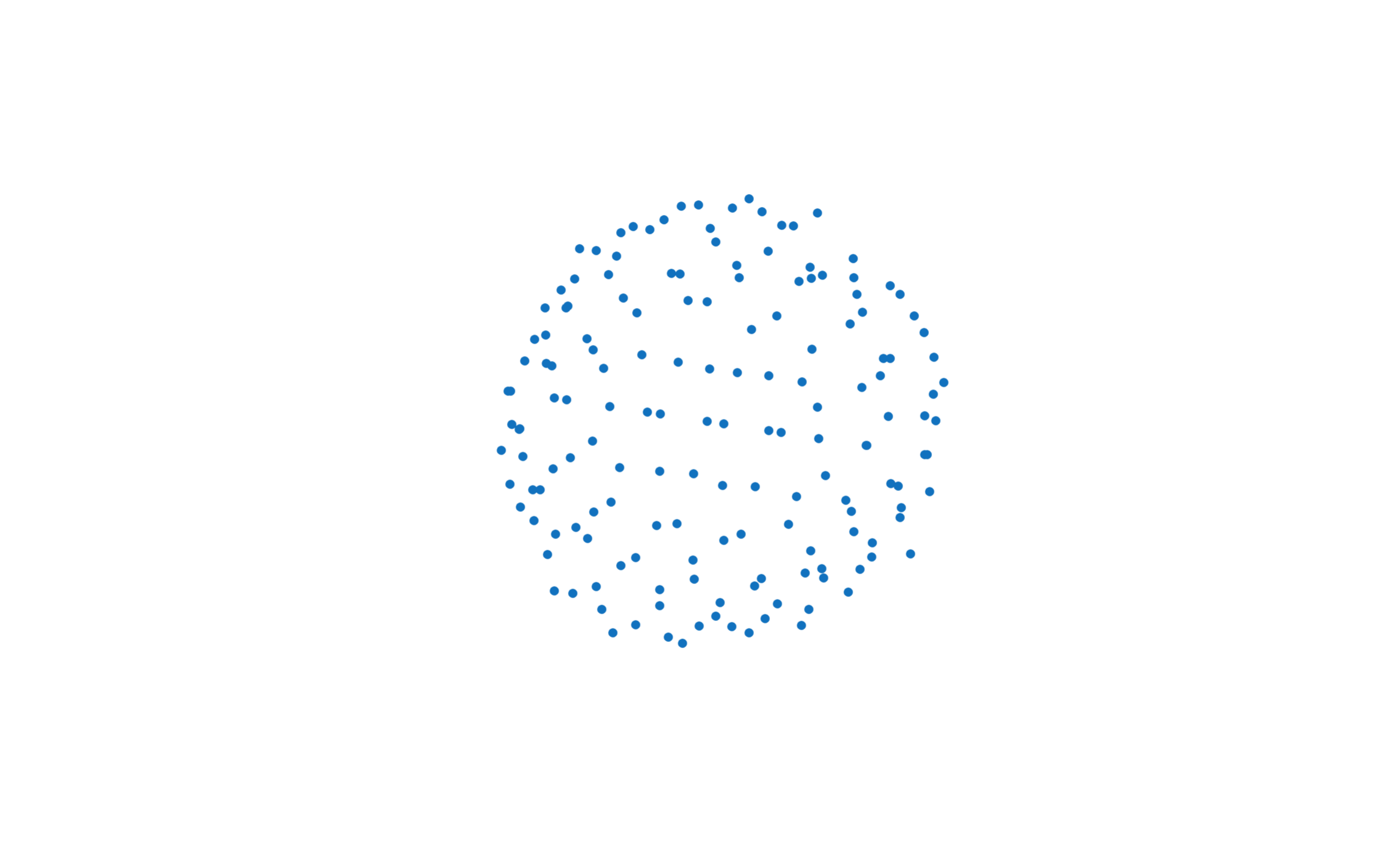} &
			\includegraphics[width = 0.25 \textwidth,clip,trim = {260 110 230 50}]{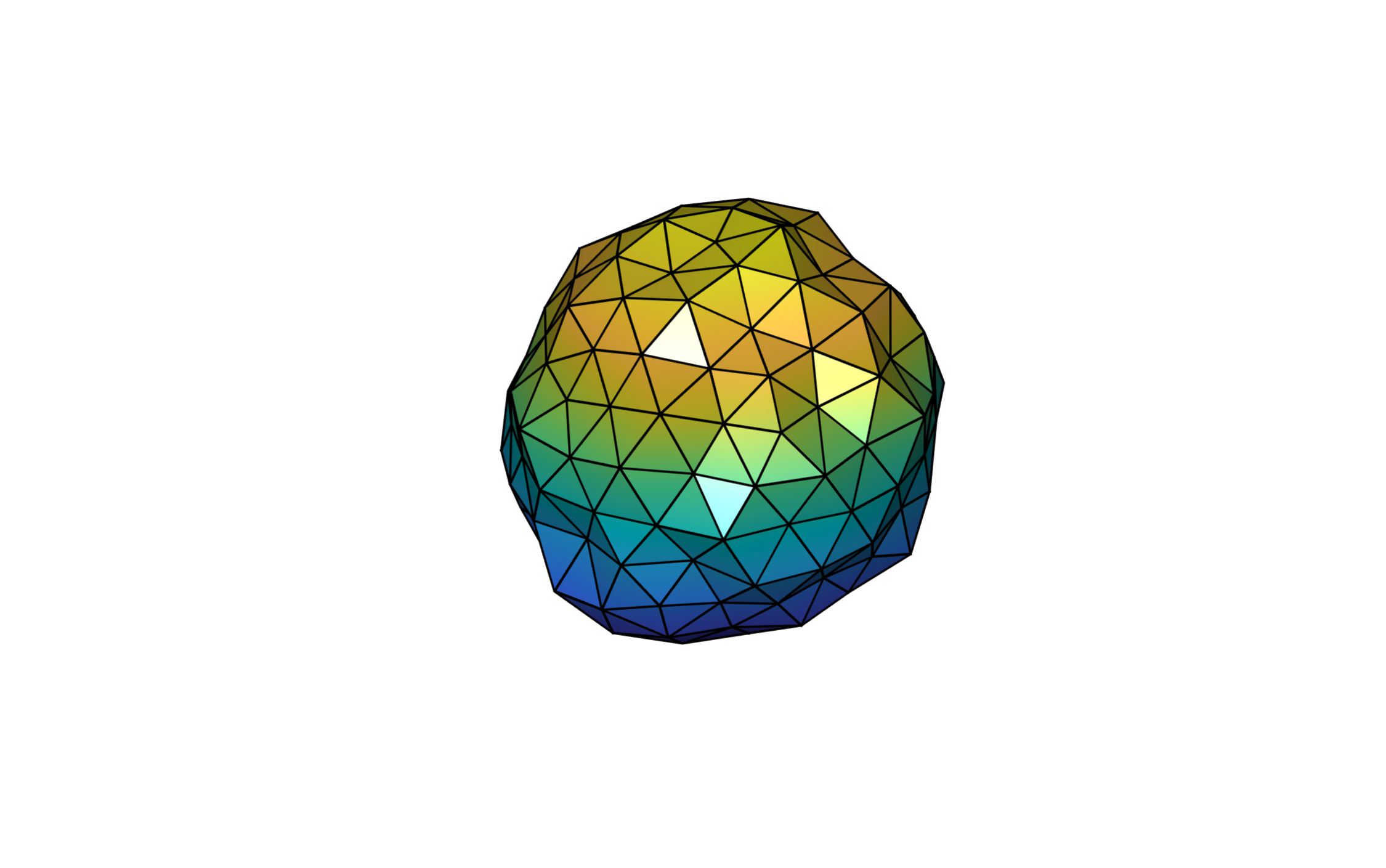}\\
			\includegraphics[width = 0.25 \textwidth,clip,trim = {260 110 230 50}]{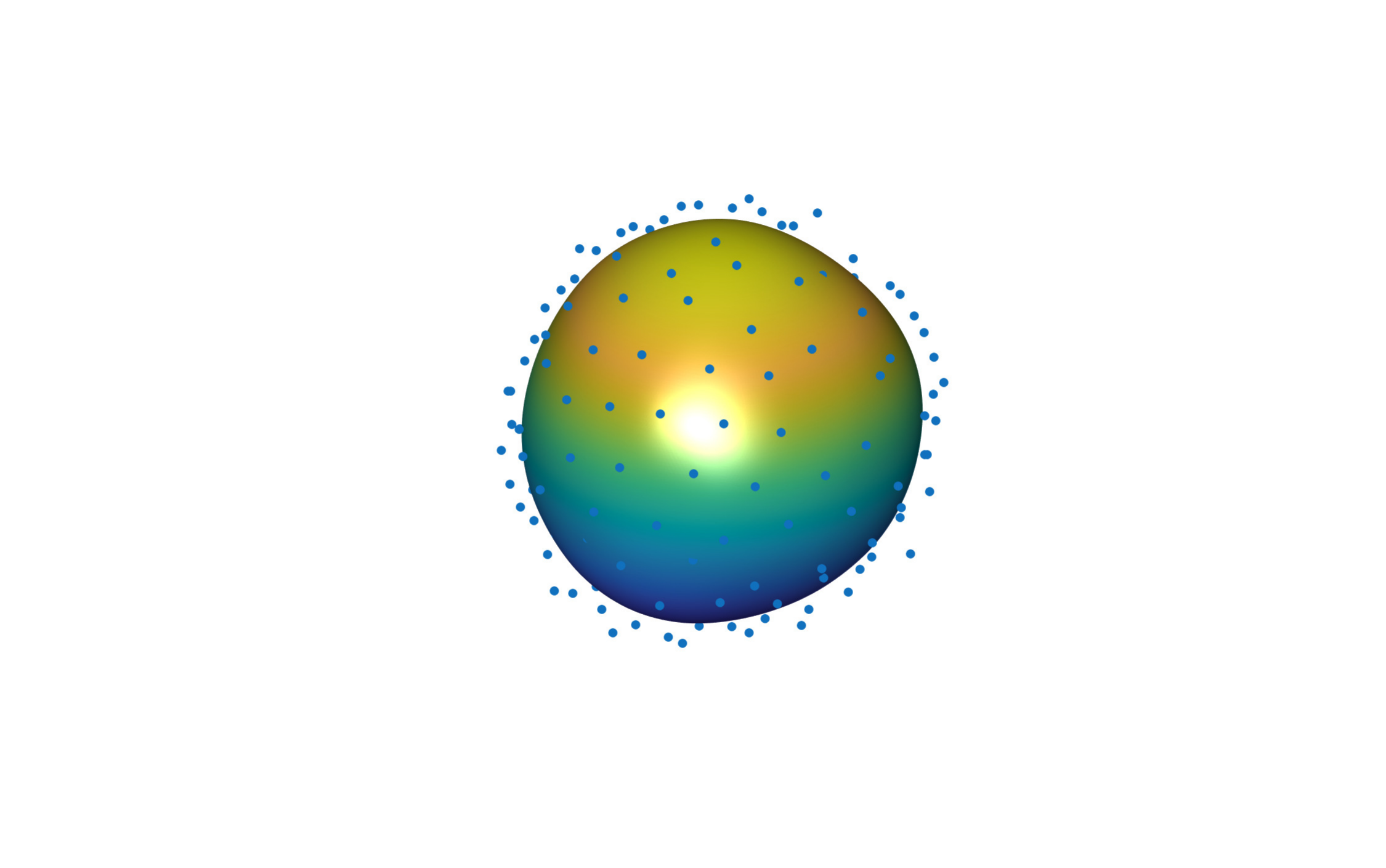}&
			\includegraphics[width = 0.25 \textwidth,clip,trim = {260 110 230 50}]{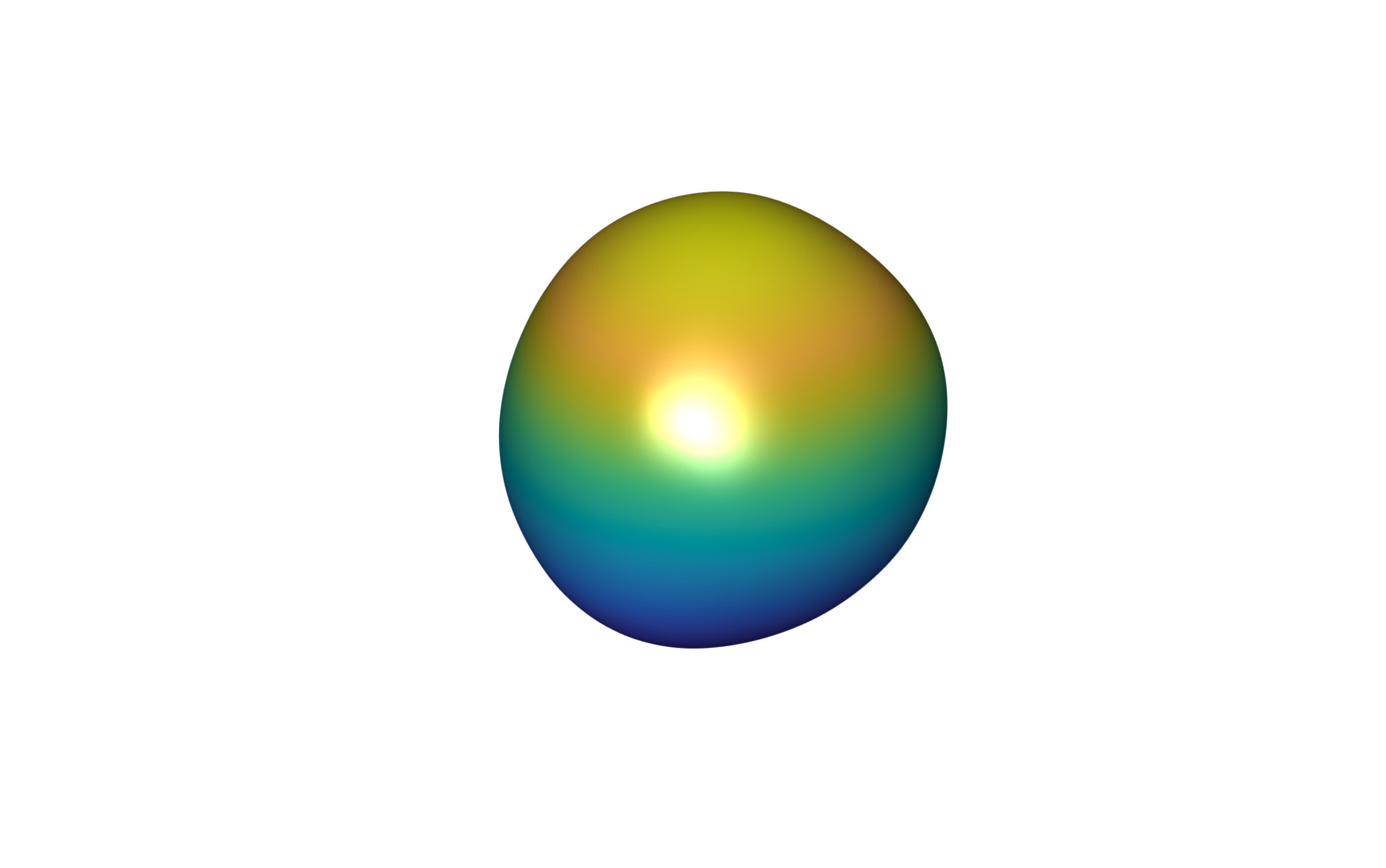}
		\end{tabular}
	\end{center}
	\caption{Top, the initial noisy triangulation is displayed on the right, showing both vertices and edges, while only the initial vertices appear on the left. Bottom, the left graphic shows the computed result overlaid with the initial vertices, and the right panel displays the result alone.}
	\label{fig:sphere}
\end{figure}

\section{Conclusion and future work}\label{sec:Conclusion}

This paper presents and analyses a new family of linear subdivision schemes to refine noisy data given on triangular meshes based on weighted least squares polynomials. The refinement rules are formulated in a way that are applicable to any triangular grid, including finite grids or grids containing extraordinary vertices, but they are geometry-dependent, which may result in non-uniform schemes. We prove linear polynomial reproduction, approximation order two, and denoising capabilities. Even though we cannot provide convergence analysis in general, we give some guidance to prove it and we do it for uniform grids.

Numerical experiments show that the schemes performance is comparable to advanced local linear regression methods and that their subdivision nature makes them suitable to deal with noisy geometric data. Moreover, this type of subdivision schemes could be applied within a multiresolution decomposition context in order to denoise data. Starting at the finest level with a dense set of noisy vertices, the multiresolution decomposition could be computed and thresholding criteria applied to remove the noise. The latter topics are left for future investigation as well as the investigation of some theoretical aspects of the new family of schemes like the computational cost, the convergence and the regularity for non-regular triangulation.

\section*{Acknowledgements}

The first author is members of the INdAM research group GNCS, which has partially supported this work accomplished within RITA and UMI-TAA groups of which she is a member, as well as through the ``Talent Attraction'' invited-researcher stays programme of the Vice-Principal for Research at the Universitat de València.

This research has been supported by Spanish MINECO project PID2023-146836NB-I00, funded by \\ MCIN/AEI/10.13039/501100011033, and GVA project CIAICO/2024/089.

\section*{Reproducibility}

The MATLAB code necessary to obtain the figures on this paper can be found in the repositories \url{https://github.com/serlou/WLS-subdivision-for-triangulations} and \url{https://github.com/serlou/triangulations-workshop}. The first one is more dedicated to the implementation of the subdivision schemes, while the second one contains the code to generate the triangulations and the initial data.

To generate the initial data to be refined in Figure \ref{fig:sphere}, we used the code in the repository \url{https://github.com/pgagarinov/spheretri}.


\bibliographystyle{plain}

\end{document}